\theoremstyle{plain}
\newtheorem{thm}{Theorem}[section]
\newtheorem{prop}[thm]{Proposition}
\newtheorem{lem}[thm]{Lemma}
\theoremstyle{definition}
\newtheorem{defn}[thm]{Definition}
\def\R{\mathbb{R}}
\def\P{\mathbb{P}}
\def\E{\mathbb{E}}
\newcommand{\be}{\begin{equation}}
\newcommand{\ee}{\end{equation}}
\newcommand{\bea}{\begin{eqnarray}}
\newcommand{\eea}{\end{eqnarray}}
\newcommand{\beann}{\begin{eqnarray*}}
\newcommand{\eeann}{\end{eqnarray*}}
\newcommand{\benn}{\begin{equation*}}
\newcommand{\eenn}{\end{equation*}}
\def\ra{\rightarrow}
\def\I{\infty}
\def\I{\infty}
\newcommand{\cA}{{\mathcal A}}  
\newcommand{\cB}{{\mathcal B}}  
\newcommand{\cC}{{\mathcal C}}  
\newcommand{\cD}{{\mathcal D}}  
\newcommand{\cF}{{\mathcal F}}  
\newcommand{\cO}{{\mathcal O}}  
\begin{document}

\author{Christian Kuehn\footnotemark[1]~\footnotemark[2]}

\renewcommand{\thefootnote}{\fnsymbol{footnote}}
\footnotetext[1]{%
Max Planck Institute,
Physics of Complex Systems, 
Dresden, 01187, Germany.}
\footnotetext[2]{%
Institute for Analysis and Scientific Computing, 
Vienna University of Technology, 
Vienna, 1040, Austria.} 
\renewcommand{\thefootnote}{\arabic{footnote}}
 
\title{A Mathematical Framework for Critical Transitions:\\ Normal Forms, Variance and Applications}

\maketitle

\begin{abstract}
Critical transitions occur in a wide variety of applications including mathematical biology, climate change, human physiology and economics. Therefore it is highly desirable to find early-warning signs. We show that it is possible to classify critical transitions by using bifurcation theory and normal forms in the singular limit. Based on this elementary classification, we analyze stochastic fluctuations and calculate scaling laws of the variance of stochastic sample paths near critical transitions for fast subsystem bifurcations up to codimension two. The theory is applied to several models: the Stommel-Cessi box model for the thermohaline circulation from geoscience, an epidemic-spreading model on an adaptive network, an activator-inhibitor switch from systems biology, a predator-prey system from ecology and to the Euler buckling problem from classical mechanics. For the Stommel-Cessi model we compare different detrending techniques to calculate early-warning signs. In the epidemics model we show that link densities could be better variables for prediction than population densities. The activator-inhibitor switch demonstrates effects in three time-scale systems and points out that excitable cells and molecular units have information for subthreshold prediction. In the predator-prey model explosive population growth near a codimension two bifurcation is investigated and we show that early-warnings from normal forms can be misleading in this context. In the biomechanical model we demonstrate that early-warning signs for buckling depend crucially on the control strategy near the instability which illustrates the effect of multiplicative noise.    
\end{abstract}

{\bf Keywords:} Critical transition, tipping point, fast-slow system, invariant manifold, stochastic differential equation, multiple time scales, moment estimates, asymptotic analysis, Laplace integral, thermohaline circulation, activator-inhibitor system, adaptive networks, SIS-epidemics, Bazykin predator-prey model, Euler buckling.

\section{Introduction}  

A critical transition (or tipping point) is a rapid sudden change of a time-dependent system. For the introduction we shall rely on this intuitive notion; the mathematical development starts in Section \ref{sec:fast_slow}. Typical examples of critical transitions are drastic changes in the climate \cite{Lentonetal,Alleyetal}, in ecological systems \cite{Clarketal,Carpenteretal}, in medical conditions \cite{ElgerLehnertz,Venegasetal} or in economics \cite{HongStein,HuangWang}. Reviews of recent progress to develop early-warning signals for these critical transitions from an applied perspective can be found in \cite{Schefferetal,Scheffer}. The goal of a mathematical theory should be to provide qualitative and quantitative conditions to check whether a drastic change in a dynamical system can be predicted before it occurs; note that it is obvious that certain transitions are very difficult to predict, for example, due to large noise effects \cite{DitlevsenJohnsen} or non-smooth transitions \cite{HastingsWysham}. 

A basic assumption in many applications is that the underlying process is deterministic but is subject to small random fluctuations. Furthermore, one often assumes that the change occurs rapidly in comparison to the current state dynamics. Elementary remarks how the mathematical theory of stochastic fast-slow systems can be used to encapsulate these hypotheses can be found in \cite{KuehnCT1}. In particular, several one-parameter normal form models were studied and a more detailed link between rising variance \cite{CarpenterBrock}, rising autocorrelation \cite{Dakosetal}, time series analysis and dynamical models was pointed out. For additional references on critical transitions we refer the reader to \cite{Schefferetal} and \cite{KuehnCT1}.\\

We outline our results without stating detailed technical assumptions. It will be assumed that the main dynamics near a critical transition is governed by an ordinary differential equation (ODE). A classification which bifurcations are critical transitions based on a definition suggested in \cite{KuehnCT1} is explained. In a suitable singular limit this classification is a simple exercise dealing with all bifurcations up to codimension two. Some of the details for this classification are explained since one has to determine, at least once, which conditions on the fast and slow subsystems of a multi-scale system near higher-codimension bifurcations lead to trajectories that resemble critical transitions observed in applications. To model the random fluctuations stochastic differential equations (SDEs) with sufficiently small white noise are used. We calculate asymptotic formulas for all possible covariance matrices associated to sample paths approaching a critical transition. The setup for the calculations is straighforward and is based on normal form assumptions, approximation by Ornstein-Uhlenbeck processes and the solution of a few algebraic equations. An error estimate for the asymptotic expansions is proven for the fold bifurcation by analyzing stochastic difference processes and applying elementary moment estimates, thereby avoiding more advanced techniques \cite{BerglundGentz} for a certain regime. The focus on the fold bifurcation is justified as it is one of the most frequently encountered critical transitions \cite{ThompsonSieber2,GuttalJayaprakash2}. For the same reason, we also provide higher-order asymptotic expansions for the variance as doubly singular limit expansions with small noise and small time scale separation for the approach towards a fold point. 

Then we use the mathematical results in a wide variety of models. For each application the theoretical predictions are compared with numerical results. We briefly describe which important results are obtained within the examples. In a box-model of atmospheric and ocean circulation we test different approaches to estimate the variance from a given time series and suggest a new method motived by fast-slow systems. In a discrete epidemic spreading model a moment expansion is used to simplify an adaptive network and to analyze the onset of an epidemic. It is shown that predictability in adaptive networks can be improved by focusing on link dynamics instead of node dynamics. A model from systems biology is used to explain the effect of two critical transitions linked in a three-time scale systems. A predator-prey model illustrates the effect of multiple system parameters which can potentially hide early-warning signals that are usually expected to occur in ecology. The last example treats buckling of a spring in the context of a biomechanics experiment. The model for this experiment shows how parameter-dependent non-additive noise influences, and systematically changes, observed early-warning signs. The examples from epidemics, biomechanics and systems biology also seem to be among the first (or even the first) ones where early-warning signs for critical transitions have been applied in the respective fields.\\

In summary, our theoretical results combine well-known elementary mathematical tools from bifurcation theory, fast-slow systems, real analysis, stochastic differential equations, probability, asymptotic analysis, numerical continuation/integration and time series analysis to systematize some of the aspects of critical transitions. In this way, we make progress towards the major open question to develop a unified critical transitions theory \cite{Schefferetal}. Although no complicated technical steps are treated we hope that our work forms a starting point to motivate new mathematical insights into predictability for dynamical systems; see also Section \ref{sec:conclusions}. Our second contribution is to show that abstract critical transitions theory can yield very useful conclusions with immediate value for applications.\\

The paper is structured as follows. Section \ref{sec:fast_slow} describes the background from deterministic fast-slow systems. Section \ref{sec:nforms} contains the classification results. Section \ref{sec:SDE_fs} reviews theory for stochastic fast-slow systems based on which we prove the error estimate for asymptotic moment results near folds. In Section \ref{sec:variance} the leading-order asymptotic scaling laws for the covariance are obtained and in Section \ref{sec:fold_asymp} these results are refined for the fold. Section \ref{sec:applications} contains the five important examples. Section \ref{sec:conclusions} provides an outlook how the framework developed here could be extended.\\

\textit{Convention:} Whenever a citation with detailed page numbers at the beginning of a result (Theorem, Lemma, etc.) is given then the statement and proof can be found in the reference.

\section{Brief Review of Fast-Slow Systems}
\label{sec:fast_slow}

We recall the necessary definitions and results from multiple time scale dynamics \cite{Desrochesetal,Jones,MisRoz,Grasman} that are required to define critical transitions. A \texttt{fast-slow system} of (ODEs) is given by
\be
\label{eq:gen1}
\begin{array}{rcrcl}
\epsilon \frac{dx}{ds}&=&\epsilon \dot{x}&=&f(x,y,\epsilon),\\
\frac{dy}{ds}&=& \dot{y}&=& g(x,y,\epsilon),\\
\end{array}
\ee
where $0<\epsilon\ll1$, $x\in\R^m$ are \texttt{fast variables} and $y\in\R^n$ are \texttt{slow variables}. The maps $f:\R^{m+n+1}\ra \R^m$ and $g:\R^{m+n+1}\ra \R^n$ are assumed to be sufficiently smooth. If $f,g$ do not depend on $\epsilon$ we omit the $\epsilon$-argument and write {e.g.} $f(x,y)$ instead of $f(x,y,\epsilon)$. Changing in \eqref{eq:gen1} from the \texttt{slow time} $s$ to the \texttt{fast time} $t=s/\epsilon$ gives
\be
\label{eq:gen2}
\begin{array}{rcrcr}
\frac{dx}{dt}&=&x'&=&f(x,y,\epsilon),\\
\frac{dy}{dt}&=&y'&=& \epsilon g(x,y,\epsilon).\\
\end{array}
\ee
Henceforth $(\dot{~})$ will denote differentiation with respect to the slow time $s$ and prime differentiation with respect to the fast time $t$. The \texttt{singular limit} $\epsilon\ra 0$ in \eqref{eq:gen1} yields the \texttt{slow subsystem}
\be
\label{eq:sss}
\begin{array}{rcl}
0&=& f(x,y,0),\\
\dot{y}&=&g(x,y,0),\\
\end{array}
\ee
which is a \texttt{differential-algebraic equation} restricted to the \texttt{critical manifold} $C_0:=\{(x,y)\in\R^{m+n}:f(x,y,0)=0\}$. The \texttt{fast subsystem} is obtained as the singular limit of \eqref{eq:gen2}
\be
\label{eq:fss}
\begin{array}{rcl}
x'&=& f(x,y,0),\\
y'&=& 0,\\
\end{array}
\ee
where the slow variables can be viewed as parameters. The flows generated by \eqref{eq:sss} and \eqref{eq:fss} are called the \texttt{slow flow} and the \texttt{fast flow} respectively. A point $p\in C_0$ is an equilibrium point of the fast subsystem. The critical manifold is \texttt{normally hyperbolic} at $p\in C_0$ if the $m\times m$ matrix $D_xf(p)$ has no eigenvalues with zero real parts. In this case, the implicit function theorem provides a map $h_0:\R^n\ra \R^m$ that describes $C_0$, locally near $p$, as a graph $C_0=\{(x,y)\in\R^{m+n}:x=h_0(y)\}$. Then the slow subsystem \eqref{eq:sss} can be written more concisely as $\dot{y}=g(h_0(y),y)$. If all eigenvalues of $D_xf(p)$ are negative (positive) then $C_0$ is \texttt{attracting} (\texttt{repelling}); other normally hyperbolic critical manifolds are of \texttt{saddle-type}. Observe that $C_0$ is attracting at $p$ if and only if the fast subsystem has a stable hyperbolic equilibrium at $p$. Fenichel's Theorem provides a complete description of the dynamics for normally hyperbolic invariant manifolds for sufficiently smooth vector fields  $(f,g)$. To state the result, we recall that the \texttt{Hausdorff distance} between two sets $V,W\subset \R^{m+n}$ is given by
\benn
d_H(V,W)=\max\left\{\sup_{v\in V} \inf_{w\in W}\|v-w\|,\sup_{w\in w} \inf_{v\in V}\|v-w\|\right\}
\eenn 
where $\|\cdot\|$ denotes the Euclidean norm.

\begin{thm}[\texttt{Fenichel's Theorem}~\cite{Fenichel4}]
\label{thm:fenichel1}
Suppose $S = S_0$ is a compact normally hyperbolic submanifold (possibly with boundary) of the critical manifold $C_0$. Then for $\epsilon > 0$ sufficiently small there exists a locally invariant manifold $S_\epsilon$ diffeomorphic to $S_0$. $S_\epsilon$ has a Hausdorff distance of $\cO(\epsilon)$ from $S_0$ and the flow on $S_\epsilon$ converges to the slow flow as $\epsilon \to 0$. $S_\epsilon$ is normally hyperbolic and has the same stability properties with respect to the fast variables as $S_0$ (attracting, repelling or saddle type). 
\end{thm}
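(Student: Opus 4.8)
The plan is to reduce the statement to an application of the center manifold theorem by passing to the \emph{extended} (or suspended) system. First I would append the trivial equation $\epsilon'=0$ to the fast-time system \eqref{eq:gen2}, obtaining an autonomous vector field $F$ on $\R^{m+n+1}$ with coordinates $(x,y,\epsilon)$ whose components are $f(x,y,\epsilon)$, $\epsilon g(x,y,\epsilon)$ and $0$. The point of this device is that $\epsilon$ becomes a genuine phase-space variable, so ``persistence for small $\epsilon$'' is recast as ``local invariance of a manifold of the flow of $F$'', handling the perturbation parameter on the same footing as the dynamical variables. In this extended phase space the set $M_0 := S_0 \times \{0\}$ consists entirely of equilibria: on $\{\epsilon=0\}$ one has $y'=0$, and since $S_0\subseteq C_0$ also $x'=f(x,y,0)=0$.

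Next I would analyze the linearization $DF$ at a point $p\in M_0$. In the block decomposition induced by the splitting of $x$ against $(y,\epsilon)$ the Jacobian is block upper triangular: the lower-left blocks vanish because $\partial_x(\epsilon g)=\epsilon D_x g$ and $\partial_y(\epsilon g)=\epsilon D_y g$ are zero at $\epsilon=0$, and the $\epsilon'=0$ row is identically zero. Hence the spectrum of $DF(p)$ is the spectrum of the fast linearization $D_xf(p)$ together with $n+1$ zero eigenvalues from the nilpotent $(y,\epsilon)$-block. Normal hyperbolicity of $S_0$ says precisely that the $m$ eigenvalues of $D_xf(p)$ avoid the imaginary axis, so the center subspace at $p$ is exactly $(n+1)$-dimensional: it is spanned by the $n$ directions tangent to $S_0$ together with one further direction transverse to $\{\epsilon=0\}$, while the remaining $m$ directions split into a stable and an unstable part according to the signs of the real parts.

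The key step is then to invoke the center manifold theorem for $F$ at $M_0$. Because $S_0$ is compact I would first cut off $F$ outside a neighborhood of $M_0$ so that a global version applies; this yields a locally invariant $C^r$ manifold $\widetilde{M}$ of dimension $n+1$, containing $M_0$, tangent along $M_0$ to the center subspace, and equipped with stable and unstable manifolds $W^s(\widetilde M)$, $W^u(\widetilde M)$ of the dimensions dictated by the splitting above. Since $\epsilon'=0$, the level sets $\{\epsilon=\text{const}\}$ are invariant, so for each fixed small $\epsilon>0$ the slice $S_\epsilon := \widetilde M\cap\{\epsilon=\text{const}\}$ is an $n$-dimensional locally invariant manifold of \eqref{eq:gen2}, diffeomorphic to $S_0$ by smoothness of $\widetilde M$. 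Tangency to the center subspace at $\epsilon=0$ forces $S_\epsilon$ to be a graph $x=h_\epsilon(y)$ with $h_\epsilon = h_0 + \cO(\epsilon)$, giving $d_H(S_\epsilon,S_0)=\cO(\epsilon)$; restricting $F$ to $\widetilde M$ and letting $\epsilon\ra 0$ recovers the slow flow $\dot y = g(h_0(y),y)$, and the inherited splitting of $W^s,W^u$ supplies the claimed attracting/repelling/saddle type.

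I expect the main obstacle to lie in the delicacy of center-manifold theory rather than in the formal computation. Center manifolds need not be unique, and their domain of validity is entangled with their smoothness class (a $C^r$ manifold may exist only on an $r$-dependent neighborhood), so care is needed to produce a single manifold on a fixed neighborhood and to make the dependence on $\epsilon$ genuinely smooth up to $\epsilon=0$; the compactness of $S_0$ and the cut-off are what make this controllable. The boundary is the reason the conclusion is only \emph{local} invariance, since trajectories may exit $S_\epsilon$ through $\partial S_\epsilon$. An alternative that bypasses the non-uniqueness of center manifolds is Fenichel's original Hadamard graph-transform construction: set up the transform on graphs $x=h(y,\epsilon)$ and prove it is a contraction using the spectral gap from normal hyperbolicity. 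I would keep this in reserve as the more self-contained route should sharper control on $\widetilde M$ be required.
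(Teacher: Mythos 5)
Your sketch is correct in outline, but note that the paper itself offers no proof of this theorem: by the citation convention stated in its introduction, the statement and proof live in \cite{Fenichel4}, where Fenichel argues via overflowing/inflowing invariant manifolds and a Hadamard graph transform with spectral-gap estimates --- precisely the route you hold in reserve in your final paragraph. Your extended-system/center-manifold reduction is the standard alternative, and it goes through: appending $\epsilon'=0$ to \eqref{eq:gen2} turns $M_0=S_0\times\{0\}$ into a compact set of equilibria, and your spectral computation is right up to one nuance worth recording: since $\partial_\epsilon(\epsilon g)=g\neq 0$ in general, the $n+1$ zero eigenvalues come with a Jordan block coupling the $y$- and $\epsilon$-directions, so the center subspace is the \emph{generalized} kernel of $DF(p)$ (the kernel proper is only $T_pS_0$ when $g(p)\neq 0$); this does not harm the argument, as the center manifold is tangent to the generalized center subspace, which still projects isomorphically onto the $(y,\epsilon)$-coordinates and hence yields the graph representation $x=h(y,\epsilon)$. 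What your route buys: it is short, uses off-the-shelf machinery, gives joint $C^r$ smoothness in $(y,\epsilon)$ up to $\epsilon=0$ --- whence $h_\epsilon=h_0+\cO(\epsilon)$ and the $\cO(\epsilon)$ Hausdorff estimate by Taylor expansion --- and the invariance of the slices $\{\epsilon=\mathrm{const}\}$ delivers each $S_\epsilon$ for free. What it costs relative to Fenichel's construction: center manifolds are only $C^r$ for finite $r$ (consistent with the paper's assumption (A1), but not $C^\infty$), they are non-unique --- harmless here, and in fact consonant with the paper's remark that $S_\epsilon$ is not unique, although the $\cO(e^{-K/\epsilon})$ closeness of different representatives quoted after the theorem does not follow from the center-manifold argument alone --- and the reduction hinges on $M_0$ consisting entirely of equilibria, which is special to the critical-manifold setting; the graph transform, by contrast, proves persistence of arbitrary compact normally hyperbolic invariant manifolds and additionally produces the stable/unstable fibrations (asymptotic phase) that the fuller versions of Fenichel theory assert.
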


$S_\epsilon$ is called a \texttt{slow manifold} and is usually not unique. In regions that remain at a fixed distance from the boundary of $S_\epsilon$, all manifolds satisfying Theorem \ref{thm:fenichel1} lie at a Hausdorff distance $\cO(e^{-K/\epsilon})$ from each other for some $K > 0$ with $K = \cO(1)$. The choice of representative will be irrelevant for the asymptotic analysis we are interested in here; see also \cite{MKKR}. If the choice of subset $S_0$ is understood then we also write $C_\epsilon$ for the slow manifold associated to $C_0$ and refer to $C_\epsilon$ as ``the'' \texttt{slow manifold}.\\

\begin{figure}[htbp]
\centering
\psfrag{R1}{(R2)}
\psfrag{R2}{(R1)}
\psfrag{x}{$x$}
\psfrag{y}{$y$}
\psfrag{C0}{$C_0$}
\psfrag{g0}{$\gamma_0$}
\psfrag{fold}{$\label{eq:fold} \left\{\begin{array}{rcl}\epsilon \dot{x}&=&y-x^2\\ \dot{y}&=&-1 \\ \end{array}\right.$}
 \includegraphics[width=0.85\textwidth]{./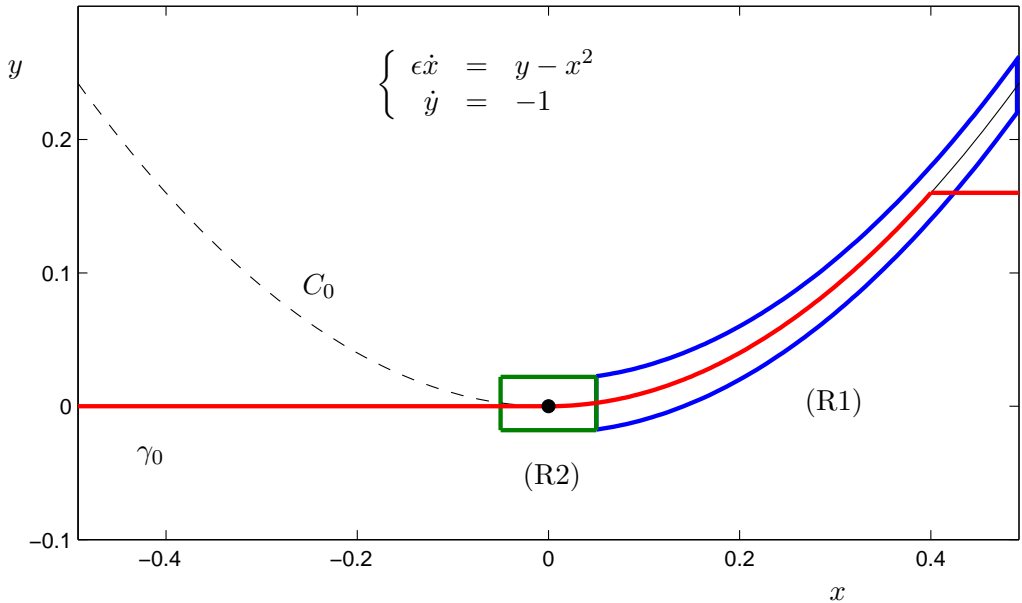}
\caption{\label{fig:fig1}Critical transition at a fold bifurcation of the fast subsystem. The critical manifold $C_0$ splits into a repelling part (dashed grey) and an attracting part (solid black). A typical candidate trajectory $\gamma_0$ (red) is shown. We have also sketched the two different regions: (R1, blue) where normal hyperbolicity of the critical manifold holds and (R2, green) near the bifurcation point.}
\end{figure}

A \texttt{candidate trajectory} $\gamma_0$ is a concatenation of slow and fast subsystem trajectories; see Figure \ref{fig:fig1}. More precisely we define a candidate as a homeomorphic image $\gamma_0(t)$ of a partitioned interval $a = t_0 < t_1 < \cdots < t_m = b$, where the image of each subinterval $\gamma_0(t_{j-1},t_j)$ is a trajectory of either the fast or the slow subsystem and the image $\gamma_0(a,b)$ has an orientation that is consistent with the orientations on each subinterval $\gamma_0(t_{j-1},t_j)$ induced by the fast and slow flows. Note that the intervals $(t_j,t_{j+1})$ do not necessarily correspond to the time parametrizations of the fast or slow subsystem; to achieve such a parametrization one has to pick a convention such as using the slow time and compactifying infinite time intervals as necessary. 

If consecutive images $\gamma_0(t_{j-1},t_j)$ and $\gamma_0(t_{j},t_{j+1})$ are trajectories for different subsystems then we say that $\gamma_0(t_j)$ is a \texttt{transition point}.

\begin{defn}
\label{defn:ct}
Let $p=(x_p,y_p)\in C_0$ be a point where the critical manifold $C_0$ is not normally hyperbolic. We say that $p$ is a \texttt{critical transition} if there is a candidate $\gamma_0$ so that 
\begin{itemize}
 \item [(C1)] $\gamma_0(t_{j-1},t_j)$ is contained in a normally hyperbolic attracting submanifold of $C_0$,
 \item [(C2)] $p=\gamma_0(t_j)$ is a transition point,
 \item [(C3)] and $\gamma_0(t_{j-1},t_j)$ is oriented from $\gamma_0(t_{j-1})$ to $\gamma_0(t_j)$. 
\end{itemize}
\end{defn}

Definition \ref{defn:ct} was suggested in \cite{KuehnCT1}. It is related to the concept of ``hard'' or ``catastrophic'' loss of stability (\cite{Kuznetsov}, p.87 or \cite{ArnoldEncy}, p.36) but does not coincide with it. Note that Definition \ref{defn:ct} is entirely based upon the singular limit $\epsilon=0$. Definition \ref{defn:ct} is simple, easy to verify for a system, concretely includes the focus on the candidate orbit occuring in an actual time series and also seems to represent all the requirements laid out in \cite{Schefferetal}. Note carefully that (C1) excludes slow canard orbit segments in repelling parts of the critical manifold but see Section \ref{sec:conclusions} for possible extensions.     

\begin{prop}
\label{prop:negative}
Suppose $p=(x_p,y_p)$ is Lyapunov stable with respect to the fast subsystem, then there is no critical transition at $p$.
\end{prop}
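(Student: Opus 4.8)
The plan is to argue by contradiction, using only the structure of a candidate at a transition point together with the $\varepsilon$--$\delta$ definition of Lyapunov stability. Suppose there were a critical transition at $p$; then there is a candidate $\gamma_0$ with $p=\gamma_0(t_j)$ satisfying (C1)--(C3). By (C1) the preceding segment $\gamma_0(t_{j-1},t_j)$ lies on $C_0$ and is therefore a trajectory of the slow subsystem, and by (C3) it is oriented so that the orbit \emph{arrives} at $p$. Since $p$ is a transition point (C2), the following segment $\gamma_0(t_j,t_{j+1})$ must be a trajectory of the \emph{other} subsystem, i.e.\ of the fast subsystem, and it \emph{emanates} from $p$. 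Because $\gamma_0$ is a homeomorphic image of the partitioned interval, this fast segment is nonconstant: $\gamma_0(t_{j+1})\neq\gamma_0(t_j)=p$.

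First I would pin down the asymptotic role of $p$ for this fast orbit. As $p\in C_0$ we have $f(x_p,y_p,0)=0$, so $p$ is an equilibrium of the fast subsystem \eqref{eq:fss}, whose flow leaves $y\equiv y_p$ invariant; thus the whole fast segment lies in the fiber $\{y=y_p\}$ and Lyapunov stability of $p$ reduces to Lyapunov stability of $x_p$ for $x'=f(x,y_p,0)$ in $\R^m$. By uniqueness of solutions a nonconstant orbit can never equal the equilibrium $p$ at any finite fast time, so $p$ can be attained only as a limit at infinite fast time. This is exactly where the candidate's convention of compactifying infinite fast-time intervals enters: the endpoint $\gamma_0(t_j)=p$ corresponds to fast time $-\infty$ along the departing orbit. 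Denoting this nonconstant fast orbit by $\phi$, the forward orientation inherited from (C3) therefore forces $\phi(t)\to p$ as $t\to-\infty$, i.e.\ $p$ is the $\alpha$-limit of $\phi$. It is worth stressing that the orientation is essential here: a Lyapunov stable equilibrium may well be the $\omega$-limit of nonconstant orbits, and it is only the fact that $p$ is a \emph{backward} limit that will produce the contradiction.

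The contradiction then comes from the elementary fact that a Lyapunov stable equilibrium cannot be the $\alpha$-limit of any nonconstant orbit, which I would prove directly. Pick $t_0$ with $\phi(t_0)\neq p$ and set $r:=\|\phi(t_0)-p\|>0$, so that $\phi(t_0)\notin U:=B(p,r)$. Lyapunov stability furnishes a neighborhood $V\subseteq U$ of $p$ such that every forward orbit starting in $V$ stays in $U$ for all nonnegative fast time. Since $\phi(t)\to p$ as $t\to-\infty$, there is some $T<t_0$ with $\phi(T)\in V$, whence $\phi(t)\in U$ for all $t\geq T$; in particular $\phi(t_0)\in U$, contradicting $\phi(t_0)\notin U$. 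Hence no such fast orbit emanates from $p$, and there is no critical transition at $p$.

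The routine part is the stability lemma, which is a one-line $\varepsilon$--$\delta$ argument. I expect the main obstacle to be the bookkeeping in the middle paragraph: correctly extracting from Definition \ref{defn:ct} that the departing segment is a fast orbit emanating from $p$, and justifying---via uniqueness of solutions for the fast flow together with the compactification convention for infinite fast-time intervals---that $p$ is genuinely the $\alpha$-limit of that orbit rather than merely an endpoint of a reparametrized finite arc. Once the departing segment is identified as a nonconstant fast trajectory with $p$ as its backward limit, the conclusion is immediate.
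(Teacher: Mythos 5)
Your proposal is correct and follows essentially the same route as the paper's own (much terser) proof: argue by contradiction, use (C2)--(C3) to conclude that a fast-subsystem orbit segment must emanate from $p$, and contradict Lyapunov stability. Your version merely fills in the details the paper leaves implicit---that $p$, being a fast-subsystem equilibrium, can only be the $\alpha$-limit of the departing nonconstant fast orbit, and that a Lyapunov stable equilibrium admits no such orbit---which is a faithful elaboration rather than a different argument.
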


\begin{proof}
Suppose $\gamma_0$ is a candidate that satisfies (C1) of Definition \ref{defn:ct}. If $p= \gamma_0(t_j)$ is a transition point then the orientation condition (C3) implies that the $\gamma_0(t_{j},t_{j+1})$ is an orbit segment in the fast subsystem starting from $p$. Since $p$ is Lyapunov stable we have reached a contradiction. 
\end{proof}

In this paper, we are interested in the approach of trajectories to critical transitions as illustrated in Figure \ref{fig:fig1}. This approach towards a critical transitions can be subdivided into two main regions: (R1) Fenichel's Theorem applies near a normally hyperbolic critical manifold and (R2) Fenichel's Theorem fails near the bifurcation point. Note that Fenichel's Theorem implies that near a local bifurcation point $(x,y)=(x_p,y_p)$ of the fast subsystem the region (R2) shrinks to $(x_p,y_p)$ as $\epsilon \ra 0$. By making $\epsilon$ sufficiently small, we should start to focus on (R1). Whenever we consider decreasing $y\ra y_p$ we shall make the assumption that $\epsilon$ has been chosen small enough so that we stay inside region (R1). For example, for a \texttt{fold point} \cite{KruSzm1} it is known that the size of (R2) scales like
\benn
(x,y)\sim(\cO(\epsilon^{1/3}),\cO(\epsilon^{2/3}))\in \R^2
\eenn 
as $\epsilon \ra 0$; see also Lemma \ref{lem:asympCeps}. Therefore we would assume that $\epsilon^{1/3}\ll y$ as $y$ gets small. We formalize our assumptions by restricting the analysis to a compact domain contained in the region (R1):

\begin{itemize}
 \item[(A0)] Fast-slow systems will be considered on a compact domain for $\cD(\epsilon)=\cD=\cD_x\times \cD_y\subset \R^m\times \R^n$ that depends smoothly on $\epsilon$. $\cD(\epsilon)$ is chosen so that an attracting slow manifold $C^a_\epsilon$ is contained in $\cD(\epsilon)$, the intersection $\partial \cD(\epsilon)\cap C^a_\epsilon$ is transverse and $\cD(\epsilon)$ is contained in the basin of attraction of $C^a_\epsilon$; the slow manifold $C^a_\epsilon$ is given locally as a graph $C^a_\epsilon=\{(x,y)\in \cD(\epsilon):x=h_\epsilon(y)\}, \text{  for $h_\epsilon:\cD_y\ra \cD_x$.}$ Furthermore, fast subsystem local bifurcation points will lie on $\partial \cD(0)$ and asymptotics with respect to $y\ra y_p$ is chosen depending on $\epsilon$ so that normal hyperbolicity holds; see Figure \ref{fig:fig1}.
\end{itemize}

Note that this means that all scaling estimates we derive are restricted to a bounded domain. Within this bounded domain no other attracting critical manifold perturbs to a slow manifold.

\section{Fast Subsystem Normal Forms and Critical Transitions}
\label{sec:nforms}

We assume familiarity with the normal form approach to bifurcation theory (\cite{GH}, p.138) and apply it in the singular limit to the fast subsystem viewing $y\in\R^n$ as parameters. The number of slow variables $y\in\R^n$ is chosen as the codimension of the bifurcation. We are going to check which bifurcations are critical transitions in the sense of Definition \ref{defn:ct}. Note carefully that this classification, although complete on the singular limit level $\epsilon=0$, has interesting possible extensions which are discussed in Section \ref{sec:conclusions}. 

Assume without loss of generality that the bifurcation point is at $x=(0,\ldots,0)=:0$ and $y=(0,\ldots,0)=:0$. To reduce the analysis to normal forms we will assume that all the necessary genericity conditions (non-degeneracy and transversality) are satisfied \cite{Kuznetsov}: 

\begin{itemize}
 \item[(A1)] $f\in C^{r}(\R^{m+n+1},\R^m)$ where $r\geq 1$ is chosen according to the differentiability required by normal form theory. We also assume that $g\in C^2(\R^{m+n+1},\R^n)$.
 \item[(A2)] The genericity conditions for bifurcations hold so that normal form theory applies.
\end{itemize}

The only generic codimension one bifurcation with $x\in \R^1$ is the \texttt{fold bifurcation} with normal form (\cite{Kuznetsov}, p.84)
\be
\label{eq:nf_fold}
f(x,y)=-y-x^2.
\ee
Considering the dynamics of the slow variable as $g(x,y)$ \cite{KuehnCT1} we get the fast-slow system
\be
\label{eq:nf_fold1}
\begin{array}{rcl}
 x'&=&-y-x^2,\\
 y'&=&\epsilon g(x,y),\\
\end{array}
\ee
where $y\in\R^1$ since we have a codimension one bifurcation. The critical manifold for \eqref{eq:nf_fold1} is $C_0=\{(x,y)\in\R^2:x=\pm \sqrt{-y}=:h_0^\pm(y),y\leq 0\}$. $C_0\cap\{x=\sqrt{-y},y<0\}$ is attracting and $C_0\cap \{x=-\sqrt{-y},y<0\}$ is repelling. The linearization $D_xf(h_0^+(y),y)$ around the attracting branch of the critical manifold will be crucial for the calculations in Section \ref{sec:variance} as it describes the dynamics in the region (R1); therefore we will calculate/record this linearization for each critical transition. 

\begin{lem}
\label{lem:fold}
If $g(0,0)>0$ then \eqref{eq:nf_fold1} has a critical transition at $(x,y)=(0,0)$.
\end{lem}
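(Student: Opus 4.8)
The plan is to verify the three conditions (C1)--(C3) of Definition \ref{defn:ct} directly by exhibiting an explicit candidate trajectory at the singular limit, since Definition \ref{defn:ct} is formulated entirely at $\epsilon=0$. First I would confirm that $(0,0)$ is eligible as a critical transition point, i.e. that $C_0$ fails to be normally hyperbolic there. Because $D_xf(x,y)=-2x$, the linearization $D_xf(0,0)=0$ has a zero eigenvalue, so normal hyperbolicity breaks down exactly at the fold, as required.

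Next I would analyze the reduced slow flow on the attracting branch $x=h_0^+(y)=\sqrt{-y}$, $y<0$. By \eqref{eq:sss} this flow is $\dot{y}=g(h_0^+(y),y)=g(\sqrt{-y},y)$, which tends to $g(0,0)$ as $y\ra 0^-$ since $h_0^+(y)\ra 0$. As $g\in C^2$ is continuous and $g(0,0)>0$ by hypothesis, there is $\delta>0$ such that $\dot{y}>0$ for all $y\in(-\delta,0)$ along the attracting branch. Hence the slow flow is strictly increasing in $y$ near the fold and transports trajectories \emph{toward} the fold point $(0,0)$; this directional statement is precisely what will furnish (C3).

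The candidate $\gamma_0$ is then assembled as a concatenation of two segments: a slow segment $\gamma_0(t_{j-1},t_j)$ lying on the attracting branch with $y$ increasing from some $y_0\in(-\delta,0)$ up to $\gamma_0(t_j)=(0,0)$, followed by a fast segment $\gamma_0(t_j,t_{j+1})$ that is an orbit of the fast subsystem $x'=-y-x^2,~y'=0$ emanating from $(0,0)$. Condition (C1) holds because the first segment lies in the normally hyperbolic attracting part of $C_0$; condition (C2) holds because $p=(0,0)$ separates a slow segment from a fast segment and is therefore a transition point; and condition (C3) holds because the slow-flow orientation on the first segment, being the direction of increasing $y$ established above, runs from $\gamma_0(t_{j-1})$ toward $\gamma_0(t_j)=p$.

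The step requiring the most care is ensuring that the fold supports a genuine fast escape, so that $(0,0)$ is a true transition point and not merely a stagnation endpoint of the slow flow. At $y=0$ the fast subsystem reduces to $x'=-x^2$, for which $x=0$ is a degenerate equilibrium that is not Lyapunov stable: initial data with $x<0$ are driven away. Thus a nontrivial fast orbit leaves $(0,0)$, so the concatenation is a legitimate candidate with a transition point at the fold. This is consistent with Proposition \ref{prop:negative}, whose obstruction does not apply here precisely because the fold point is not Lyapunov stable with respect to the fast subsystem.
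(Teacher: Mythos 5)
Your proposal is correct and is exactly the argument the paper has in mind: the paper simply declares the proof of this lemma obvious, and your verification of (C1)--(C3) — the slow drift $\dot{y}=g(\sqrt{-y},y)>0$ on the attracting branch $x=h_0^+(y)$ forcing the approach to the fold, plus the observation that $x'=-x^2$ makes the origin semistable (not Lyapunov stable) so a genuine fast segment emanates from $(0,0)$ — follows the same template the paper uses explicitly in its proofs for the cusp, Bautin and Bogdanov--Takens cases. No gaps; your closing consistency check against Proposition \ref{prop:negative} is also apt.
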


The proof is obvious and similar results hold for the pitchfork and transcritical normal forms   
\bea
f(x,y)&=&yx+sx^3,\quad D_xf(h_0(y),y)=y,\quad \text{for $s=\pm 1$, (\cite{Kuznetsov}, p.284)}, \label{eq:nf_pitchfork}\\
f(x,y)&=&yx-x^2,\quad D_xf(h_0(y),y)=y. \quad \text{(\cite{GH}, p.149))} \label{eq:nf_transcritical}.
\eea

\begin{lem}
If $g(0,0)>0$ then \eqref{eq:nf_pitchfork} has a critical transition at $(x,y)=(0,0)$ if and only if the pitchfork bifurcations is subcritical ($s=1$). If $g(0,0)\neq0$ then \eqref{eq:nf_transcritical} has a critical transition at $(x,y)=(0,0)$.
\end{lem}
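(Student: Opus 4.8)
The plan is to follow the same recipe used for the fold in Lemma \ref{lem:fold}: for each normal form I would compute the critical manifold $C_0$, use the recorded linearization $D_xf$ to label the attracting and repelling branches, and then either exhibit a candidate $\gamma_0$ meeting (C1)--(C3) of Definition \ref{defn:ct} or invoke Proposition \ref{prop:negative} to rule a critical transition out. The role of the sign of $g(0,0)$ is to fix the slow-flow direction $\dot{y}=g$ along a branch, and hence whether the orientation condition (C3) can be met.

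For the pitchfork \eqref{eq:nf_pitchfork}, first I would note $C_0=\{x=0\}\cup\{y=-sx^2\}$. On the trivial branch $D_xf(0,y)=y$, so $\{x=0,\,y<0\}$ is attracting; a short computation on the nontrivial branch gives $D_xf=-2y$, so for $s=1$ the nontrivial branches live in $y<0$ and are repelling, whereas for $s=-1$ they live in $y>0$ and are attracting. In the subcritical case $s=1$ the fast flow at $p$ is $x'=x^3$, whose equilibrium $x=0$ is not Lyapunov stable, and since $g(0,0)>0$ the slow flow $\dot{y}=g(0,y)$ pushes $y$ upward along the attracting branch $\{x=0,\,y<0\}$ towards $p$; concatenating this slow segment with a fast segment leaving $p$ gives a candidate satisfying (C1)--(C3), so $p$ is a critical transition. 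For the converse, in the supercritical case $s=-1$ the fast subsystem at $p$ is $x'=-x^3$, for which $x=0$ is Lyapunov (indeed asymptotically) stable, so Proposition \ref{prop:negative} forbids any critical transition, yielding the ``only if''.

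For the transcritical \eqref{eq:nf_transcritical} I would compute $C_0=\{x=0\}\cup\{x=y\}$ with $D_xf(0,y)=y$ and $D_xf(y,y)=-y$, so the attracting pieces are $\{x=0,\,y<0\}$ and $\{x=y,\,y>0\}$, while the fast flow at $p$ is $x'=-x^2$ with escape direction $x<0$. The sign of $g(0,0)$ then selects which attracting branch supplies the approach: if $g(0,0)>0$ the slow flow drives $y$ up to $0$ along $\{x=0,\,y<0\}$, and if $g(0,0)<0$ it drives $y$ down to $0$ along $\{x=y,\,y>0\}$; either way the incoming slow segment is oriented towards $p$ and can be concatenated with the fast escape, producing a candidate obeying (C1)--(C3), hence a critical transition whenever $g(0,0)\neq0$.

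The routine parts are the branch computations and the stability signs, which are immediate from $D_xf$. The step deserving the most care is the converse for the pitchfork: one must confirm that \emph{no} alternative candidate rescues a critical transition in the supercritical case. The attracting nontrivial branches $\{y=-sx^2\}$ sit in $y>0$, and since $g(0,0)>0$ the slow flow moves $y$ away from $p$ along them, so they cannot supply an inward-oriented approach; the only other attracting branch $\{x=0,\,y<0\}$ leads into the Lyapunov-stable fast equilibrium. This is exactly what Proposition \ref{prop:negative} encodes, so the main obstacle is really the orientation bookkeeping in (C3): one has to check that the sign of $g(0,0)$ excludes \emph{every} attracting branch at once, rather than just the obvious one.
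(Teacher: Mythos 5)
Your proof is correct and follows exactly the argument the paper leaves implicit: the paper omits the proof of this lemma as ``obvious'' by analogy with the fold case, and your branch computations on $C_0$, the use of Proposition \ref{prop:negative} for the supercritical pitchfork in \eqref{eq:nf_pitchfork}, and the explicit candidate constructions for \eqref{eq:nf_transcritical} are precisely the intended argument. One minor simplification: in the supercritical case there is no need to rule out alternative candidates branch by branch, since Proposition \ref{prop:negative} excludes \emph{every} candidate at once from Lyapunov stability of the fast equilibrium alone --- as you note yourself at the end.
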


The remaining one-dimensional fast subsystem is the codimension two \texttt{cusp bifurcation}. The normal form is (\cite{Kuznetsov}, p.304-305)
\be
\label{eq:nf_cusp}
f(x,y)=y_1+y_2x+sx^3, \qquad \text{for $s=\pm 1$}  
\ee
where the fast dynamics $x'=f(x,y)$ is augmented with two-dimensional slow dynamics $y'=\epsilon g(x,y)$, $y=(y_1,y_2)\in\R^2$. The critical manifold for \eqref{eq:nf_cusp} is $C_0=\{(x,y)\in\R^3:0=y_1+y_2x+sx^3\}$. Due to the two-dimensional slow flow it is slightly less obvious to determine under which conditions the cusp bifurcation is a critical transition.

\begin{lem}
There is no critical transition for \eqref{eq:nf_cusp} at $(x,y)=(0,0)$ if $s=-1$. If $s=1$ then \eqref{eq:nf_cusp} has a critical transition at $(x,y)=(0,0)$ if and only if $g_2(0,0)>0$ and $g_1(0,0)=0$.
\end{lem}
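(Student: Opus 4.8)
The plan is to treat the two signs of $s$ separately and, in the nontrivial case, to reduce the existence of a qualifying candidate (in the sense of Definition \ref{defn:ct}) to the local phase portrait of the reduced flow on the attracting sheet of $C_0$ near the cusp point $p=(0,0,0)$. I first record the fast linearization $D_xf(x,y)=y_2+3sx^2$, so that the attracting part of the critical manifold is $\{(x,y)\in C_0:y_2+3sx^2<0\}$ and the non-hyperbolic fold set $\{y_2+3sx^2=0\}$ passes through $p$.

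For $s=-1$ the statement is immediate: at $p$ the fast subsystem reduces to $x'=-x^3$, whose equilibrium $x=0$ is Lyapunov stable, so Proposition \ref{prop:negative} rules out a critical transition. This settles the first assertion.

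For $s=+1$ the fast subsystem at $p$ is $x'=x^3$, so $x=0$ is unstable and a fast fibre can leave $p$; the question becomes whether a slow orbit on the attracting sheet can reach $p$. Parametrizing $C_0$ by $(x,y_2)$ through $y_1=-y_2x-x^3$ and differentiating this relation along $\dot y_i=g_i$, I obtain the reduced flow $\dot y_2=g_2$ and $\dot x=-(g_1+xg_2)/(y_2+3x^2)$, valid where the denominator $D_xf<0$. Since the attracting sheet near $p$ lies in $\{y_2<-3x^2\le 0\}$, a candidate approaching $p$ must satisfy $y_2\nearrow 0$, forcing $\dot y_2(p)=g_2(0,0)\ge 0$ and hence the necessity of $g_2(0,0)>0$ (the borderline $g_2(0,0)=0$ being a non-generic degeneracy that I discard). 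For the other condition I observe that if $g_1(0,0)\ne 0$ then near $p$ the drift obeys $\dot x\approx -g_1(0,0)/(y_2+3x^2)$, which is unbounded with the fixed sign $\mathrm{sign}(g_1(0,0))$ as the orbit nears the fold set; the orbit is therefore pushed off the attracting sheet at a regular fold point with $x\ne 0$, so its transition point is a codimension-one fold rather than $p$, and (C2) fails at the cusp. This proves the necessity of $g_1(0,0)=0$.

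For sufficiency I assume $g_1(0,0)=0$ and $g_2(0,0)>0$ and construct the candidate by desingularizing the reduced flow: multiplying the reduced vector field by the factor $-(y_2+3x^2)$, which is positive and orientation-preserving on the attracting sheet, yields the smooth system $\tilde x'=g_1+xg_2$, $\tilde y_2'=-(y_2+3x^2)g_2$. Because $g_1(0,0)=0$, the point $p$ is an equilibrium, and its linearization is upper triangular with eigenvalues $\partial_xg_1(0,0)+g_2(0,0)$ and $-g_2(0,0)<0$; the guaranteed negative eigenvalue supplies a stable direction along which, using $g_2(0,0)>0$, I will show one incoming branch genuinely enters $\{y_2<-3x^2\}$. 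Translating back to slow time gives a slow orbit limiting onto $p$ inside the attracting sheet, which concatenated with the unstable fast fibre $x'=x^3$ leaving $p$ produces a candidate satisfying (C1)--(C3). The main obstacle is exactly this last verification: at the cusp both components of the reduced field degenerate, so the relevant incoming branch must be read off from the linear (and, in the borderline eigenvalue cases, higher-order) jet of $g$ at $p$, and one has to confirm that it lies strictly in the attracting region and not along the fold set or on the repelling sheet.
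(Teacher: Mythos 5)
Your $s=-1$ case coincides with the paper's proof: Lyapunov stability of $x=0$ for $x'=-x^3$ plus Proposition \ref{prop:negative}. For $s=1$ you take a genuinely different route. The paper projects the attracting sheet to the $(y_1,y_2)$-plane, where it covers the cusp wedge $\{4y_2^3+27y_1^2<0\}$, and reads off both slow-flow conditions from the fact that the two branches $\Gamma_\pm$ of the discriminant curve satisfy $\Gamma_\pm'(y_2)\to 0$, i.e.\ the wedge is tangent to the negative $y_2$-axis, so an orbit of the (to leading order constant) slow flow $\dot y=g$ can reach the origin inside the wedge if and only if $g_1(0,0)=0$ and $g_2(0,0)>0$. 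You instead work in the $(x,y_2)$-chart on $C_0$, derive the singular reduced equation $\dot x=-(g_1+xg_2)/(y_2+3x^2)$, and desingularize by the factor $-(y_2+3x^2)$, which is positive on the attracting sheet. Your necessity arguments are sound and essentially equivalent to the paper's: the blow-up of $\dot x$ with the fixed sign of $g_1(0,0)$ is the chart-side manifestation of the wedge pinching like $|y_1|\leq C|y_2|^{3/2}$ (to make it airtight, note $dx/dy_2\sim g_1(0,0)/(g_2(0,0)|y_2|)$ even deep inside the sheet, so $x$ would have to change by an amount of order $|\ln y_2|$ while being confined to $|x|\leq\sqrt{-y_2/3}$). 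Both you and the paper gloss the borderline $g_2(0,0)=0$ in the same way, so that is not a point against you. Your desingularization is arguably a \emph{sharper} device for sufficiency than the paper's appeal to the unfolding picture, since it converts the approach to the cusp into a stable-manifold statement; the paper essentially asserts this step.

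The one genuine gap is the verification you explicitly defer: you never check that the incoming stable branch lies in the attracting sheet. It is, however, immediate from data you already computed. The eigendirection of $\lambda_1=\partial_xg_1(0,0)+g_2(0,0)$ is the $x$-axis, while the eigenvector for $-g_2(0,0)$ is $v=\bigl(-\partial_{y_2}g_1(0,0),\,\partial_xg_1(0,0)+2g_2(0,0)\bigr)$, whose $y_2$-component is nonzero unless $\partial_xg_1(0,0)+2g_2(0,0)=0$. Hence the stable branch entering $\{y_2<0\}$ is a graph $x=ay_2+\cO(y_2^2)$, and since the fold parabola $y_2=-3x^2$ is tangent to the $x$-axis at the origin, $y_2<-3x^2$ holds along this branch for all small $y_2<0$: it lies strictly inside the attracting sheet, touching neither the fold set nor the repelling sheets (and in the $(y_1,y_2)$-projection it satisfies $y_1=\cO(y_2^2)=o(|y_2|^{3/2})$, i.e.\ strictly inside the paper's wedge). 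Back in slow time, convergence as desingularized time tends to $+\infty$ costs only finite $s$ because $ds=|y_2+3x^2|\,d\tilde t$ decays exponentially along the branch; and even an infinite slow time would be admissible under the paper's compactification convention for candidates, so (C1)--(C3) follow. In the degenerate case $\partial_xg_1(0,0)+2g_2(0,0)=0$ you would indeed need higher jets, but such nongeneric coincidences are excluded in the same spirit as (A2) and as the paper's tacit exclusion of $g(0,0)=0$. In short: correct structure, correct necessity, and a deferred sufficiency step that closes in three lines with the eigenvector you already wrote down.
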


\begin{proof}
First consider the case $s=-1$. At $y_1=0=y_2$ the fast subsystem is $x'=-x^3$. It is easy to see that $x=0$ is asymptotically stable and Proposition \ref{prop:negative} implies that there cannot be a critical transition at $(x,y)=(0,0)$. For $s=1$ the fast subsystem is $x'=x^3$ so that a candidate orbit $\gamma_0$ can have a segment $\gamma_0(t_j,t_{j+1})$ in the fast subsystem oriented from $\gamma_0(t_j)$ to $\gamma_0(t_{j+1})$. To see that there exists an attracting critical manifold connecting to $(x,y)=(0,0)$ we need the unfolding of a cusp bifurcation. The linearization of \eqref{eq:nf_cusp} is $D_xf(x,y)=y_2+3x^2$ and the stability of the slow manifold changes at fold points when $D_xf|_{C_0}=0$. Given the two equations 
\benn
\begin{array}{lcl}
0 &=& y_1+y_2x+x^3\\
0 &=& y_2+3x^2\\
\end{array}
\eenn
the variable $x$ can be eliminated which yields the classical cusp curve. After a projection into the $(y_1,y_2)$-plane it is given by $\Gamma:=\{(y_1,y_2)\in\R^2:4y_2^3+27y_1^2=0\}$. A repelling subset of the critical manifold is $C^r_0:=C_0\cap\{4y_2^3+27y_1^2>0\}$. The set $C_0\cap\{4y_2^3+27y_1^2<0\}$ splits into three branches corresponding to the three solutions of $f(x,y)=0$ where two branches $C^{r\pm}_0$ are repelling and one branch $C^a_0$ is attracting; observe that $y_2<0$ for any of the three branches. Now consider a candidate $\gamma_0$ with $\gamma_0(t_{j-1},t_j)\subset C^a_0=\{x=h_0(y)\}$. The slow flow on $C^a_0$ is given by 
\be
\label{eq:sf_cusp}
\begin{array}{lcl}
\dot{y}_1&=&g_1(h_0(y),y),\\
\dot{y}_2&=&g_2(h_0(y),y).\\
\end{array}
\ee
Since the $y_2$-component of $\gamma_0(t_{j-1})$ is negative we must have $g_2(0,0)>0$. Furthermore, we know that trajectories of \eqref{eq:sf_cusp} on $C^a$ reach $(y_1,y_2)=(0,0)$ if and only if $4y_1^3+27y_1^2<0$ holds for the $y$-components of $\gamma_0(t_{j-1},t_j)$. Considering the branches of $\Gamma$ we have
\benn
y_1=\Gamma_\pm(y_2)=\pm\sqrt{\frac{4}{27}y_2^3}\qquad\Rightarrow \quad \frac{d}{dy_2}\Gamma_\pm(y_2)=\pm\frac{y_2^2}{\sqrt{3y_2^3}}=\Gamma_\pm'(y_2).
\eenn
In particular, we find that $\lim_{y_2\ra 0}\Gamma_\pm'(y_2)=0$ which implies that $g_1(0,0)=0$ if the candidate $\gamma_0$ reaches $(x,y)=(0,0)$. Hence there exists $\gamma_0$ as required by Definition \ref{defn:ct} if and only if $g_2(0,0)>0$ and $g_1(0,0)=0$. 
\end{proof}

The attracting part of the critical manifold $C^a_0$ (as introduced in the previous proof) is $C^a_0=\{(x,y_1,y_2)\in\R^3:x=h_0(y)\}$. The linearization is
\be
\label{eq:A0_cusp}
D_xf(h_0(y),y)=y_2+3h_0(y)^2=\cO_y(y_2) \quad \text{as $y\ra 0$.}
\ee
where the notation $\cO_y(\cdot)$ indicates asymptotic scaling as $y\ra 0$ under the assumption that Fenichel Theory is still valid; see also Section \ref{sec:fast_slow} where this is referred to as region (R1). The asymptotic scaling in \eqref{eq:A0_cusp} holds since points on $C^a_0$ satisfy $y_2+3x^2<0$ because on $C^a_0$ we have 
\benn
y_2<0,\qquad x\in[-(-y_2/3)^{1/2},(-y_2/3)^{1/2}],\qquad  y_1=-xy_2-x^3
\eenn
Therefore, $x=h_0(y)$ grows at most like $\sqrt{y_2}$ as $y\ra 0$ and the scaling law in \eqref{eq:A0_cusp} follows. This concludes our discussion of one-dimensional fast subsystem bifurcations.\\ 

For two fast subsystem variables consider the codimension-one \texttt{Hopf bifurcation} normal form (\cite{Kuznetsov}, p.98)
\be
\label{eq:nf_Hopf}
\begin{array}{lcl}
f_1(x,y)&=&yx_1-x_2+l_1x_1(x_1^2+x_2^2),\\
f_2(x,y)&=&x_1+yx_2+l_1x_2(x_1^2+x_2^2),\\
\end{array}
\ee
where $l_1$ is the \texttt{first Lyapunov coefficient}. The critical manifold for \eqref{eq:nf_Hopf} is $C_0=\{(x,y)\in\R^3:x=0\}$ where $C_0\cap \{y<0\}$ is attracting and $C_0\cap \{y>0\}$ is repelling and the linearization is
\be
\label{eq:A0_Hopf}
D_xf(0,y)=\left(\begin{array}{cc}y & -1 \\ 1 & y \\ \end{array}\right).
\ee  

\begin{lem}
If $g(0,0)>0$ then \eqref{eq:nf_Hopf} has a critical transition at $(x,y)=(0,0)$ if and only if the Hopf bifurcation is subcritical ($l_1>0$).
\end{lem}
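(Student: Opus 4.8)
The plan is to reduce both directions of the equivalence to the fast-subsystem stability of the origin at the bifurcation value $y=0$, just as in the pitchfork and cusp cases. The natural first step is to pass to polar coordinates $x_1=r\cos\theta$, $x_2=r\sin\theta$ in \eqref{eq:nf_Hopf}, which decouples the fast dynamics into
\[
r'=yr+l_1r^3,\qquad \theta'=1.
\]
At $y=0$ this becomes $r'=l_1r^3$, so the sign of $l_1$ directly determines whether the origin attracts or repels under the fast subsystem.

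For the ``only if'' direction I would argue by contraposition. If the Hopf bifurcation is supercritical ($l_1<0$), then $r'=l_1r^3<0$ for $r>0$, so the origin is asymptotically—hence Lyapunov—stable with respect to the fast subsystem. Proposition \ref{prop:negative} then excludes any critical transition at $(x,y)=(0,0)$. Since the genericity assumption (A2) forces the non-degeneracy condition $l_1\neq0$, this already shows that a critical transition can occur only when $l_1>0$.

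For the ``if'' direction, assume $l_1>0$ and $g(0,0)>0$ and construct an explicit candidate verifying (C1)--(C3). The attracting branch $C_0\cap\{y<0\}$ is normally hyperbolic because $D_xf(0,y)$ in \eqref{eq:A0_Hopf} has eigenvalues $y\pm i$ with real part $y<0$, and this branch limits onto the origin as $y\to0^-$. On it the slow flow is $\dot y=g(0,y)$, and $g(0,0)>0$ lets me orient a slow segment $\gamma_0(t_{j-1},t_j)\subset C_0\cap\{y<0\}$ so that $y$ increases toward $0$ with $\gamma_0(t_j)=(0,0)$; this gives (C1) and (C3). Because $l_1>0$ yields $r'=l_1r^3>0$ at $y=0$, the origin is repelling in the fast subsystem, so a fast orbit emanates from it; appending such a fast segment $\gamma_0(t_j,t_{j+1})$ renders $(0,0)$ a transition point and supplies (C2).

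The only point needing care, and the crux of the argument, is confirming that the origin genuinely qualifies as a transition point in the subcritical case: one must check that the outgoing fast orbit can be concatenated with the incoming slow segment into an admissible candidate in the sense of the definition preceding Definition \ref{defn:ct}. This is immediate from $r'>0$ at the origin when $l_1>0$, and it is exactly the feature that separates the subcritical from the supercritical case, so it is where the equivalence is decided.
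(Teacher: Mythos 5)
Your proof is correct and takes essentially the same approach as the paper, which states this lemma without proof as ``obvious'' but uses precisely your two-step strategy in the analogous Bautin case (Lemmas \ref{eq:Baut_lem1}--\ref{lem:Bautin}): fast-subsystem Lyapunov stability of the origin plus Proposition \ref{prop:negative} rules out the supercritical case, while for $l_1>0$ the slow-flow condition $g(0,0)>0$ on the attracting branch $C_0\cap\{y<0\}$ together with instability of the fast equilibrium yields a candidate satisfying (C1)--(C3). Your polar reduction $r'=yr+l_1r^3$, $\theta'=1$ is computationally equivalent to the Lyapunov-function argument with $V(x)=x_1^2+x_2^2$ employed there, and your appeal to the compactification convention for candidate parametrizations correctly handles the concatenation at the transition point.
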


For vanishing first first Lyapunov coefficient ($l_1=0$) a codimension-two \texttt{generalized Hopf} (or \texttt{Bautin}) bifurcation occurs with normal form (\cite{Kuznetsov}, p.313)
\be
\label{eq:nf_Bautin}
\begin{array}{lcl}
f_1(x,y)&=&y_1x_1-x_2+y_2x_1(x_1^2+x_2^2) +l_2x_1(x_1^2+x_2^2)^2,\\
f_2(x,y)&=&x_1+y_1x_2+y_2x_2(x_1^2+x_2^2) +l_2x_2(x_1^2+x_2^2)^2,\\
\end{array}
\ee
where $l_2=\pm 1$ is the \texttt{second Lyapunov coefficient}. The critical manifold is $C_0=\{(x,y)\in\R^4:x=0=:h_0(y)\}$. The linearization $D_xf(h_0(y),y)$ coincides with the linearization \eqref{eq:A0_Hopf} for the Hopf bifurcation upon replacing $y$ by $y_1$.

\begin{lem}
\label{eq:Baut_lem1}
The Bautin bifurcation is not a critical transition if $l_2<0$.
\end{lem}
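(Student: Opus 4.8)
The plan is to reduce the statement to Proposition \ref{prop:negative}, so that it suffices to show the origin is Lyapunov stable with respect to the fast subsystem at the bifurcation parameter value $y=(0,0)$. Setting $y_1=y_2=0$ in \eqref{eq:nf_Bautin} annihilates both the linear term (governed by $y_1$) and the cubic term (governed by the first Lyapunov coefficient $y_2$), leaving the fast subsystem
\begin{align*}
x_1'&=-x_2+l_2x_1(x_1^2+x_2^2)^2,\\
x_2'&=x_1+l_2x_2(x_1^2+x_2^2)^2.
\end{align*}
Because both the linear and cubic contributions vanish at the Bautin point, stability is decided entirely by the sign of the second Lyapunov coefficient $l_2$, in direct analogy with the subcritical/supercritical dichotomy already used for the ordinary Hopf normal form \eqref{eq:nf_Hopf}.

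First I would exploit the rotational symmetry of the normal form by passing to the radial variable $r^2=x_1^2+x_2^2$. Since $r\dot r = x_1x_1'+x_2x_2'$, the antisymmetric rotation terms $-x_2$ and $x_1$ cancel upon forming this combination, and only the diagonal quintic terms survive. A one-line computation then gives the exact radial equation
\benn
r\,r'=l_2\,r^6,\qquad\text{i.e.}\qquad r'=l_2\,r^5.
\eenn
For $l_2<0$, that is $l_2=-1$, this yields $r'=-r^5<0$ for all $r>0$, so the radius is strictly decreasing along every nonzero fast trajectory. Hence the origin is asymptotically stable, and in particular Lyapunov stable, in the fast subsystem at $y=(0,0)$.

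Applying Proposition \ref{prop:negative} with $p=(0,0,0,0)$ then immediately yields that there is no critical transition at $(x,y)=(0,0)$, which completes the argument. There is essentially no hard step here: the only thing to notice is that the rotational symmetry makes the radial ODE \emph{exact} rather than merely leading order, so no separate control of the angular dynamics or of a higher-order remainder term is required, and one avoids the usual normal-form reduction used to extract Lyapunov coefficients. The companion case $l_2>0$ (where one expects a genuine critical transition under a sign condition on $g$) is not claimed here and would instead require constructing an admissible candidate $\gamma_0$ satisfying (C1)--(C3) of Definition \ref{defn:ct}, exactly as in the subcritical Hopf and cusp analyses above.
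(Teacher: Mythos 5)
Your proof is correct and follows essentially the same route as the paper: both arguments establish Lyapunov (indeed asymptotic) stability of the origin for the fast subsystem at $y=0$ and then conclude via Proposition \ref{prop:negative}, and your exact radial equation $r'=l_2r^5$ is precisely the paper's Lyapunov-function computation with $V(x)=x_1^2+x_2^2$ written in polar form, since $r\,r'=\tfrac12\frac{d}{dt}V$. A minor point in your favor: you correctly retain the rotation terms $-x_2$ and $x_1$ (observing that they cancel in the radial direction), whereas the paper's displayed fast subsystem \eqref{eq:Bautin_neg_concl} omits them --- a harmless discrepancy, as these terms are orthogonal to the radial direction and contribute nothing to $\frac{d}{dt}V$.
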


\begin{proof}
Without loss of generality let $l_2=-1$ then the fast subsystem at $(y_1,y_2)=(0,0)$ is  
\be
\label{eq:Bautin_neg_concl}
\begin{array}{lcl}
x_1'&=&-x_1(x_1^2+x_2^2)^2=:\tilde{f}_1(x),\\
x_2'&=&-x_2(x_1^2+x_2^2)^2=:\tilde{f}_2(x).\\
\end{array}
\ee
where $\tilde{f}=(\tilde{f_1},\tilde{f}_2)$. Define a function $V:\R^2\ra \R$ by $V(x_1,x_2):=x_1^2+x_2^2$. Observe that $V(x)>0$ for $x\neq 0$ and 
\benn
\frac{d}{dt}V(x)=D_xV(\tilde{f}(x))=-(2x_1^2+2x_2^2)(x_1^2+x_2^2)^2<0
\eenn
for $x\neq 0$. Therefore $V(x)$ is a Lyapunov function and $x=0$ is asymptotically stable as an equilibrium point of \eqref{eq:Bautin_neg_concl} and Proposition \ref{prop:negative} finishes the proof.
\end{proof}

\begin{lem}
\label{lem:Bautin}
If $l_2>0$ then the Bautin bifurcation is a critical transition if and only if $g_1(0,0)>0$ and either (a) $g_2(0,0)\neq 0$ or (b) $g_2(0,0)=0$ and $\frac{\partial g_2}{\partial y_2}(0,0)<\frac12$.
\end{lem}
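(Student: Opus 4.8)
The plan is to reduce the planar fast subsystem to its amplitude, read off the resulting one-dimensional bifurcation picture, and then translate into conditions on $g$ the requirement that a candidate can approach the Bautin point along an attracting branch and leave it by a genuine hard jump. Writing $(x_1,x_2)=(\rho\cos\theta,\rho\sin\theta)$ in \eqref{eq:nf_Bautin} with $l_2=1$ decouples the phase, $\theta'=1$, and yields the amplitude equation $\rho'=\rho(y_1+y_2\rho^2+\rho^4)$; in the variable $u=\rho^2$ this reads $u'=2u(y_1+y_2u+u^2)$. The critical manifold is $C_0=\{x=0\}=\{u=0\}$, attracting for $y_1<0$ and repelling for $y_1>0$, and at the Bautin point $u'=2u^3>0$, so the origin is unstable for the fast flow and Proposition \ref{prop:negative} does not obstruct a transition. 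The nonzero equilibria solve $u^2+y_2u+y_1=0$, producing the Hopf line $\{y_1=0\}$, the fold-of-cycles curve $T=\{y_1=y_2^2/4,\ y_2<0\}$ on which the double root equals $-y_2/2$, and a wedge $R=\{y_2<0,\ 0<y_1<y_2^2/4\}$ in which a small \emph{stable} limit cycle $u_-(y)$ coexists with the unstable one. Everything then reduces to deciding, for a candidate arriving along $C_0^a=\{u=0,\ y_1<0\}$, whether it leaves $(0,0)$ by escaping to the far field (a critical transition) or by being absorbed onto $u_-$ (a soft transition, hence not a critical transition).

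First I would record the elementary necessary condition. The slow flow on $C_0^a$ is $\dot y=g(0,y)$, and since the approach lies in $y_1<0$ and must reach the Hopf line at the origin, the orientation requirement (C3) forces $\dot y_1=g_1(0,0)>0$; this is exactly the sign guaranteeing that the slow trajectory through $(0,0)$ emanates from $y_1<0$, so a candidate with a segment on $C_0^a$ ending at $(0,0)$ exists precisely when $g_1(0,0)>0$. It then remains to decide, under this hypothesis, when the departure is hard.

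For case (a), $g_2(0,0)\neq 0$, the slow flow crosses the Hopf line transversally in the $y_2$-direction: to leading order the trajectory leaves $(0,0)$ along the straight line $y_2\approx(g_2(0,0)/g_1(0,0))\,y_1$. Since $T$ satisfies $|y_2|=2\sqrt{y_1}$ near the origin, this line has $|y_2|=\cO(y_1)\ll 2\sqrt{y_1}$, so for small $y_1>0$ the departing trajectory lies outside the wedge $R$ (on the side $y_1>y_2^2/4$) for either sign of $g_2(0,0)$. Hence no stable cycle is available to capture it, the fast flow carries it to the far field, and $(0,0)$ is a critical transition. This step is purely a comparison of a line with a parabola, and I expect it to be routine.

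The crux is case (b), $g_2(0,0)=0$, where the crossing is tangential, the sign of the departure is set by higher-order terms, and the trajectory can be steered into $R$, so the transition may turn soft. Here I would analyse the competition, as $\epsilon\to0$, between the growth of the destabilised amplitude and the slow drift of $y_2$: $u$ grows like $\exp(\tfrac1\epsilon\int 2y_1\,ds)$, while $\dot y_2\approx \tfrac{\partial g_2}{\partial y_1}(0,0)\,y_1+\tfrac{\partial g_2}{\partial y_2}(0,0)\,y_2$ governs how fast $y_2^2$ overtakes $4y_1$ and the nascent stable cycle $u_-\sim-y_2/2$ becomes available to absorb the orbit before it has escaped past the unstable cycle. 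Balancing these rates in a local blow-up of the amplitude equation near $(0,0,0)$ adapted to the fold-of-cycles scaling $u\sim|y_2|,\ y_1\sim y_2^2$ should produce the sharp dichotomy: escape (a critical transition) for $\tfrac{\partial g_2}{\partial y_2}(0,0)<\tfrac12$ and capture (soft) for $\tfrac{\partial g_2}{\partial y_2}(0,0)\ge\tfrac12$, the constant $\tfrac12$ being inherited from the relation $u=-y_2/2$ on $T$. Making this race rigorous — fixing the correct scaling, controlling the bifurcation delay, and turning the heuristic rate comparison into an actual escape/capture statement with the exact threshold $\tfrac12$ — is the main obstacle, and is considerably harder than the algebraic arguments that sufficed for the pitchfork, transcritical, cusp and Hopf cases; assembling the three cases into the stated equivalence is then immediate.
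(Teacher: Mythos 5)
Your overall architecture matches the paper's proof more closely than you might expect: the paper also works with the Bautin unfolding (citing Kuznetsov) rather than from scratch, establishes the approach segment from $g_1(0,0)>0$ via the slow flow $\dot y=g(0,y)$ on $C^a_0=C_0\cap\{y_1<0\}$, obtains instability of $x=0$ at $y=0$ by reversing time in the Lyapunov argument of Lemma \ref{eq:Baut_lem1} (your $u'=2u^3>0$ is an equivalent shortcut), and then reduces the entire lemma to a single singular-limit criterion: the candidate orbit must enter the fast-subsystem region \emph{without} limit cycles, i.e.\ the complement of the wedge bounded by the fold-of-cycles curve $\mathrm{LPC}=\{y_1=\tfrac14 y_2^2,\ y_2<0\}$ (you have this curve right; the paper's displayed formula contains a typo). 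Your case (a), the line-versus-parabola comparison, is exactly the paper's mechanism made explicit.

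The genuine gap is case (b), and it is a gap of the wrong-approach kind, not merely a missing computation. Definition \ref{defn:ct} is formulated entirely at $\epsilon=0$: whether the Bautin point is a critical transition is decided by the existence of a candidate $\gamma_0$, a concatenation of fast- and slow-subsystem orbits, so no quantity depending on $\epsilon$ can enter the criterion. Your proposed ``race'' between the amplitude growth $\exp\bigl(\tfrac1\epsilon\int 2y_1\,ds\bigr)$ and the drift of $y_2$ is a bifurcation-delay phenomenon for $0<\epsilon\ll1$; it addresses when sample trajectories of the perturbed system escape or get captured, which is a different (and much harder) question than the one the lemma asks, and you concede you have not carried it out — so the threshold $\tfrac12$ is conjectured, not proved. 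The paper instead disposes of (b) by the same singular-limit device as (a): the condition $\partial_{y_2}g_2(0,0)<\tfrac12$ is a condition on the slow flow guaranteeing the candidate enters the no-cycle region, with the constant tied to the quadratic tangency of LPC with the Hopf line. There is also an internal tension you never confront: your own case-(a) computation extends verbatim to (b) — along the through-origin trajectory with $g_2(0,0)=0$ one gets $y_2=\cO(y_1^2)$, hence $y_1\gg y_2^2/4$ for every value of $\partial_{y_2}g_2(0,0)$ — so within the singular frame you set up, the threshold never appears. Rather than locating the slow-flow condition that produces it (the paper's route), you switched frameworks to an $\epsilon>0$ heuristic that the lemma's definition does not admit; as written, the proposal establishes necessity of $g_1(0,0)>0$ and sufficiency in case (a), but not the stated equivalence.
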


\begin{proof}
The critical manifold splits into two 2-dimensional planes $C^a_0=C_0\cap \{y_1<0\}$ and $C^r_0=C_0\cap \{y_1>0\}$ where $C^a_0$ is attracting and $C^r_0$ is repelling. The condition $g_1(0,0)>0$ implies that the slow flow $\dot{y}=g(0,y)$ has trajectories that start in $C^a_0$ and reach $(x,y)=(0,0)$ in finite time. This guarantees the existence of a candidate $\gamma_0$ satisfying (C1) and (C3) of Definition \ref{defn:ct}. The proof of Lemma \ref{eq:Baut_lem1} shows, upon reversal of time in equation \eqref{eq:Bautin_neg_concl}, that $x=0$ is an asymptotically unstable equilibrium point of the fast subsystem at $y=0$ when $l_2=1$. We note from the unfolding of a Bautin bifurcation (see \cite{Kuznetsov}, p.314) that saddle-node bifurcations of limit cycles for the fast subsystem occur on the curve $\text{LPC}:=\{(y_1,y_2)\in\R^2:\frac14y_1^2=y_1,y_2<0\}$. The conditions on the slow flow guarantee that the candidate orbit enters the fast subsystem region without limit cycles and with an unstable equilibrium point; this region is given by
\benn
\left\{(y_1,y_2)\in\R^2:y_2<0, y_1>\frac14y_1^2\right\}\cup\left\{(y_1,y_2)\in\R^2:y_2>0,y_1>0\right\}. \qedhere
\eenn
\end{proof}

The last codimension two bifurcation with two fast variables is the \texttt{Bogdanov-Takens bifurcation} with normal form (\cite{GH}, p.365)
\be
\label{eq:BT_nform}
\begin{array}{lcl}
f_1(x,y)&=&x_2,\\
f_2(x,y)&=&y_1+y_2x_2+x_1^2+sx_1x_2,\\
\end{array}
\ee
where $s=\pm 1$. The critical manifold is $C_0=\left\{(x,y)\in\R^4: x_2=0, x_1=\pm \sqrt{-y_1}\right\}$ so that we always require $y_1\leq 0$. 

\begin{lem}
\label{lem:BT}
The Bogdanov-Takens bifurcation \eqref{eq:BT_nform} is a critical transition for $s=-1$ if and only if $g_2(0,0)>0$ and $g_1(0,0)=0$ and for $s=1$ if and only if (a) $g_1(0,0)>0$ or (b) $g_1(0,0)=0$, $g_2(0,0)>0$, $\frac{\partial g_2}{\partial y_2}(0,0)<-2$ .
\end{lem}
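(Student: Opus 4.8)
The plan is to mimic the strategy used for the cusp and Bautin normal forms: reduce the question to the geometry of the fast-subsystem equilibria as $y=(y_1,y_2)$ varies, identify the attracting part of $C_0$ together with its bifurcation boundaries, and then decide when the reduced slow flow admits a candidate reaching the degenerate point. First I would compute the equilibria of the fast subsystem of \eqref{eq:BT_nform}, namely $x_2=0$ and $x_1=\pm\sqrt{-y_1}$ (so $y_1\leq 0$), and linearize. The Jacobian at such an equilibrium is $\left(\begin{smallmatrix} 0 & 1 \\ 2x_1 & y_2+sx_1 \end{smallmatrix}\right)$, with determinant $-2x_1$ and trace $y_2+sx_1$. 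Hence the branch $x_1=+\sqrt{-y_1}$ is always of saddle type, while $C_0^a:=\{x_1=-\sqrt{-y_1}\}$ has positive determinant and is attracting precisely where its trace is negative, i.e. on $\{y_2<s\sqrt{-y_1}\}$. This records the fold (saddle-node) boundary $\{y_1=0\}$, the Hopf boundary $\{y_2=s\sqrt{-y_1}\}$ (equivalently $\{y_1=-y_2^2\}$), and the linearization needed for Section \ref{sec:variance}. The decisive qualitative difference is that for $s=-1$ the attracting set is the thin cusp $\{-y_2^2<y_1<0,\ y_2<0\}$ tangent to the $y_2$-axis, whereas for $s=+1$ it is the much larger region containing all of $\{y_1<0,\ y_2\leq 0\}$.

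Next I would dispose of the transition-point requirement by analyzing the fast subsystem exactly at the Bogdanov--Takens point $y=0$, where $x=0$ is nilpotent and Proposition \ref{prop:negative} must be checked. Following an initial condition $(x_1,x_2)=(\delta,0)$ with $\delta>0$ gives $x_2'=\delta^2>0$ and then $x_1'=x_2>0$, so the orbit is driven into $\{x_1>0\}$ and escapes; thus $x=0$ is \emph{not} Lyapunov stable for either sign of $s$ and Proposition \ref{prop:negative} does not obstruct a transition. This supplies the fast orbit leaving $p$, so conditions (C2) and (C3) of Definition \ref{defn:ct} hold as soon as a slow segment on $C_0^a$ reaches $p=(0,0)$. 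The whole problem therefore reduces to deciding, for each sign of $s$, whether the reduced slow flow $\dot{y}=g(h_0(y),y)$ on $C_0^a$, with $h_0(y)=(-\sqrt{-y_1},0)$, possesses a trajectory that limits onto the origin while staying in the attracting region and oriented towards it.

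For $s=-1$ the attracting cusp is tangent to the $y_2$-axis, so, exactly as in the cusp argument around \eqref{eq:sf_cusp}, a trajectory can only reach the tip $(0,0)$ tangentially: the direction forces $g_2(0,0)>0$ (so $y_2$ increases towards $0$ through $y_2<0$) and the tangency $\mathrm{d}y_1/\mathrm{d}y_2\to 0$ forces $g_1(0,0)=0$. For $s=+1$ there are two scenarios. If $g_1(0,0)\neq 0$ the slow flow crosses the fold line $\{y_1=0\}$ transversally; since reaching $y_1=0$ from the admissible side $y_1<0$ needs $\dot{y}_1>0$ this forces $g_1(0,0)>0$, and because the attracting equilibrium simply disappears at the saddle-node the transition is automatically hard, giving case (a). If instead $g_1(0,0)=0$ the approach is again tangential with $g_2(0,0)>0$, but now one must in addition verify that the candidate enters the portion of the unfolding where the equilibrium is already unstable and no stable limit cycle born at the Hopf point can capture the trajectory. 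In complete analogy with the proof of Lemma \ref{lem:Bautin} (where the saddle-node-of-cycles curve produced the threshold $\tfrac12$), this is decided by comparing the local second-order shape of the slow trajectory at the origin with the relevant boundary of the Bogdanov--Takens unfolding, the Hopf parabola $y_1=-y_2^2$ whose second derivative equals $-2$; the second-order expansion of $\dot{y}$ then yields the sharp threshold $\tfrac{\partial g_2}{\partial y_2}(0,0)<-2$.

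I expect this last step to be the main obstacle. Pinning down the \emph{exact} constant $-2$, rather than a mere nondegeneracy inequality, requires a careful second-order Taylor expansion of the reduced slow flow at the origin, which is complicated both by the fact that $h_0(y)=-\sqrt{-y_1}$ is only $C^0$ at $y_1=0$ (so the expansion genuinely involves square-root terms) and by the need to invoke the limit-cycle and homoclinic branches of the Bogdanov--Takens unfolding (see \cite{Kuznetsov,GH}) to certify a genuinely hard loss of stability. By contrast, the remaining cases ($s=-1$ and case (a) for $s=+1$) are routine once the attracting region and its cusp/fold geometry have been identified.
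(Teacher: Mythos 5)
Your overall route is the same as the paper's: the paper also computes the linearization on $C_0$ (recorded in \eqref{eq:A0_BT}), reads off $\mathrm{Tr}=y_2\pm s\sqrt{-y_1}$ and $\det=-2(\pm\sqrt{-y_1})$, identifies $C^a_0$ exactly as you do, and for $s=-1$ derives $g_2(0,0)>0$, $g_1(0,0)=0$ from the tangency of the boundary curve $\{y_2=-\sqrt{-y_1}\}$ with the $y_2$-axis, just as in your thin-cusp argument; your transversality discussion for case (a) is likewise consistent. One point where you are weaker than the paper: at the transition point the paper does not only exclude Lyapunov stability but cites the classification of nilpotent equilibria of planar analytic vector fields (Perko; Andronov et al.) to conclude $x=0$ is a \emph{cusp point}, which supplies a separatrix emanating from the origin, i.e.\ the fast segment $\gamma_0(t_j,t_{j+1})$ demanded by (C2)--(C3). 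Your escape computation from $(\delta,0)$ shows instability, which removes the obstruction of Proposition \ref{prop:negative}, but instability alone does not yield an orbit backward-asymptotic to $p$; to get the fast segment starting at $p$ you still need the sector decomposition for analytic planar equilibria, so you should invoke it rather than stop at non-Lyapunov-stability.

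The substantive gap is case (b) for $s=+1$, which you openly leave unfinished. The paper dispatches it via the reversing symmetry $(x_1,x_2,y_1,y_2,t)\mapsto(x_1,-x_2,y_1,-y_2,-t)$, which interchanges $s=\pm1$ (and under which $\partial_{y_2}g_2\mapsto-\partial_{y_2}g_2$), so the threshold $-2$ is transferred from the $s=-1$ geometry rather than computed by an expansion; your proposal does not use this symmetry at all. Moreover, the direct mechanism you sketch is not obviously the right one: when $g_1(0,0)=0$ and the reduced trajectory is written as $y_1=\phi(y_2)$ with $\phi'(0)=0$, its curvature is $\phi''(0)=\frac{1}{g_2(0,0)}\frac{d}{dy_2}\bigl[g_1(h_0(y),y)\bigr]\big|_{y=0}$, so a second-order comparison of the trajectory with the Hopf parabola $y_1=-y_2^2$ naturally produces conditions on derivatives of $g_1$ (including the square-root chain terms through $h_0$ that you correctly flag), not the stated condition on $\partial g_2/\partial y_2(0,0)$. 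So, as written, your plan for case (b) would not land on the lemma's hypothesis; the economical fix is the paper's symmetry reduction to the already-settled $s=-1$ picture (where the boundary parabola of curvature $-2$ indeed lives), analogous to how Lemma \ref{lem:Bautin} obtains its threshold from the saddle-node-of-cycles curve. Everything else in your proposal matches the paper's proof in both structure and conclusions.
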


\begin{proof}
As usual we consider the fast subsystem at $y=0$
\be
\label{eq:csup_point}
\begin{array}{lcl}
x_1'&=&x_2,\\
x_2'&=&x_1^2+sx_1x_2.\\
\end{array}
\ee
The theory of non-hyperbolic equilibria in planar analytic vector fields (\cite{Perko}, p.151; see also \cite{AndronovLeontovichGordonMaier}) implies that $x=0$ is a cusp point. Hence there exists a candidate $\gamma_0$ with a fast-subsystem orbit segment $\gamma_0(t_j,t_{j+1})$ oriented from $\gamma_0(t_j)=(0,0)$ to $\gamma_0(t_{j+1})$. It remains to show when we can approach $(x,y)=(0,0)$ via the slow flow on an attracting critical manifold. The linearization around the critical manifold is
\be
\label{eq:A0_BT}
D_xf|_{C_0}=\left(\begin{array}{cc} 0 & 1 \\ \pm2\sqrt{-y_1} & y_2\pm s\sqrt{-y_1}\\\end{array}\right)
\ee
with $\text{Tr}(D_xf|_{C_0})=y_2\pm s\sqrt{-y_1}$ and $\det(D_xf|_{C_0})=-2(\pm\sqrt{-y_1})$. Hence the critical manifold is attracting if and only if $y_2\pm s\sqrt{-y_1}<0$ and $2(\pm\sqrt{-y_1})<0$. For $s=-1$ this yields the set
\benn
C^a_0=\{(x_1,x_2,y_1,y_2)\in\R^4:y_1<0,x_2=0,x_1=-\sqrt{-y_1},y_2<-\sqrt{-y_1}\}.
\eenn
A candidate orbit starting in $C^a_0$ will reach $(x,y)=(0,0)$ if and only if $g_2(0,0)>0$ and $g_1(0,0)=0$ where the second condition is required since the curve $\{y_2=-\sqrt{-y_1}\}$ approaches the origin tangentially i.e. $\frac{d}{dy_2}(-y_2^2)|_{y=0}=0$. The second case for $s=1$ is similar and follows using the symmetry $(x_1,x_2,y_1,y_2,t)\mapsto (x_1,-x_2,y_1,-y_2,-t)$.
\end{proof}

The linearization for the Bogdanov-Takens bifurcation has already been recorded in \eqref{eq:A0_BT} but notice that we must have the condition $y_2<\mp s\sqrt{-y_1}$ to be on $C^a_0$. The asymptotic result as $y\ra 0$ for the linearization is
\be
\label{eq:A0_BT1}
D_xf|_{C_0\cap C^a_0}=\left(\begin{array}{cc} 0 & 1 \\ \pm2\sqrt{-y_1} & \cO_y(\sqrt{-y_1})\\\end{array}\right).
\ee
The two remaining codimension-two bifurcations (fold-Hopf and Hopf-Hopf) require three and four fast dimensions. The \texttt{fold-Hopf} (or \texttt{Gavrilov-Guckenheimer}) bifurcation has normal form (\cite{Kuznetsov}, p.338)
\be
\label{eq:fH_nform}
\begin{array}{lcl}
f_1(x,y)&=& y_1+x_1^2+s(x_2^2+x_3^2),\\
f_2(x,y)&=& y_2x_2-\omega x_3+\theta x_1x_2-x_1x_3+x_1^2x_2,\\
f_3(x,y)&=& \omega x_2+y_2x_3+x_1x_2+\theta x_1x_3+x_1^2x_3,\\
\end{array}
\ee
where $s=\pm 1$ and $\theta=\theta(y)$ satisfies $\theta(0)\neq 0$ and $\omega\neq 0$. The critical manifold is given by 
\benn
C_0=\{(x,y)\in\R^5:x_2=0=x_3,x_1=\pm\sqrt{-y_1},y_1\leq 0\}.
\eenn

\begin{lem}
\label{lem:fH}
The fold-Hopf bifurcation is a critical transition 
\begin{itemize}
 \item for $\theta(0)>0$, $s=1$ if and only if (a) $g_1(0,0)>0$ or (b) $g_1(0,0)=0$ and $g_2(0,0)>0$,
 \item for $\theta(0)>0$, $s=-1$ if and only if $g_1(0,0)>0$ and $g_2(0,0)<J_2(0,0)$ where $J_2(0,0)$ is the $y_2$-component of the tangent vector to the ``cycle blow-up curve'' (cf. \cite{Kuznetsov}, p.343),
 \item for $\theta(0)<0$, $s=1$ if and only if $g_1(0,0)=0$ and $g_2(0,0)>0$,
 \item for $\theta(0)<0$, $s=-1$ if and only if $g_1(0,0)=0$ and $g_2(0,0)>0$.
\end{itemize}
\end{lem}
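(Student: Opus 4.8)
The plan is to follow the same three-part template used for the cusp, Bautin and Bogdanov--Takens lemmas: (i) reduce the three-dimensional fast subsystem \eqref{eq:fH_nform} to a planar amplitude system, (ii) compute the linearization along $C_0$ to pin down the attracting branch $C^a_0$ and the curves on which it loses normal hyperbolicity, and (iii) decide, case by case, when the slow flow on $C^a_0$ produces a candidate terminating at the origin. First I would pass to the radial variable $\rho^2=x_2^2+x_3^2$ on the Hopf block. A direct computation gives $x_2f_2+x_3f_3=(y_2+\theta x_1+x_1^2)\rho^2$, so the relevant dynamics of \eqref{eq:fH_nform} is governed by the planar system $x_1'=y_1+x_1^2+s\rho^2$, $\rho'=(y_2+\theta x_1+x_1^2)\rho$, which is exactly Kuznetsov's truncated amplitude system for the fold-Hopf point (\cite{Kuznetsov}, p.338--343) whose unfolding I would quote rather than rederive. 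Note that the $x_1$-axis is invariant with $x_1'=x_1^2\geq 0$, so a fast escape toward $x_1\to+\infty$ exists for every choice of $s$ and $\theta(0)$; hence Proposition \ref{prop:negative} never rules out a transition and the entire content of the lemma is the construction of the approaching candidate.

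Next I would record the linearization on $C_0$. At a point $x_2=x_3=0$, $x_1=\pm\sqrt{-y_1}$ the Jacobian $D_xf|_{C_0}$ block-diagonalises into the scalar $2x_1$ and a rotation block with eigenvalues $(y_2+\theta x_1+x_1^2)\pm i(\omega+x_1)$. Thus the branch $x_1=+\sqrt{-y_1}$ is always repelling, while $x_1=-\sqrt{-y_1}$ is attracting precisely when $R(y):=y_2-\theta\sqrt{-y_1}-y_1<0$. Therefore $C^a_0$ is the part of the lower branch below the Hopf curve $\{R=0\}$, and it loses normal hyperbolicity along two curves meeting at the origin: the fold line $\{y_1=0\}$ and the Hopf curve $\{R=0\}$, the latter behaving to leading order like $y_2=\theta(0)\sqrt{-y_1}$ and being tangent to the $y_2$-axis. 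The sign of $\theta(0)$ decides on which side of the $y_2$-axis this curve sits, and hence the shape of $C^a_0$ near the origin; the sign of $s$ decides, through the amplitude system, whether an attracting fast limit cycle is present.

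I would then run the reachability argument separately for the two signs of $\theta(0)$. For $\theta(0)>0$ the Hopf curve lies in $\{y_2>0\}$, so near the origin $C^a_0$ fills essentially the half-plane $\{y_1<0\}$ below a small parabolic cap; a trajectory of the leading-order slow flow $\dot y\approx(g_1(0,0),g_2(0,0))$ terminating at the origin while remaining in $C^a_0$ exists as soon as $g_1(0,0)>0$ (a transverse, fold-type approach), and also when $g_1(0,0)=0$, $g_2(0,0)>0$ (a tangential approach along the Hopf boundary). For $\theta(0)<0$ the Hopf curve lies in $\{y_2<0\}$ and $C^a_0$ is a thin tongue below $y_2=\theta(0)\sqrt{-y_1}$; I would check, exactly as in the cusp and Bogdanov--Takens proofs, that the transverse approach now exits the tongue, so that tangency of the boundary to the $y_2$-axis forces $g_1(0,0)=0$ while arrival from below forces $g_2(0,0)>0$, independently of $s$. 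Any $\partial g_2/\partial y_2$ condition appearing in the degenerate subcases would be produced here by matching the slow trajectory to the curvature of the boundary, precisely as in Lemmas \ref{lem:Bautin} and \ref{lem:BT}.

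The hard part is the case $s=-1$ with $\theta(0)>0$, where the amplitude system carries a branch of limit cycles. Here a fold-type approach ($g_1(0,0)>0$) is not by itself sufficient: the candidate must land in the region of the $(y_1,y_2)$-plane where the fast orbit genuinely escapes along the fold direction instead of being captured by a stable fast limit cycle. Using the unfolding in (\cite{Kuznetsov}, p.343) I would locate the ``cycle blow-up'' curve bounding this escape region, compute its tangent vector at the origin and call its $y_2$-component $J_2(0,0)$; the requirement that the incoming slow direction $(g_1(0,0),g_2(0,0))$ point into the escape region then reads $g_2(0,0)<J_2(0,0)$. Establishing that this global bifurcation curve is the genuine obstruction---rather than a Hopf, torus or local heteroclinic bifurcation of the amplitude system---and that its tangent is well defined, is the only step that goes beyond the elementary phase-plane and linearization arguments used elsewhere, and is where I would expect to rely most heavily on the known fold-Hopf bifurcation diagrams.
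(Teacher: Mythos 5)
Your proposal is correct and takes essentially the same route as the paper's own (deliberately sketchy) proof: passage to cylindrical coordinates and the planar amplitude system, instability of the origin in the fast subsystem to guarantee the escaping fast segment, the linearization \eqref{eq:A0_fH} identifying $C^a_0=C_0\cap\{x_1=-\sqrt{-y_1},\,y_2<\theta\sqrt{-y_1}\}$, and the slow-flow conditions read off from Kuznetsov's unfolding. If anything, you supply more detail than the paper, which disposes of the case-by-case reachability analysis (the tangency argument for $\theta(0)<0$ and the cycle blow-up curve for $s=-1$, $\theta(0)>0$) with a single citation to the unfolding in (\cite{Kuznetsov}, p.339--345).
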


\begin{proof}
The same techniques as before will apply so we just sketch the proof. The fast subsystem at $y=0$ is
\be
\label{eq:fH_fss}
\begin{array}{lcl}
x_1'&=& x_1^2+s(x_2^2+x_3^2),\\
x_2'&=& -\omega x_3+\theta x_1x_2-x_1x_3+x_1^2x_2,\\
x_3'&=& \omega x_2+x_1x_2+\theta x_1x_3+x_1^2x_3,\\
\end{array}
\ee
Changing to cylindrical coordinates $(x_2,x_3)=(r\cos\phi,r\sin \phi)$ in \eqref{eq:fH_fss} and neglecting the angular component $\phi$, since it is always a neutral direction with respect to attraction and repulsion for the critical manifold of equilibrium points, we get a two-dimensional system
\be
\label{eq:fH_fss1}
\begin{array}{lcl}
x_1'&=& x_1^2+sr^2,\\
r'&=& r(\theta x_1+x_1^2).\\
\end{array}
\ee
It can be checked that the origin $(x_1,r)=(0,0)$ is unstable for \eqref{eq:fH_fss}. Therefore we can find a candidate that leaves the bifurcation point in a fast direction. The attracting part of the critical manifold is computed from the linearization 
\be
\label{eq:A0_fH}
D_xf|_{C_0}=\left(\begin{array}{ccc} \pm 2\sqrt{-y_1} & 0 & 0 \\ 
0 & y_2\pm\theta\sqrt{-y_1} & -\omega \mp\sqrt{-y_1} \\
0 & \omega\pm \sqrt{-y_1} & y_2\pm\theta\sqrt{-y_1}\\ \end{array}\right)
\ee
and is given by $C^a_0=C_0\cap \{x=-\sqrt{-y_1},y_2<\theta\sqrt{-y_1}\}$. The conditions on the slow flow $\dot{y}=g=(g_1,g_2)$ can be derived from the unfolding of the fold-Hopf bifurcation (see \cite{Kuznetsov}, p.339-345).
\end{proof}

The linearization is given by \eqref{eq:A0_fH}; we note that the condition of the approach via $C^a_0$ means that the leading order approximation to $D_xf|_{C_0\cap C_0^a}$ as $(y_1,y_2)\ra (0^-,0)$ is given by
\be
\label{eq:A0_fH1}
D_xf|_{C_0\cap C_0^a}=\left(\begin{array}{ccc} \pm 2\sqrt{-y_1} & 0 & 0 \\ 
0 & \cO_y(\sqrt{-y_1}) & -\omega \\
0 & \omega & \cO_y(\sqrt{-y_1})\\ \end{array}\right).
\ee
As the last case we are going to consider is the \texttt{Hopf-Hopf bifurcation}. We shall not discuss the complicated unfolding (\cite{Kuznetsov}, p.351-370; \cite{GH}, p.396-411) in detail to show when the Hopf-Hopf bifurcation is a critical transition. A normal form in polar coordinates $(r_1,r_2,\theta_1,\theta_2)=(r,\theta)$ is (\cite{Kuznetsov}, p.358)
\be
\label{eq:HH_nform}
\begin{array}{lcl}
f_1(r,\theta,y)&=& r_1(y_1+p_{11}r_1^2+p_{12}r_2^2+s_1r_2^4),\\
f_2(r,\theta,y)&=& r_2(y_2+p_{21}r_1^2+p_{22}r_2^2+s_2r_1^4),\\
f_3(r,\theta,y)&=& \omega_1,\\
f_4(r,\theta,y)&=& \omega_2,\\
\end{array}
\ee
where $\omega_{1,2}$ are the imaginary parts of the eigenvalues at the bifurcation point $y=0$; $p_{ij}$ and $s_{1,2}$ are further parameters. We note that all parameters also depend on the slow variables $y$ but are usually assumed to be non-zero at the bifurcation point. Observe from \eqref{eq:HH_nform} that the critical manifold is
\benn
C_0=\{(r,\theta,y)\in\R^4\times \R^2:r_1=0=r_2\}=\{(x,y)\in\R^4\times \R^2:x_j=0\text{ for $j=1,2,3,4$}\}.
\eenn  
To study whether candidate orbits can leave the fast subsystem for $y=0$ we would have to study the nonlinear stability of the origin depending on the parameters. It is not difficult to see that the Hopf-Hopf bifurcation is not always a critical transition depending on parameter values but there are cases when it is a critical transition. Instead of providing this detailed study (which can be inferred from the unfoldings in \cite{Kuznetsov}) we shall only state one important linearization
\be
\label{eq:A0_HH}
D_xf(x,y)|_{C_0\cap C_0^a}=
\left(\begin{array}{cccc}y_1 & -\omega_1 & 0 & 0 \\ \omega_1 & y_1 & 0 & 0 \\0 & 0 & y_2 & -\omega_2 \\ 0 & 0 & \omega_2 & y_2\\\end{array}\right).
\ee
It will always be assumed for \eqref{eq:A0_HH} that $\omega_{1,2}\neq 0$; note that we also exclude resonances $k\omega_1=l\omega_2$ for $k+l\leq3$ as non-resonance conditions are non-degeneracy conditions for the Hopf-Hopf bifurcation ({cf.} assumption (A2)). We remark that the main bifurcation phenomena of interest near a Hopf-Hopf bifurcation are global orbits (limit cycles and tori) which would not be captured by our local analysis anyway.

\begin{table}[htbp]
 \centering
\begin{tabular}{|l|l|l|c|} 
\hline
\textbf{Name} & \textbf{N.-Form} & \textbf{Critical, if...} & \textbf{$D_xf|_{C_0\cap C^a_0}=:A_0(y)$} \\
\hline
Fold & Eq. \eqref{eq:nf_fold} & $g>0$ & $ -2\sqrt{-y}$ \\
\hline
Pitchfork & Eq. \eqref{eq:nf_pitchfork} & $s=1$, $g>0$ & $y$ \\
\hline
Transcr. & Eq. \eqref{eq:nf_transcritical} & $g\neq 0$ & $y$ \\
\hline
Hopf & Eq. \eqref{eq:nf_Hopf} & $l_1>0$, $g>0$ & $\left(\begin{array}{cc}y & -1 \\ 1 & y \\ \end{array}\right)$ \\
\hline
Cusp & Eq. \eqref{eq:nf_cusp} & $s=-1$, $g_1=0$, $g_2>0$  & $\cO_y(y_2)$ \\
\hline
Bautin & Eq. \eqref{eq:nf_Bautin} & $\begin{array}{l} l_2>0,~ g_1>0 \\ \text{and (a) } g_2\neq 0 \text{ or}\\ \text{(b) }g_2=0,~\partial_{y_2}g_2<1/2\\  \end{array}$ & $\left(\begin{array}{cc}y_1 & -1 \\ 1 & y_1 \\ \end{array}\right)$ \\
\hline
Bog.-Tak. & Eq. \eqref{eq:BT_nform} & $\begin{array}{l}s=-1, ~g_1>0,~ g_2=0 \\ s=1 \text{ and (a) }g_1>0 \text{ or}\\\text{(b) }g_1=0,g_2>0,\partial_{y_2}<-2 \\ \end{array}$& $\left(\begin{array}{cc} 0 & 1 \\ \pm2\sqrt{-y_1} & \cO_y(\sqrt{-y_1})\\ \end{array}\right)$ \\
\hline
Fold-Hopf & Eq. \eqref{eq:fH_nform} & $\begin{array}{l}\theta<0, ~g_1>0,~ g_2=0 \\ \theta>0,s=1 \text{ and (a) } g_1>0, \\ \text{or (b) }g_1=0,g_2>0\\ \theta>0,s=-1,g_1>0,g_2<J_2\\  \end{array}$ & $\left(\begin{array}{ccc} \pm 2\sqrt{-y_1} & 0 & 0 \\ 
0 & \cO_y(\sqrt{-y_1}) & -\omega \\
0 & \omega & \cO_y(\sqrt{-y_1})\\ \end{array}\right)$ \\
\hline
Hopf-Hopf & Eq. \eqref{eq:HH_nform} & $\begin{array}{l}\text{special case only} \\ \end{array}$ & $\left(\begin{array}{cccc}y_1 & -\omega_1 & 0 & 0 \\ \omega_1 & y_1 & 0 & 0 \\0 & 0 & y_2 & -\omega_2 \\ 0 & 0 & \omega_2 & y_2\\\end{array}\right)$ \\
\hline
\end{tabular}
\caption{\label{tab:det_res}Results for fast subsystem bifurcations. The additional hypotheses on the slow flow $\dot{y}=g(x,y)$ at $(x,y)=(0,0)$ are abbreviated and we always understand $g_j$ as $g_j(0,0)$ for $j=1,2$ and $g$ as $g(0,0)$ in this table. The Hopf-Hopf bifurcation has not been analyzed in detail and only a particular case is stated. The last column records the linearization around the attracting branch of the critical manifold.}
\end{table}

Having finished the exercise it is now clear which \emph{local} fast subsystem bifurcation points are critical transitions under suitable slow flow conditions. We record the results developed in Lemmas \ref{lem:fold}-\ref{lem:fH} as well as the resulting linearizations $D_xf|_{C_0\cap C^a}$ in Table \ref{tab:det_res} where we introduced the shorthand notation $A_0(y):=D_xf(h_0(y),y)=D_xf|_{C_0\cap C^a_0}$. 

Let us point out again that the classification results are for the singular limit $\epsilon=0$. Detailed unfoldings for the deterministic case for $\epsilon>0$ are known for the fold, pitchfork, transcritical and Hopf bifurcations \cite{KruSzm1,KruSzm3,KruSzm4,Neishtadt1}. Partial results are available for the Bogdanov-Takens bifurcation \cite{Chiba1} and the cusp \cite{BroerKaperKrupa} is work in progress. Section \ref{sec:conclusions} provides an overview where future work is needed.  

\section{Sample Paths and Moments for Stochastic Fast-Slow Systems}
\label{sec:SDE_fs}

Let $\{W_s\}_{s\geq 0}$ be a $k$-dimensional Brownian motion on a probability space $(\Omega,\mathcal{F},\P)$. Consider the \texttt{fast-slow 
stochastic differential equation (fast-slow SDE)}
\be
\label{eq:gen_SDE}
\begin{array}{lcl}
dx_s&=&\frac1\epsilon f(x_s,y_s)ds+\frac{\sigma}{\sqrt{\epsilon}}F(x_s,y_s)dW_s,\\
dy_s&=& g(x_s,y_s)ds.\\
\end{array}
\ee
which is understood as an It\^{o}-SDE \cite{Oksendal}. Noise acting on the slow variables $y$ will not be considered explicitly but it is implicitly included in all of our results as it appears as a higher-order term (\cite{BerglundGentz}, p.145; \cite{KuehnCT1}, p.1026). In addition to the assumptions (A0)-(A1) that hold for the deterministic part of \eqref{eq:gen_SDE} we require the following hypothesis:

\begin{itemize}
 \item[(A3)] $F\in C^2(\R^{m+n},\R^{m\times k})$ and the \texttt{noise level} $\sigma=\sigma(\epsilon)$ depends continuously on $\epsilon$.
 \item[(A4)] We consider \texttt{small noise} with $\lim_{\epsilon \ra 0}\sigma(\epsilon)=0$.
\end{itemize}

To understand the effect of a deterministic smooth invertible normal form transformation (coordinate change) $u(x,y)=(X,Y)\in\R^m\times \R^n$, with $u\in C^r(\R^m\times \R^n,\R^m\times \R^n)$ we need the following result which directly follows from It\^{o}'s formula (\cite{Oksendal}, p.44).

\begin{lem}
Consider the fast variable equation for \eqref{eq:gen_SDE}
\benn
dx_s=\frac1\epsilon f(x_s,y_s)ds+\frac{\sigma}{\sqrt{\epsilon}}F(x_s,y_s)dW_s
\eenn
then, using the notations $z_s=(x_s,y_s)$ and $Z_s=(X_s,Y_s)$, we have
\bea
\label{eq:stoch_normal}
dX^{(i)}_s&=&\left[ \frac{1}{\epsilon}\sum_{j=1}^{m} \frac{\partial u^{(i)}}{\partial x_j}f^{(j)}(u^{-1}(Z_s))+\sum_{j=1}^n \frac{\partial u^{(i)}}{\partial y_j} g^{(j)}(u^{-1}(Z_s))+\cO\left(\frac{\sigma^2}{\epsilon}\right)\right]ds+\frac{\sigma}{\sqrt\epsilon}F^{(i)}(u^{-1}(Z_s))dW_s\nonumber\\
&=:& \left[\frac{1}{\epsilon}\tilde{f}^{(i)}(X_s,Y_s)+\cO(1)+\cO\left(\frac{\sigma^2}{\epsilon}\right)\right]ds+\frac{\sigma}{\sqrt\epsilon}F^{(i)}(X_s,Y_s)dW_s
\eea
where superscripts $(i), (j)$ denotes the $i$-th resp. $j$-th row/component.
\end{lem}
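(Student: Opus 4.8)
The plan is to apply the multidimensional It\^o formula to the scalar function $u^{(i)}$ evaluated along the diffusion $z_s=(x_s,y_s)$, and then simply to read off the order in $\epsilon$ and $\sigma$ of each resulting term. First I would record that, by \eqref{eq:gen_SDE}, $z_s$ solves a system whose drift vector is $b=(\frac1\epsilon f,g)$ and whose diffusion matrix has fast block $\frac{\sigma}{\sqrt\epsilon}F$ together with a vanishing slow block, since the slow equation $dy_s=g(x_s,y_s)\,ds$ carries no Brownian increment. For It\^o's formula to apply in the required form one needs $u\in C^2$, which is covered by (A1) for the admissible range of $r$.

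The second step is to expand the first-order part of It\^o's formula. Splitting the gradient of $u^{(i)}$ into its $x$- and $y$-blocks gives $\sum_j \frac{\partial u^{(i)}}{\partial x_j}\,dx_j+\sum_j \frac{\partial u^{(i)}}{\partial y_j}\,dy_j$. Substituting the two equations of the system, the $x$-block contributes the drift $\frac1\epsilon\sum_j \frac{\partial u^{(i)}}{\partial x_j}f^{(j)}$ together with the martingale part $\frac{\sigma}{\sqrt\epsilon}\sum_j\frac{\partial u^{(i)}}{\partial x_j}F^{(j)}\,dW_s$, while the $y$-block contributes only the drift $\sum_j\frac{\partial u^{(i)}}{\partial y_j}g^{(j)}$ and no martingale term. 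These are exactly the $\frac1\epsilon\tilde f^{(i)}$ and $\cO(1)$ pieces, once one sets $\tilde f^{(i)}:=\sum_j \frac{\partial u^{(i)}}{\partial x_j}f^{(j)}$ for the transformed fast field and records the (Jacobian-weighted) diffusion coefficient as $F^{(i)}$ in the new coordinates; since only the order $\cO(\sigma/\sqrt\epsilon)$ of the martingale part matters downstream, this relabeling is harmless.

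Third, I would isolate the It\^o correction $\frac12\sum_{k,l}\frac{\partial^2 u^{(i)}}{\partial z_k\partial z_l}\,d\langle z_k,z_l\rangle$. Because only the fast variables carry a Brownian part, the quadratic covariation $d\langle z_k,z_l\rangle$ vanishes whenever an index is slow, and for two fast indices one has $d\langle x_j,x_l\rangle=\frac{\sigma^2}{\epsilon}(FF^{\top})_{jl}\,ds$ using $d\langle W^{(a)},W^{(b)}\rangle=\delta_{ab}\,ds$. Hence the correction equals $\frac{\sigma^2}{2\epsilon}\sum_{j,l}\frac{\partial^2 u^{(i)}}{\partial x_j\partial x_l}(FF^{\top})_{jl}\,ds$, which is precisely the $\cO(\sigma^2/\epsilon)$ drift term in \eqref{eq:stoch_normal}.

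What little remains to check is the uniformity of the $\cO$-symbols rather than any genuine difficulty: the $\cO(1)$ and $\cO(\sigma^2/\epsilon)$ bounds require the first and second derivatives of $u$, the field $g$, and the entries of $FF^{\top}$ to be bounded independently of $\epsilon$ and $\sigma$. This follows by restricting to the compact domain $\cD(\epsilon)$ of (A0) and invoking the smoothness hypotheses $u\in C^r$ with $r\ge 2$, $g\in C^2$ and $F\in C^2$ from (A1) and (A3); on a compact set these yield the required uniform bounds, and $\sigma(\epsilon)$ is controlled through (A4). Collecting the three contributions and substituting $z_s=u^{-1}(Z_s)$ to express everything in the new coordinates then gives \eqref{eq:stoch_normal}.
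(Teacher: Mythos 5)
Your proposal is correct and takes essentially the same route as the paper, which obtains \eqref{eq:stoch_normal} as a direct application of the multidimensional It\^{o} formula (the cited result in \O{}ksendal, p.~44); your computation simply makes explicit the details left to the reader: the first-order drift split into fast and slow blocks, the $\cO(\sigma^2/\epsilon)$ It\^{o} correction coming from the fast--fast quadratic covariation, and the harmless relabeling of the Jacobian-weighted noise coefficient as $F^{(i)}$, with uniformity of the order symbols on the compact domain of (A0).
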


Since $g(x_s,y_s)=\cO(1)$ is smaller than the first term, the only term that could be of leading order and obstruct the transformation to normal form for the deterministic part of \eqref{eq:gen_SDE} is of order $\cO(\sigma^2/\epsilon)$. By (A4) we have $\sigma^2(\epsilon)/\epsilon\ll 1/\epsilon$ which implies that the third term is also of higher-order after the normal form transformation in comparison to the deterministic $\cO(1/\epsilon)$-term. We now \emph{formally truncate} \eqref{eq:stoch_normal} by discarding the two terms of order lower than $\cO(1/\epsilon)$ as well as the polynomial terms appearing in $\tilde{f}^{(i)}(X_s,Y_s)$ which are of higher-order than the leading normal form terms ({e.g.}~for the fold $X^2-Y+\cO(Y^2,XY,X^3)+\cO(\epsilon)+\cO(\sigma^2/\epsilon)\approx X^2-Y$). On the basis of this formal truncation we now work with \eqref{eq:gen_SDE} where $f$ is chosen from the set of deterministic normal forms discussed in Section \ref{sec:nforms}. There are several interesting remarks regarding the formal truncation; see also Section 8.\\ 

\textit{Remark 1:}  In the deterministic case on the fast time scale, discarding higer-order polynomial and $\cO(\epsilon)$ terms is well understood for \emph{generic} fast subsystem bifurcations as shown {e.g.}~in (\cite{SzmolyanWechselberger1}, Proposition 2.1, Section 4.1; \cite{KruSzm3}, equation (2.5), section 2.4). Intuitively this is also clear since small perturbations do not change the unfolding of 1- or 2-parameter generic bifurcations for general smooth vector fields \cite{Wiggins}.\\

\textit{Remark 2:} For the deterministic ($\sigma=0$) pitchfork and transcritical bifurcations, which are not generic for general smooth vector fields, the $\cO(1)$-term in \eqref{eq:stoch_normal} is relevant as shown {e.g.}~in (\cite{KruSzm4}, Lemma 2.1, Theorem 2.1) in a region of the type (R2) near the singularity. The stochastic early-warning signs in a normally hyperbolic attracting region, such as (R1), are not expected to depend upon these terms (see the discussion of the attracting regime in \cite{BerglundGentz6}) but we do not verify this here and work with the formal truncation.\\

\textit{Remark 3:} It might be possible to weaken the assumption (A4) and to give a rigorous proof for a suitable 'equivalence' of a given SDE and its normal form. For the deterministic case, topological equivalence is known but for the stochastic case one needs different concepts such as \texttt{random normal form transformations} as suggested by Arnold and co-workers \cite{ArnoldRDS}.\\

Once the SDE \eqref{eq:gen_SDE} has been transformed into normal form we study \texttt{sample paths} $(x_s,y_s)$ that solve \eqref{eq:gen_SDE} as suggested in \cite{BerglundGentz}. By (A0) there exists a deterministic attracting slow manifold 
\benn
C^a_\epsilon=\{(x,y)\in\R^m\times \R^n:x=h_\epsilon(y)\}
\eenn
for $h_\epsilon:\cD_y\ra \cD_x$. The deviation of sample paths from this deterministic slow manifold is $\xi_s=x_s-h_\epsilon(y_s)$ and the variational SDE for $\xi_s$ is
\bea
\label{eq:deviate_SDE}
d\xi_s&=&dx_s-D_yh_\epsilon(y_s)~dy_s \\
&=& \frac1\epsilon\left[ f(h_\epsilon(y_s)+\xi_s,y_s)-\epsilon D_yh_\epsilon(y_s)~g(h_\epsilon(y_s)+\xi_s,y_s)\right]ds+\frac{\sigma}{\sqrt\epsilon}F(h_\epsilon(y_s)+\xi_s,y_s) dW_s.  \nonumber
\eea
Let $y_s^{\det}$ denote the deterministic solution of \eqref{eq:gen_SDE} (i.e. a solution for $\sigma=0$). For $\xi_s=0$ the drift term in \eqref{eq:deviate_SDE} satisfies the \texttt{invariance equation} \cite{ZagarisKaperKaper} for a slow manifold
\be
\label{eq:inv2}
f(h_\epsilon(y^{\det}_s),y^{\det}_s)-\epsilon D_yh_\epsilon(y^{\det}_s)~g(h_\epsilon(y^{\det}_s),y^{\det}_s)=0.
\ee
Linearizing \eqref{eq:deviate_SDE} around $\xi_s=0$ and replacing $y_s$ by $y_s^{\det}$ yields a lowest-order system for the process $(\xi^l_s,y_s)$ given by
\be
\label{eq:lin_SDE}
\begin{array}{lcl}
d\xi^l_s &=& \frac1\epsilon [D_xf(h_\epsilon(y^{\det}_s),y^{\det}_s)-\epsilon D_yh_\epsilon(y^{\det}_s)~D_xg(h_\epsilon(y^{\det}_s),y^{\det}_s)]\xi^l_s ds\\
&&+\frac{\sigma}{\sqrt\epsilon} F(h_\epsilon(y^{\det}_s),y^{\det}_s)dW_s,\\
dy^{\det}_s&=&g(h_\epsilon(y^{\det}_s),y^{\det}_s)ds.
\end{array}
\ee 
For notational simplicity we let 
\bea
A_\epsilon(y^{\det}_s)&:=&D_xf(h_\epsilon(y^{\det}_s),y^{\det}_s)-\epsilon D_yh_\epsilon(y^{\det}_s)~D_xg(h_\epsilon(y^{\det}_s),y^{\det}_s)\label{eq:Aeps_def},\\
F_\epsilon(y^{\det}_s)&:=&F(h_\epsilon(y^{\det}_s),y^{\det}_s)\label{eq:Feps_def}.
\eea
Note carefully that for $\epsilon=0$ we get the matrix $A_0(y)=D_xf(h_0(y),y)$ which is precisely the linearization recorded in Table \ref{tab:det_res}. We shall always assume that initial conditions for \eqref{eq:lin_SDE} are deterministic and given by $(\xi^l_0,y_0)=(0,y_0)$ which corresponds to starting on the deterministic slow manifold. Now we can state an important result about the covariance $\text{Cov}(\xi^l_s)$ of the linearized process.

\begin{lem}[\cite{BerglundGentz}, p.146-147]
\label{lem:BG_lem}
Let $X_s:=\sigma^{-2}\text{Cov}(\xi^l_s)$ then $X_s$ satisfies a fast-slow ODE
\be
\label{eq:lin_fs_ODE}
\begin{array}{rcl}
 \epsilon \dot{X}&=&A_\epsilon(y)X+XA_\epsilon(y)^T+F_\epsilon(y)F_\epsilon(y)^T,\\
 \dot{y}&=& g(h_\epsilon(y),y).
\end{array}
\ee
Furthermore, for $0<\epsilon\ll 1$, the critical manifold for \eqref{eq:lin_fs_ODE} is attracting for $y\in \cD_y$ and given by
\benn
\cC_0=\{X\in \R^{m\times m}:A_0(y)X+XA_0(y)^T+F_0(y)F_0(y)^T=0\}.
\eenn
\end{lem}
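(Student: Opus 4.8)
The plan is to treat the two assertions separately. For the covariance equation I would exploit that $\xi^l_s$ solves the \emph{linear} SDE \eqref{eq:lin_SDE}, whose coefficients depend on time only through the deterministic path $y^{\det}_s$. Since the initial datum $(\xi^l_0,y_0)=(0,y_0)$ is deterministic and the drift is linear in $\xi^l$, the mean $\E[\xi^l_s]$ solves the homogeneous linear ODE $\epsilon\,\frac{d}{ds}\E[\xi^l_s]=A_\epsilon(y^{\det}_s)\E[\xi^l_s]$ with zero initial data, so $\E[\xi^l_s]\equiv 0$ and $\text{Cov}(\xi^l_s)=\E[\xi^l_s(\xi^l_s)^T]$.

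First I would apply the It\^o product rule to the matrix-valued process $\xi^l_s(\xi^l_s)^T$, componentwise. The two drift contributions from $d\xi^l_s$ produce $\frac1\epsilon\big(A_\epsilon\,\xi^l_s(\xi^l_s)^T+\xi^l_s(\xi^l_s)^TA_\epsilon^T\big)$, while the quadratic cross-variation term $d\xi^l_s\,(d\xi^l_s)^T$ contributes $\frac{\sigma^2}{\epsilon}F_\epsilon F_\epsilon^T\,ds$ via $dW_s\,dW_s^T=I\,ds$. Taking expectations annihilates the It\^o-integral martingale terms and yields the matrix Lyapunov differential equation
\be
\frac{d}{ds}\text{Cov}(\xi^l_s)=\frac1\epsilon\Big(A_\epsilon(y^{\det}_s)\,\text{Cov}(\xi^l_s)+\text{Cov}(\xi^l_s)\,A_\epsilon(y^{\det}_s)^T\Big)+\frac{\sigma^2}{\epsilon}F_\epsilon(y^{\det}_s)F_\epsilon(y^{\det}_s)^T.
\ee
Substituting $X_s=\sigma^{-2}\text{Cov}(\xi^l_s)$ and multiplying by $\epsilon$ gives exactly the fast equation of \eqref{eq:lin_fs_ODE}; the slow equation is just the deterministic $y$-dynamics already present in \eqref{eq:lin_SDE}. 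This settles the first claim.

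For the second assertion I would pass to the singular limit $\epsilon=0$ of \eqref{eq:lin_fs_ODE}, which forces the algebraic constraint $A_0(y)X+XA_0(y)^T+F_0(y)F_0(y)^T=0$; this is precisely the defining equation of $\cC_0$. For fixed $y\in\cD_y$ the associated fast subsystem is the affine ODE $X'=\cL_y(X)+F_0(y)F_0(y)^T$ on the space of (symmetric) matrices, where $\cL_y(X):=A_0(y)X+XA_0(y)^T$ is the Lyapunov operator. The key linear-algebra fact is that the eigenvalues of $\cL_y$ are exactly the pairwise sums $\lambda_i+\lambda_j$ of the eigenvalues $\lambda_1,\dots,\lambda_m$ of $A_0(y)$. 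Because $A_0(y)=D_xf(h_0(y),y)$ is the linearization along the \emph{attracting} branch $C^a_0$ (cf.\ Table \ref{tab:det_res}), every $\lambda_i$ has strictly negative real part for $y\in\cD_y$; hence $\text{Re}(\lambda_i+\lambda_j)<0$ for all $i,j$. Consequently $\cL_y$ is invertible---so the Lyapunov equation has the unique solution $X^*(y)=\int_0^\infty e^{A_0(y)t}F_0(y)F_0(y)^Te^{A_0(y)^Tt}\,dt$, exhibiting $\cC_0$ as a graph $X=X^*(y)$ over $\cD_y$---and Hurwitz, so $X^*(y)$ is an exponentially stable equilibrium of the fast subsystem. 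Thus $\cC_0$ is normally hyperbolic and attracting, and Fenichel's Theorem \ref{thm:fenichel1} provides the corresponding attracting slow manifold for $0<\epsilon\ll1$.

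I expect the main obstacle to be the spectral analysis of the Lyapunov operator rather than the It\^o computation: one must verify the eigenvalue formula $\lambda_i+\lambda_j$ and keep track that the physically relevant state space is the symmetric matrices (an invariant subspace of the flow, on which $\cL_y$ still has negative-real-part spectrum), then connect this cleanly to the hypothesis that $A_0(y)$ is Hurwitz on the attracting branch supplied by (A0) and Section \ref{sec:nforms}. A minor additional care is needed to ensure $A_0(y)$ remains Hurwitz \emph{uniformly} over the compact domain $\cD_y$, which is what makes $\cC_0$ attracting throughout $\cD_y$ rather than merely pointwise.
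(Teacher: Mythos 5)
Your proposal is correct and follows essentially the same route as the paper's source: the paper gives no in-text proof (by its stated convention, the citation to \cite{BerglundGentz}, p.~146--147, means statement and proof are found there), and the argument in that reference is precisely yours --- It\^{o}'s product rule applied to $\xi^l_s(\xi^l_s)^T$ yielding the matrix Lyapunov ODE for $\text{Cov}(\xi^l_s)$, then attractivity of $\cC_0$ from the fact that the Lyapunov operator $X\mapsto A_0(y)X+XA_0(y)^T$ has spectrum $\{\lambda_i+\lambda_j\}$ and is therefore Hurwitz, uniformly on the compact $\cD_y$, because $A_0(y)$ is Hurwitz along the attracting branch guaranteed by (A0). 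Your added care about the symmetric matrices as an invariant subspace and the uniformity over $\cD_y$ is exactly the right bookkeeping, and Fenichel's Theorem then supplies the slow manifold $X=H_\epsilon(y)$ as claimed.
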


Fenichel's Theorem provides an associated slow manifold $\cC_\epsilon=\{X=H_\epsilon(y)\}$ for $H_\epsilon:\R^n\ra \R^{m\times m}$. Assuming that the matrix $H_\epsilon(y)$ is invertible and that the operator norm $\|H_\epsilon^{-1}(y)\|$ is uniformly bounded for $y\in \cD_y$ one can define the \texttt{covariance neighborhood}
\benn
\cB(r):=\left\{(x,y)\in\cD:[x-h_\epsilon(y)]^T\cdot H_\epsilon(y)^{-1}[x-h_\epsilon(y)]<r^2\right\}.
\eenn
Define the \texttt{first-exit time} of the original process $(x_s,y_s)$, starting at $s=s_0$, from a set $\cA$ as
\benn
\tau_\cA:=\inf\{s\in[s_0,\infty):(x_s,y_s)\notin \mathcal{A},(x_0,y_0)\in \mathcal{A})\}
\eenn
where $\cA$ is chosen so that $\tau_\cA$ is a stopping time wrt the filtration generated by $\{(x_s,y_s)\}_{s\geq s_0}$.

\begin{thm}[\cite{BerglundGentz1}, p.149-150]
\label{thm:BG}
Sample paths stay inside $\cB(r)$ with high probability. More precisely, there exists $K(s,\epsilon,\sigma)$ and $\kappa>0$ such that $\P\left\{\tau_{\cB(r)}<\min(s,\tau_{\cD_y})\right\}\leq K(s,\epsilon,\sigma)e^{-\kappa r^2/(2\sigma^2)}$, where the pre-factor $K(s,\epsilon,\sigma)$ grows at most polynomially in its arguments as $(\epsilon,\sigma)\ra (0,0)$ and $s\ra \I$. 
\end{thm}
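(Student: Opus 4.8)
The plan is to establish the estimate first for the linearized Gaussian process $\xi^l_s$ from \eqref{eq:lin_SDE} and then to control the nonlinear corrections on the event that the path has not yet left $\cB(r)$. First I would rewrite the deviation SDE \eqref{eq:deviate_SDE} as
\[
d\xi_s = \frac{1}{\epsilon}\left[A_\epsilon(y_s)\xi_s + b(\xi_s,y_s)\right]\,ds + \frac{\sigma}{\sqrt\epsilon}F_\epsilon(y_s)\,dW_s,
\]
where the invariance equation \eqref{eq:inv2} guarantees that the constant term vanishes and $b(\xi,y)=\cO(\|\xi\|^2)$ collects the quadratic and higher remainder of the drift. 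Inside $\cB(r)$ one has $\|\xi\|=\cO(r)$ (using that $H_\epsilon$ is bounded on the compact domain $\cD_y$), so $b$ is dominated by a factor $\cO(r)$ times the linear term; this is precisely why the estimate must be localized to $\cB(r)$.

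For the linear part the explicit solution $\xi^l_s = \frac{\sigma}{\sqrt\epsilon}\int_{s_0}^s U(s,u)F_\epsilon(y^{\det}_u)\,dW_u$, with $U(s,u)$ the principal solution of $\epsilon\dot\eta=A_\epsilon(y^{\det})\eta$, shows that $\xi^l_s$ is centered Gaussian with covariance $\sigma^2 X_s$; Lemma \ref{lem:BG_lem} together with Fenichel's Theorem gives $X_s = H_\epsilon(y^{\det}_s)+\cO(\epsilon)$. Hence at a fixed time the normalized quadratic form $\sigma^{-2}(\xi^l_s)^T H_\epsilon(y^{\det}_s)^{-1}\xi^l_s$ is, to leading order, $\chi^2_m$-distributed, and a direct Gaussian tail estimate already yields $\P\{\xi^l_s\notin\cB(r)\}\le C\,e^{-\kappa r^2/(2\sigma^2)}$ for some $\kappa>0$.

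The real work is to upgrade this fixed-time bound to one holding for the whole path up to $\min(s,\tau_{\cD_y})$ with only a polynomial prefactor. For this I would construct an exponential supermartingale $M_u=\exp\{\tfrac{\gamma}{2\sigma^2}(\xi^l_u)^T H_\epsilon(y^{\det}_u)^{-1}\xi^l_u - R_u\}$ with a suitable compensator $R_u$ and a weight $\gamma<1$, feeding the Lyapunov relation $A_0 H_0 + H_0 A_0^T + F_0F_0^T=0$ (the defining equation of $\cC_0$, with $H_0$ its singular-limit graph) into It\^o's formula so that the drift produced by the $\frac1\epsilon A_\epsilon\xi$ term and by $\frac{d}{ds}H_\epsilon^{-1}$ cancel to leading order, leaving a negative remainder. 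Partitioning $[s_0,s]$ into $\cO((s-s_0)/\epsilon)$ subintervals, applying Doob's maximal inequality on each together with a union bound then produces the polynomial prefactor $K(s,\epsilon,\sigma)$. Finally, rerunning the construction for the full process stopped at $\tau_{\cB(r)}$, the nonlinear term $b$ contributes only an $\cO(r)$ perturbation to the supermartingale drift, which for $r$ small merely shifts $\kappa$; this transfers the bound from $\xi^l$ to $\xi$ and yields the claim.

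I expect the main obstacle to be the supermartingale drift estimate: one must show that the contributions of the time-varying linearization $A_\epsilon(y^{\det}_s)$ and of $\frac{d}{ds}H_\epsilon(y^{\det}_s)^{-1}$ combine, through the Lyapunov equation, into a strictly confining leading drift, uniformly along the slow trajectory \emph{and} uniformly in $\epsilon$, so that the weight $\gamma$ can be kept below $1$ and a fixed $\kappa>0$ survives. Carrying the precise $\epsilon$- and $\sigma$-dependence of every error term through this argument, so that $K(s,\epsilon,\sigma)$ remains polynomial as $(\epsilon,\sigma)\to(0,0)$ and $s\to\infty$, is the delicate bookkeeping that distinguishes the proof from a routine Gaussian computation.
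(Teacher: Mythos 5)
Your proposal is correct and follows essentially the same route as the cited source: the paper itself gives no proof (by its stated convention the proof is in Berglund--Gentz \cite{BerglundGentz1}, pp.~149--150), and the argument there is precisely your scheme --- linearize around the slow manifold using the invariance equation, control the Gaussian part via an exponential supermartingale built from the quadratic form $\sigma^{-2}\xi^T H_\epsilon(y)^{-1}\xi$ and the Lyapunov equation, partition $[s_0,s]$ into $\cO((s-s_0)/\epsilon)$ subintervals with Doob's inequality and a union bound to obtain the polynomial prefactor, and absorb the $\cO(\|\xi\|^2)$ nonlinear remainder on $\{u<\tau_{\cB(r)}\}$ as an $\cO(r)$ shift of $\kappa$. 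You have also correctly identified the delicate point (the uniform-in-$\epsilon$ confinement of the supermartingale drift along the slow trajectory), which is exactly where the technical work in Berglund--Gentz lies.
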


The main conclusion of Theorem \ref{thm:BG} is that sample paths near normally hyperbolic attracting slow manifolds are \texttt{metastable} i.e. they stay near the manifold for exponentially long times except when the slow dynamics moves the system near a fast subsystem bifurcation point so that the stopping time $\tau_{\cD_y}$ is reached. Theorem \ref{thm:BG} does not immediately guarantee that we can use moments from the linearized process $\xi^l_s$ to approximate the moments of the nonlinear process $\xi_s$. For a approach to this problem re-consider the general fast-slow SDE \eqref{eq:gen_SDE}. The associated slow flow ODE is $dy^0_s=g(h_0(y^0_s),y^0_s)ds$. Define $x^0_s:=h_0(y^0_s)$ and observe that the solutions $(x_s,y_s)$ of \eqref{eq:gen_SDE} depend implicitly on $\epsilon$. A complementary result to Theorem \ref{thm:BG} by Kabanov and Pergamenshchikov provides a convenient convergence in probability of the process $(x_s,y_s)$ to $(x^0_s,y^0_s)$ as $\epsilon \ra 0$.

\begin{thm}[\cite{KabanovPergamenshchikov}, p.45-46]
\label{thm:KP}
Suppose (A0)-(A2) and (A4) hold. We start at $s=s_0$ and consider a final time $S>0$ such that $(x_s,y_s)$ has not left $\cD$. Then for any $s\in[s_0,S]$ 
\benn
\sup_{0\leq s\leq S}|x_s-x^0_s|\stackrel{\P}{\ra} 0\qquad \text{and}\qquad \sup_{0\leq s\leq S}|y_s-y^0_s|\stackrel{\P}{\ra} 0
\eenn
as $\epsilon\ra 0$ where $\stackrel{\P}{\ra}$ indicates convergence in probability.
\end{thm}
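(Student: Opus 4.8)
The plan is to treat Theorem~\ref{thm:KP} as a stochastic averaging / singular-perturbation statement: when $\epsilon$ is small the fast variable relaxes onto the attracting slow manifold on a time scale $\cO(\epsilon)$, so that $x_s$ tracks $h_0(y_s)$ and the slow equation reduces to the slow flow. Concretely, I would first control the fast deviation and then close a Gronwall estimate for the slow variable. The compactness and uniform normal hyperbolicity furnished by (A0) (the domain $\cD$ is bounded away from the bifurcation point) are what make the constants below uniform in $y$.

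First I would introduce the deviation $\xi_s := x_s - h_0(y_s)$ and compute its SDE by It\^o's formula, obtaining a drift $\tfrac{1}{\epsilon}[f(h_0(y_s)+\xi_s,y_s) - \epsilon\, D_yh_0(y_s)\,g(\cdots)]$ together with the diffusion $\tfrac{\sigma}{\sqrt\epsilon}F(\cdots)\,dW_s$. On $\cD$ the branch $C^a_0$ is uniformly attracting, so the linearization $A_0(y)=D_xf(h_0(y),y)$ has spectrum in $\{\mathrm{Re}\,\lambda\le -c\}$ for some $c>0$ uniformly in $y\in\cD_y$. This yields a family of positive-definite Lyapunov matrices $P(y)$, bounded with bounded inverse, solving $A_0(y)^TP(y)+P(y)A_0(y)=-I$. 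Setting $V(\xi,y)=\xi^TP(y)\xi$ and applying It\^o gives a differential inequality of the form
\be
\frac{d}{ds}\E[V(\xi_s,y_s)] \le -\frac{c'}{\epsilon}\E[V(\xi_s,y_s)] + \frac{C\sigma^2}{\epsilon},
\ee
where the contraction rate $\cO(1/\epsilon)$ comes from the fast drift and the forcing $\cO(\sigma^2/\epsilon)$ from the quadratic variation of the diffusion term; the lower-order contributions from $D_yh_0\,g$ and from the curvature of $h_0$ are absorbed into $c'$ and $C$ on the compact domain. Integrating from the deterministic start $\xi_0=0$ gives $\E[|\xi_s|^2]=\cO(\sigma^2)$ uniformly on $[s_0,S]$, so by (A4) the fast deviation vanishes in $L^2$ as $\epsilon\to0$.

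With the fast deviation controlled in mean square, the slow variable follows from a deterministic Gronwall argument carried out pathwise. From $dy_s=g(x_s,y_s)\,ds$ and $dy^0_s=g(h_0(y^0_s),y^0_s)\,ds$ I split the increment as $g(x_s,y_s)-g(h_0(y_s),y_s)$, bounded by $L|\xi_s|$ via the $C^2$-regularity of $g$, plus $g(h_0(y_s),y_s)-g(h_0(y^0_s),y^0_s)$, bounded by $L'|y_s-y^0_s|$. Gronwall's inequality then yields
\be
\sup_{s_0\le s\le S}|y_s-y^0_s| \le L\,e^{L'(S-s_0)}\int_{s_0}^S|\xi_u|\,du,
\ee
whose right-hand side has expectation $\cO(\sigma)$ by the fast estimate and hence tends to $0$ in probability. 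For the fast coordinate itself I would transfer the pointwise moment bound to a uniform-in-$s$ bound by writing the linearized fast equation as a stochastic convolution against the fundamental solution generated by $A_0/\epsilon$ and applying Doob's maximal inequality (or Burkholder--Davis--Gundy) to the It\^o integral; this controls $\E[\sup_{s}|\xi_s|^2]$ up to a factor growing at most logarithmically in $\epsilon$, which is still beaten by $\sigma\to0$. Then $\sup_s|x_s-x^0_s|\le\sup_s|\xi_s|+\|Dh_0\|\sup_s|y_s-y^0_s|\to 0$ in probability.

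I expect the main obstacle to be exactly this uniform-in-$s$ control of the fast deviation under the competing $\cO(1/\epsilon)$ scalings: the diffusion coefficient $\sigma/\sqrt\epsilon$ diverges as $\epsilon\to0$, and only the simultaneous $\cO(1/\epsilon)$ contraction keeps $\xi_s$ small. Pointwise moments balance these cleanly (the stationary variance is $\cO(\sigma^2)$, independent of $\epsilon$), but the supremum over the fixed window $[s_0,S]$ effectively samples $\cO(1/\epsilon)$ relaxation times, so one needs the maximal inequality to guarantee only a logarithmic overhead. Alternatively, one could bypass this step by invoking Theorem~\ref{thm:BG}: choosing a radius $r=r(\sigma)\to\infty$ slowly with $r\sigma\to0$ makes the exit probability from the covariance neighborhood $\cB(r)$ vanish while its width $\cO(r\sigma)$ still shrinks, giving $\sup_s|x_s-h_\epsilon(y_s)|\to0$ in probability; combined with $h_\epsilon=h_0+\cO(\epsilon)$ from Theorem~\ref{thm:fenichel1} and the same Gronwall step this reproduces the claim, at the cost of importing the nondegeneracy hypothesis on $F F^T$ that Theorem~\ref{thm:BG} requires but Theorem~\ref{thm:KP} does not.
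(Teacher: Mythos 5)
You should first know that the paper contains no internal proof to compare against: by the convention stated in the introduction, a result headed by a citation with page numbers (here \cite{KabanovPergamenshchikov}, p.~45--46) is quoted together with its proof from the reference. The Kabanov--Pergamenshchikov argument is of the same family as yours --- even-moment bounds for the fast deviation (indeed the very estimate this paper imports from the same book as Lemma \ref{lem:moment_estimates}) combined with Gronwall-type arguments --- so your overall architecture (a mean-square contraction estimate for $\xi_s=x_s-h_0(y_s)$, then a pathwise Gronwall bound for $y_s-y^0_s$, which is legitimately deterministic since $dy_s$ carries no noise in \eqref{eq:gen_SDE}) is the right one, and it cleanly yields the \emph{pointwise-in-$s$} version of the claim, since $\E[|\xi_s|^2]=\cO(\sigma^2+\epsilon^2)\to 0$; note the $\cO(\epsilon^2)$ arises because defining $\xi_s$ via $h_0$ rather than $h_\epsilon$ leaves an $\cO(\epsilon)$ drift bias --- harmless, but your $\cO(\sigma^2)$ is not quite right. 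Your quadratic Lyapunov inequality also holds only in a neighborhood of $C^a_0$ where the linearization dominates the quadratic remainder of $f$, so a stopping-time localization (or a converse-Lyapunov function on all of $\cD$, which (A0) makes available) is needed; again minor.

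The genuine gap is in the uniform-in-$s$ step, which you correctly single out as the crux but then resolve incorrectly, twice. First, Doob or Burkholder--Davis--Gundy does not apply directly to $\xi^l_s=\frac{\sigma}{\sqrt\epsilon}\int_0^s\Phi_\epsilon(s,r)F_\epsilon(y_r)\,dW_r$: this stochastic convolution is not a martingale in $s$ because the kernel depends on the upper limit, so one must either slice $[s_0,S]$ into $\cO(S/\epsilon)$ subintervals and use Bernstein-type exponential inequalities (precisely the Berglund--Gentz machinery that Section \ref{sec:SDE_fs} says it wants to avoid), or push Lemma \ref{lem:moment_estimates} to order $p\sim\ln(1/\epsilon)$ and apply Chebyshev. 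Second, and more seriously, every such repair produces $\sup_{s\le S}|\xi_s|\asymp\sigma\sqrt{\ln(S/\epsilon)}$ --- the fixed slow-time window samples $\cO(S/\epsilon)$ relaxation times --- and (A4) imposes \emph{no rate} on $\sigma(\epsilon)$. If, say, $\sigma(\epsilon)=1/\ln\ln(1/\epsilon)$, then $\sigma\sqrt{\ln(1/\epsilon)}\to\infty$ and already for the linear Ornstein--Uhlenbeck normal form the supremum of the deviation does not tend to zero in probability; so your assertion that the logarithmic overhead ``is still beaten by $\sigma\to0$'' is false under (A4) alone, and a condition of the type $\sigma(\epsilon)^2\ln(1/\epsilon)\to 0$ (implicit in the hypotheses of the cited reference, or else a retreat to pointwise-in-$s$ convergence) is genuinely needed. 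Your fallback route via Theorem \ref{thm:BG} has exactly the same defect: the prefactor $K(s,\epsilon,\sigma)$ grows polynomially in $1/\epsilon$, so $K e^{-\kappa r^2/(2\sigma^2)}\to 0$ again demands that $\sigma^2\ln(1/\epsilon)$ be small, not merely that $\sigma\to0$; choosing $r(\sigma)\to\infty$ slowly does not evade the prefactor.
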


As a direct corollary to this result the linearized process $\xi^l_s$ also approximates $\xi_s$ in probability as both processes tend to the same deterministic limit as $\epsilon\ra 0$.

\begin{prop}
\label{prop:conv}
Under the assumptions (A0)-(A2) and (A4) we have $\sup_{0\leq s\leq S}|\xi_s-\xi^l_s|\stackrel{\P}{\ra} 0$, as $\epsilon \ra 0$. In particular, we have convergence in distribution $\xi_s \stackrel{d}{\ra} \xi^l_s$ as $\epsilon \ra 0$.
\end{prop}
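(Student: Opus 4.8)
The plan is to reduce everything to the triangle inequality
\benn
\sup_{0\leq s\leq S}|\xi_s-\xi^l_s|\leq \sup_{0\leq s\leq S}|\xi_s|+\sup_{0\leq s\leq S}|\xi^l_s|,
\eenn
and to show that each of the two deviation processes converges uniformly to zero in probability. Geometrically this reflects the fact, already signalled before the statement, that as $\epsilon\ra 0$ both the nonlinear sample path and its linearization collapse onto the deterministic slow flow, on which the deviation from the attracting slow manifold vanishes identically.

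First I would treat $\xi_s=x_s-h_\epsilon(y_s)$. Theorem \ref{thm:KP} gives $\sup_{0\leq s\leq S}|x_s-x^0_s|\stackrel{\P}{\ra}0$ and $\sup_{0\leq s\leq S}|y_s-y^0_s|\stackrel{\P}{\ra}0$, where $x^0_s=h_0(y^0_s)$. Fenichel's Theorem \ref{thm:fenichel1} supplies $\|h_\epsilon-h_0\|_{\infty}=\cO(\epsilon)$ on the compact set $\cD_y$, while (A1) makes $h_0$ Lipschitz there with some constant $L$. Hence
\benn
|h_\epsilon(y_s)-x^0_s|\leq \|h_\epsilon-h_0\|_{\infty}+L\,|y_s-y^0_s|\stackrel{\P}{\ra}0
\eenn
uniformly in $s$, and combined with $x_s\stackrel{\P}{\ra}x^0_s$ this yields $\sup_{0\leq s\leq S}|\xi_s|\stackrel{\P}{\ra}0$.

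The step I expect to be the main obstacle is the uniform control of the linearized deviation $\xi^l_s$, because pointwise smallness of $\xi^l_s$ is immediate (its covariance is $\sigma^2X_s$ with $X_s$ bounded on $\cD_y$ by Lemma \ref{lem:BG_lem}, and $\sigma\ra 0$ by (A4)) but the supremum over $s$ needs an argument that is uniform in time. The cleanest route is to observe that the pair $(\xi^l_s,y^{\det}_s)$ governed by \eqref{eq:lin_SDE} is itself a fast-slow SDE of the form \eqref{eq:gen_SDE}, with fast variable $\xi^l$, fast drift $A_\epsilon(y^{\det})\xi^l$, and (autonomous) slow variable $y^{\det}$. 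Its fast critical manifold is $\{\xi^l=0\}$, which is normally hyperbolic and attracting uniformly over $\cD_y$ precisely because we remain in region (R1) where $A_0(y)$ has eigenvalues bounded away from the imaginary axis; in particular the associated slow-flow solution issuing from $\xi^l_0=0$ stays identically zero. Applying Theorem \ref{thm:KP} to this auxiliary system therefore gives $\sup_{0\leq s\leq S}|\xi^l_s|\stackrel{\P}{\ra}0$. The point that must be checked carefully here, and that makes the uniform estimate work despite the singular $\sigma/\sqrt{\epsilon}$ prefactor on the noise, is that this prefactor is exactly balanced by the $1/\epsilon$ contraction rate of the linear drift, so that the stationary magnitude of $\xi^l_s$ is $\cO(\sigma)$ rather than $\cO(\sigma/\sqrt{\epsilon})$; this is the quantitative content of the boundedness of $X_s$ in Lemma \ref{lem:BG_lem}.

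Combining the two uniform estimates through the displayed triangle inequality yields $\sup_{0\leq s\leq S}|\xi_s-\xi^l_s|\stackrel{\P}{\ra}0$. The asserted convergence in distribution is then an instance of the converging-together (Slutsky) principle: for any bounded Lipschitz test function $\phi$ one has $|\E[\phi(\xi_s)]-\E[\phi(\xi^l_s)]|\leq \E\big[\min(\mathrm{Lip}(\phi)\,|\xi_s-\xi^l_s|,2\|\phi\|_{\infty})\big]$, and the right-hand side tends to $0$ by bounded convergence, so the laws of $\xi_s$ and $\xi^l_s$ become indistinguishable as $\epsilon\ra 0$.
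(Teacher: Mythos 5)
Your argument is correct and takes essentially the same route as the paper: the paper likewise applies Theorem \ref{thm:KP} to both deviation processes $\xi_s$ and $\xi^l_s$ (viewing each as a fast-slow SDE whose limiting slow solution is $\xi^0_s\equiv 0$) and concludes via the same triangle inequality. The additional details you supply --- the Fenichel estimate $\|h_\epsilon-h_0\|_\infty=\cO(\epsilon)$ used to pass from $x_s$ to $\xi_s$, the normal hyperbolicity check for the auxiliary linear system, and the bounded-Lipschitz (Slutsky) argument for the convergence in distribution --- are correct elaborations of steps the paper leaves implicit.
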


\begin{proof}
Observe that Theorem \eqref{thm:KP} can also be applied to the processes $\xi_s$ and $\xi_s^l$ instead of $x_s$ with $\tilde{h}_0(y^0_s)=\xi^0_s\equiv 0$. This yields 
\benn
\sup_{0\leq s\leq S}|\xi_s-\xi^l_s|=\sup_{0\leq s\leq S}|\xi_s-\xi^0_s+\xi^0_s-\xi^l_s|\leq \sup_{0\leq s\leq S}|\xi_s-0|+\sup_{0\leq s\leq S}|\xi^l_s-0| \stackrel{\P}{\ra} 0.\qedhere
\eenn
\end{proof}

Proposition \ref{prop:conv} only states that the two stochastic processes converge to the same deterministic process as $\epsilon\ra 0$. However, for a metastable approximation one must check how the $k$-th moment approximation depends on the \textit{time} $s$ and the time scale separation $\epsilon$. In particular, we are interested in the first and second moments and let
\benn
\delta_1(s,\epsilon):=\E[\xi_s]-\E[\xi_s^l],\qquad \delta_2(s,\epsilon):=\text{Cov}(\xi_s)-\text{Cov}(\xi_s^l).
\eenn
It is certainly possible to adapt previous results such as the work by Berglund and Gentz \cite{BerglundGentz} to achieve explicit moment bounds. However, the techniques are rather complicated and based upon martingale methods, Bernstein-type inequalities, subdivison of suitable time intervals and calculating new explicit moment bounds and aim to control probabilities \emph{path-wise}. Here we are going to develop a very short and essentially 'algorithmic' argument for moment bounds for truncated normal forms. The technique is elementary and only uses a suitable difference process, well-known even-moment bounds and the Cauchy-Schwartz inequality; this approach may even have the potential to simplify calculations for path-wise control such as (\cite{BerglundGentz8}, Section 4). 

We shall only discuss moment approximation for the fold bifurcation which provides an outline how moments can be controlled in the general case. The simplest normal form model for a fold bifurcation with additive noise is
\be
\label{eq:fold_SDE}
\begin{array}{rcl}
dx_s&=&\frac{1}{\epsilon}(-y_s-x_s^2)ds + \frac{\sigma}{\sqrt\epsilon} dW_s,\\
dy_s&=&1 ~ds,\\
\end{array}
\ee
where we can also view $y_s=(s-s_0)+y_{s_0}$ as a time variable. The attracting critical manifold is $C^a_0=\{(x,y)\in\R^2:x=\sqrt{-y}=h_0(y)\}$ with an associated slow manifold $C^a_\epsilon=\{x=h_\epsilon(y)=h_0(y)+\cO(\epsilon)\}$. Note that for \eqref{eq:fold_SDE} we have $y_s=y^{\det}_s$. Therefore we get that \eqref{eq:deviate_SDE} is given by
\be
\label{eq:fold_SDE1}
\begin{array}{rcl}
d\xi_s&=&\frac{1}{\epsilon}(-2\sqrt{-y_s}\xi_s-\xi_s^2+\cO(\epsilon))ds +  \frac{\sigma}{\sqrt\epsilon} dW_s,\\
dy_s&=&1 ~ds.\\
\end{array}
\ee
where we are going to formally drop the higher-order term $\cO(\epsilon)$-term from now on. The linearized problem \eqref{eq:lin_SDE} is 
\be
\label{eq:fold_SDE2}
\begin{array}{rcl}
d\xi^l_s&=&\frac{1}{\epsilon}(-2\sqrt{-y_s})\xi_s^l ds +  \frac{\sigma}{\sqrt\epsilon} dW_s,\\
dy_s&=&1 ~ds.\\
\end{array}
\ee
To analyze the transient behavior we consider the \texttt{difference process} $v_s:=\xi_s-\xi^l_s$. It satisfies the differential equation
\be
\label{eq:diff_process}
\begin{array}{lcl}
dv_s&=&\frac{1}{\epsilon}[-y_s-h_\epsilon(y_s)^2-2h_\epsilon(y_s)v_s -\xi_s^2-\epsilon D_yh_\epsilon(y_s)]ds\\
&=&\frac{1}{\epsilon}[-2\sqrt{-y_s}v_s -\xi_s^2+\cO(\epsilon)]ds.\\
\end{array}
\ee
where the $\cO(\epsilon)$-term will again be dropped. We always consider a initial condition $y_0$ at time $s_0=0$ such that $y_s=s+y_0$ and $y_{s}<0$ for $s\in[0,s^*]$ for some $s^*>0$ such that $y_s$ remains in the compact region $\cD$.

\begin{lem} 
The expected value of the difference process satisfies the ODE
\be
\label{eq:moment_equation1}
\frac{d}{ds}\E[v_s]=\frac{1}{\epsilon}\left(-2\sqrt{-y_s}~\E[v_s]-\E[\xi_s^2]\right).
\ee
\end{lem}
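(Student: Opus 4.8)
The plan is to integrate the SDE \eqref{eq:diff_process} and then take expectations, exploiting the crucial observation that the difference process $v_s=\xi_s-\xi^l_s$ has \emph{no} martingale part. Indeed, both $\xi_s$ and $\xi^l_s$ are driven by the identical additive noise $\frac{\sigma}{\sqrt\epsilon}dW_s$, so in forming $v_s$ the stochastic integrals cancel exactly. Hence $v_s$ solves a random ODE with absolutely continuous sample paths, and after discarding the $\cO(\epsilon)$-term the integral form of \eqref{eq:diff_process} reads
\[
v_s = v_0 + \frac{1}{\epsilon}\int_0^s\left(-2\sqrt{-y_r}\,v_r - \xi_r^2\right)dr.
\]
Since by construction we start on the deterministic slow manifold with $\xi_0=\xi^l_0=0$, we have $v_0=0$.

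Next I would take expectations of this integral equation. Because $y_r=r+y_0$ is deterministic, the coefficient $-2\sqrt{-y_r}$ is non-random and can be pulled through the expectation. The step that requires care is the interchange of $\E$ and $\int_0^s(\cdots)\,dr$, i.e.\ an application of Fubini's theorem, and this is where the main work lies: one must verify the integrability hypothesis $\int_0^s \E\!\left[\,|2\sqrt{-y_r}\,v_r| + \xi_r^2\,\right]dr < \infty$. This follows from the standing assumption (A0) that the sample paths remain in the compact domain $\cD$, together with the boundedness of $\sqrt{-y_r}$ on $[0,s]\subset[0,s^*]$ and the finiteness and continuity on this interval of the second moments $\E[\xi_r^2]$ and $\E[v_r^2]$; the latter are guaranteed by the standard even-moment bounds for the linearized Ornstein--Uhlenbeck-type process $\xi^l_r$ and the corresponding bounds for $\xi_r$.

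Granting the interchange, we obtain
\[
\E[v_s] = \frac{1}{\epsilon}\int_0^s\left(-2\sqrt{-y_r}\,\E[v_r] - \E[\xi_r^2]\right)dr,
\]
and finally I would differentiate with respect to $s$. The integrand $s\mapsto -2\sqrt{-y_s}\,\E[v_s]-\E[\xi_s^2]$ is continuous on $[0,s^*]$ (again by continuity of the moments and of $\sqrt{-y_s}$), so the fundamental theorem of calculus applies and yields exactly \eqref{eq:moment_equation1}. The expected obstacle is therefore not the differentiation, which is routine, but the justification of the Fubini interchange, i.e.\ establishing the a priori moment finiteness that legitimizes moving the expectation inside the time integral; this is precisely the point at which the compactness in (A0) and the elementary even-moment estimates are invoked.
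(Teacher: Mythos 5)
Your proposal is correct and follows essentially the same route as the paper, whose entire proof is the one-line instruction to subtract \eqref{eq:fold_SDE2} from \eqref{eq:fold_SDE1} and take the expectation. You simply make explicit the standard justifications the paper leaves implicit (cancellation of the common martingale part, Fubini via the even-moment bounds of Lemma \ref{lem:moment_estimates} on the compact domain of (A0), and differentiation via the fundamental theorem of calculus), which is a legitimate filling-in of detail rather than a different argument.
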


\begin{proof}
Substract \eqref{eq:fold_SDE2} from \eqref{eq:fold_SDE1} and take the expectation.
\end{proof}

By the variation of constants formula (\cite{Hale}, p.82) the solution of \eqref{eq:moment_equation1} is given by
\be
\label{eq:fold_ex_fund}
\E[v_s]=\E[v_0]~X(s,s_0)-\int_{s_0}^s \frac{\E[\xi_r^2]}{\epsilon}X(s,r) dr  
\ee
where $X(s,r)$ is the fundamental solution of $\frac{d}{ds}\E[v_s]=\frac1\epsilon (-2\sqrt{-y_s} ~\E[v_s])$. If we can show that $\E[\xi_s^2]$ is ``small'' then \eqref{eq:fold_ex_fund} provides a way to show that the mean of $v_s$ remains small as well.

\begin{lem}[\cite{KabanovPergamenshchikov}, p.20-25]
\label{lem:moment_estimates}
Suppose the stochastic differential equation $dX_s=\alpha(X_s,s)ds+\beta(s)dW_s$ with $X\in\R^m$ and $\beta(s)\in \R^{m\times k}$ satisfies for $s\in[s_1,s_2]$ the stability condition $X^T\alpha(X,s)\leq -\kappa \|X\|^2$ and has uniformly bounded noise term $\sup_{s\in[s_1,s_2]}\|\beta(s)\|\leq M$ then 
\be
\label{eq:KP_mom}
\E[X_s^{2p}]\leq p!\left(\frac{M^2}{\kappa}\right)^p.
\ee
\end{lem}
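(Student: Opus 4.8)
The plan is to apply It\^{o}'s formula to the test function $\phi(x)=\|x\|^{2p}$, take expectations to obtain a closed differential inequality for the moment $m_p(s):=\E[\|X_s\|^{2p}]$, and then run an induction on $p$. Throughout I read $\E[X_s^{2p}]$ as $\E[\|X_s\|^{2p}]$ and take $\|\beta\|$ to be the Hilbert--Schmidt norm, so that $\|\beta^T x\|\leq\|\beta\|\,\|x\|$ and $\mathrm{tr}(\beta\beta^T)\leq\|\beta\|^2\leq M^2$ hold simultaneously. In the applications the process starts on the slow manifold, so I will assume the deterministic initial condition $X_{s_1}=0$, giving $m_p(s_1)=0$ for every $p\geq1$.

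First I would compute the action of the generator on $\phi$. Writing $r=\|x\|^2$ and $\phi=r^p$, the derivatives are $\partial_{x_i}\phi=2p\,r^{p-1}x_i$ and $\partial^2_{x_ix_j}\phi=4p(p-1)r^{p-2}x_ix_j+2p\,r^{p-1}\delta_{ij}$, so It\^{o}'s formula gives
\be
d\phi(X_s)=\left[2p\,r^{p-1}\big(X_s^T\alpha\big)+2p(p-1)r^{p-2}\big(X_s^T\beta\beta^TX_s\big)+p\,r^{p-1}\mathrm{tr}(\beta\beta^T)\right]ds+dM_s,
\ee
where $r=\|X_s\|^2$ and $dM_s=2p\,r^{p-1}X_s^T\beta(s)\,dW_s$ is a local martingale. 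The stability hypothesis $X^T\alpha\leq-\kappa\|X\|^2$ bounds the first drift term by $-2p\kappa\,r^p=-2p\kappa\|X_s\|^{2p}$, while the noise bound gives $X_s^T\beta\beta^TX_s\leq M^2r$ and $\mathrm{tr}(\beta\beta^T)\leq M^2$. Taking expectations (see the obstacle below) and using $\E[r^{p-1}]=m_{p-1}(s)$ produces the recursive differential inequality
\be
\label{eq:mp_rec}
\frac{d}{ds}m_p(s)\leq-2p\kappa\,m_p(s)+p(2p-1)M^2\,m_{p-1}(s),
\ee
where the coefficient $2p(p-1)+p=p(2p-1)$ coincides exactly with the scalar case $m=k=1$.

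Next I would close the estimate by induction on $p$. The base case $p=0$ is trivial since $m_0\equiv1$. Assuming $m_{p-1}(s)\leq(p-1)!\,(M^2/\kappa)^{p-1}$ on $[s_1,s_2]$, inequality \eqref{eq:mp_rec} becomes a scalar linear differential inequality $\dot m_p\leq-2p\kappa\,m_p+C_p$ with $C_p=p(2p-1)(p-1)!\,M^{2p}/\kappa^{p-1}$. Comparison with the associated linear ODE (equivalently, Gronwall applied to $e^{2p\kappa s}m_p$) together with $m_p(s_1)=0$ yields
\benn
m_p(s)\leq\frac{C_p}{2p\kappa}\big(1-e^{-2p\kappa(s-s_1)}\big)\leq\frac{C_p}{2p\kappa}=\frac{2p-1}{2}\,(p-1)!\left(\frac{M^2}{\kappa}\right)^{p}.
\eenn
Since $\tfrac{2p-1}{2}\leq p$, this is at most $p\cdot(p-1)!\,(M^2/\kappa)^p=p!\,(M^2/\kappa)^p$, which closes the induction and proves the claim. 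For a nonzero deterministic start the same comparison applies whenever $m_p(s_1)\leq p!(M^2/\kappa)^p$, because the equilibrium of the comparison ODE already lies below this value.

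The step I expect to be the main obstacle is the passage from the differential form to the expectation in \eqref{eq:mp_rec}: a priori one does not know that $m_p(s)$ is finite, so the local martingale $M_s$ cannot be assumed centered. I would handle this by the standard localization, stopping at $\tau_n=\inf\{s:\|X_s\|\geq n\}$ so that the stopped stochastic integral is a genuine martingale of zero mean, deriving \eqref{eq:mp_rec} for $m_p(s\wedge\tau_n)$, and then sending $n\to\I$ by monotone convergence, using the already-established finite bound on $m_{p-1}$ to control the right-hand side. A secondary bookkeeping point is the norm convention noted above: with the operator norm one picks up a dimension-dependent factor from $\mathrm{tr}(\beta\beta^T)$, whereas the Hilbert--Schmidt choice collapses the constant in \eqref{eq:mp_rec} to the scalar value and lets the induction close with the clean factorial bound.
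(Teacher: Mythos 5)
Your proof is correct: the It\^{o}-formula computation for $\|x\|^{2p}$, the resulting recursive inequality $\frac{d}{ds}m_p\leq -2p\kappa\,m_p+p(2p-1)M^2m_{p-1}$, the Gronwall/comparison step with the equilibrium value $\frac{2p-1}{2}(p-1)!\left(M^2/\kappa\right)^p\leq p!\left(M^2/\kappa\right)^p$, and the induction all check out, and your localization remark (together with the Hilbert--Schmidt norm convention for $\mathrm{tr}(\beta\beta^T)$) addresses the only genuine integrability subtlety. Note that the paper itself gives no proof of this lemma---by its stated citation convention the proof is deferred to \cite{KabanovPergamenshchikov}, pp.~20--25---and your argument is essentially that standard reference argument, so there is nothing to reconcile.
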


Applying Lemma \ref{lem:moment_estimates} to $\xi_s=X_s$ and equation \eqref{eq:fold_SDE1} we see that $\kappa=\cO(1/\epsilon)$ and $M=\sqrt\sigma/\epsilon$. Therefore using \eqref{eq:KP_mom} with $p=1$ yields 
\be
\label{eq:variance_xis}
\frac{\E[\xi_s^2]}{\epsilon}\leq \frac{\sigma^2}{\epsilon^2}\cO(\epsilon)= \cO\left(\frac{\sigma^2}{\epsilon}\right)=\cO\left(\frac{\sigma(\epsilon)^2}{\epsilon}\right)
\ee
where $s\in[0,s^*]$ to assure normal hyperbolicity with $\kappa=\cO(1/\epsilon)$. Using the estimate \eqref{eq:variance_xis} in \eqref{eq:fold_ex_fund} and assuming that $\E[v_0]=\E[\xi_0-\xi_0^l]=0$ we get the inequality
\be
\label{eq:fold_ex_fund1}
|\E[v_s]|\leq \int_{0}^s \left|\cO\left(\frac{\sigma(\epsilon)^2}{\epsilon}\right)X(s,r) \right|dr =\delta_1(s,\epsilon). 
\ee
In particular, we can use the linearized process to approximate the mean 
\benn
|\E[v_s]|=|\E[\xi_s]-E[\xi_s^l]|\leq \delta_1(s,\epsilon).
\eenn
Next, we define $V_s:=\text{Var}(\xi_s)-\text{Var}(\xi_s^l)$. 

\begin{lem}
\label{lem:var_est_final}
The difference process $V_s$ for the variance satisfies the ODE
\be
\label{eq:moment_equation2}
\frac{d}{ds}V_s=\frac2\epsilon \left( -2\sqrt{-y_s}~V_s   -\E[\xi_s^2 \xi_s] +\E[\xi_s^2]\E[\xi_s]\right).
\ee
\end{lem}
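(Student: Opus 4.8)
The plan is to derive a closed evolution equation for each variance $\text{Var}(\xi_s)$ and $\text{Var}(\xi^l_s)$ separately and then subtract. The key tool is It\^o's formula applied to the squared processes, combined with the first-moment equations already available for the means. Write $a_s := -2\sqrt{-y_s}$ for the (negative) attraction coefficient, so that the truncated nonlinear equation \eqref{eq:fold_SDE1} reads $d\xi_s = \frac{1}{\epsilon}(a_s\xi_s - \xi_s^2)ds + \frac{\sigma}{\sqrt\epsilon}dW_s$ and the linearized equation \eqref{eq:fold_SDE2} reads $d\xi^l_s = \frac{1}{\epsilon}a_s\xi^l_s ds + \frac{\sigma}{\sqrt\epsilon}dW_s$.

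First I would apply It\^o's formula to $\xi_s^2$. Since the diffusion coefficient is the constant $\sigma/\sqrt\epsilon$, the quadratic-variation term contributes exactly $\frac{\sigma^2}{\epsilon}ds$, giving $d(\xi_s^2) = \frac{2}{\epsilon}(a_s\xi_s^2 - \xi_s^3)ds + \frac{\sigma^2}{\epsilon}ds + \frac{2\sigma}{\sqrt\epsilon}\xi_s\, dW_s$. Taking expectations yields an ODE for $\E[\xi_s^2]$ coupled to $\E[\xi_s^3]$. Differentiating $\text{Var}(\xi_s) = \E[\xi_s^2] - (\E[\xi_s])^2$ and substituting the mean equation $\frac{d}{ds}\E[\xi_s] = \frac{1}{\epsilon}(a_s\E[\xi_s] - \E[\xi_s^2])$ (the expectation of \eqref{eq:fold_SDE1}) produces, once the contributions $\frac{2}{\epsilon}a_s\E[\xi_s^2]$ and $-\frac{2}{\epsilon}a_s(\E[\xi_s])^2$ recombine into $\frac{2}{\epsilon}a_s\text{Var}(\xi_s)$, the equation $\frac{d}{ds}\text{Var}(\xi_s) = \frac{2}{\epsilon}a_s\text{Var}(\xi_s) + \frac{2}{\epsilon}(\E[\xi_s]\E[\xi_s^2] - \E[\xi_s^3]) + \frac{\sigma^2}{\epsilon}$.

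Repeating the identical computation for the linearized process is simpler because no cubic term appears: It\^o's formula for $(\xi^l_s)^2$ together with the mean equation $\frac{d}{ds}\E[\xi^l_s] = \frac{1}{\epsilon}a_s\E[\xi^l_s]$ gives $\frac{d}{ds}\text{Var}(\xi^l_s) = \frac{2}{\epsilon}a_s\text{Var}(\xi^l_s) + \frac{\sigma^2}{\epsilon}$. Subtracting the two variance equations, the additive-noise contributions $\frac{\sigma^2}{\epsilon}$ cancel exactly — this cancellation is the whole point, and is why the final formula for $V_s$ carries no explicit $\sigma$ — while the linear-in-variance terms combine into $\frac{2}{\epsilon}a_s V_s$. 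Recalling $a_s = -2\sqrt{-y_s}$ and $\E[\xi_s^3] = \E[\xi_s^2\xi_s]$, this is precisely \eqref{eq:moment_equation2}.

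The derivation is essentially routine bookkeeping; the only genuine point requiring care is the vanishing of the stochastic-integral expectations, i.e. that $\int_0^s \xi_r\,dW_r$ and $\int_0^s \xi^l_r\,dW_r$ are true martingales rather than merely local ones. I would discharge this by invoking the even-moment estimate \eqref{eq:KP_mom} with $p=1$, which bounds $\E[\xi_s^2]$ uniformly for $s\in[0,s^*]$ (normal hyperbolicity keeps $\kappa = \cO(1/\epsilon)$ on this interval) so that $\E\int_0^s \xi_r^2\,dr < \infty$; the corresponding bound for the Gaussian process $\xi^l_s$ is immediate. A secondary remark is that the $\cO(\epsilon)$ drift terms dropped in \eqref{eq:fold_SDE1} and \eqref{eq:diff_process} are consistently omitted here as well, in keeping with the formal truncation used throughout this section.
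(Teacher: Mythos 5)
Your proposal is correct and follows essentially the same route as the paper: the paper also applies It\^{o}'s formula to obtain the variance evolution equations for $\xi_s$ and $\xi_s^l$ (citing a standard formula whose content is exactly your computation via $d(\xi_s^2)$ combined with the mean equation), then subtracts, with the $\sigma^2/\epsilon$ terms cancelling and $\E[\xi_s^2\xi_s]=\E[\xi_s^3]$. Your explicit verification that the stochastic integrals are true martingales, via the moment bound \eqref{eq:KP_mom}, is a detail the paper leaves implicit but is consistent with its argument.
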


\begin{proof}
A direct calculation using It\^{o}'s formula (\cite{Socha}, p.87) shows that
\be
\begin{array}{lcl}
\frac{d}{ds}\text{Var}(\xi_s)&=&2\E\left[\frac1\epsilon \left(-2\sqrt{-y_s}\xi_s-\xi_s^2\right)(\xi_s-\E[\xi_s])\right]+\frac{\sigma^2}{\epsilon},\\
\frac{d}{ds}\text{Var}(\xi^l_s)&=&2\E\left[\frac1\epsilon \left(-2\sqrt{-y_s}\xi^l_s\right)(\xi^l_s-\E[\xi_s^l])\right]+\frac{\sigma^2}{\epsilon}.\\
\end{array}
\ee
Then using $\text{Var}(\xi_s)=\E[\xi_s^2]-\E[\xi_s]^2$ and $\text{Var}(\xi_s^l)=\E[(\xi^l_s)^2]-\E[\xi^l_s]^2$ gives \eqref{eq:moment_equation2}.
\end{proof}

\begin{lem} 
\label{lem:third_moment}
$|\E[\xi_s^2\xi_s^l]|\leq \cO(\sigma^3)$ and $|\E[\xi_s^2]\E[\xi_s^l]|\leq \cO(\sigma^3)$. 
\end{lem}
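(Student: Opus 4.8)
The plan is to obtain both estimates purely algebraically from the even-moment bounds of Lemma~\ref{lem:moment_estimates} together with the Cauchy--Schwarz inequality, in the spirit of the ``algorithmic'' argument announced before \eqref{eq:fold_SDE}. No new differential equation or variation-of-constants step is needed: once the low-order moments of $\xi_s$ and of $\xi_s^l$ are controlled on the relevant interval, the two claims drop out immediately.

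First I would record the even-moment bounds for both processes on $s\in[0,s^*]$, where region (R1) ensures $\sqrt{-y_s}\geq c>0$ for a constant $c$ independent of $\epsilon$. For the nonlinear deviation $\xi_s$ from \eqref{eq:fold_SDE1}, the stability rate $\kappa=\cO(1/\epsilon)$ and the additive noise bound identified in deriving \eqref{eq:variance_xis} satisfy $M^2/\kappa=\cO(\sigma^2)$, so \eqref{eq:KP_mom} applied with general $p$ gives $\E[\xi_s^{2p}]\leq p!\,(M^2/\kappa)^p=\cO(\sigma^{2p})$; in particular $\E[\xi_s^2]=\cO(\sigma^2)$ and $\E[\xi_s^4]=\cO(\sigma^4)$. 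For the linearized process $\xi_s^l$ from \eqref{eq:fold_SDE2} the drift is exactly linear, so $\xi^l\cdot\alpha(\xi^l,s)=-\frac{2\sqrt{-y_s}}{\epsilon}(\xi_s^l)^2$ and the stability condition of Lemma~\ref{lem:moment_estimates} holds with the same $\kappa$ and the same $M$; hence $\E[(\xi_s^l)^{2p}]=\cO(\sigma^{2p})$, and in particular $\E[(\xi_s^l)^2]=\cO(\sigma^2)$.

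With these moments in hand the two bounds follow at once. For the first, Cauchy--Schwarz gives
\[
|\E[\xi_s^2\,\xi_s^l]|\leq \big(\E[\xi_s^4]\big)^{1/2}\big(\E[(\xi_s^l)^2]\big)^{1/2}\leq \cO(\sigma^2)\cdot\cO(\sigma)=\cO(\sigma^3).
\]
For the second, Jensen's inequality yields $|\E[\xi_s^l]|\leq \big(\E[(\xi_s^l)^2]\big)^{1/2}=\cO(\sigma)$, whence $|\E[\xi_s^2]\,\E[\xi_s^l]|\leq \cO(\sigma^2)\cdot\cO(\sigma)=\cO(\sigma^3)$. (In fact, taking expectations in \eqref{eq:fold_SDE2} with the deterministic initial value $\xi_0^l=0$ forces $\E[\xi_s^l]\equiv 0$, so the second estimate holds trivially; I would still keep the moment argument because it is the version that transfers to the general normal forms and to the $\xi_s$-terms $\E[\xi_s^3]$ and $\E[\xi_s^2]\E[\xi_s]$ appearing in \eqref{eq:moment_equation2}, which are bounded by $\cO(\sigma^3)$ by the identical computation.)

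The single delicate point---and the step I expect to be the real obstacle---is guaranteeing that the even-moment bounds hold \emph{uniformly} in $s\in[0,s^*]$ with implied constants independent of $\epsilon$. This rests on the stability hypothesis of Lemma~\ref{lem:moment_estimates} being valid throughout region (R1): for $\xi_s$ the quadratic drift term $-\xi_s^2/\epsilon$ contributes $-\xi_s^3/\epsilon$ to $\xi_s\,\alpha(\xi_s,s)$ and opposes the restoring force when $\xi_s<0$, so one must use the compactness of $\cD$ and the lower bound $\sqrt{-y_s}\geq c$ to ensure $|\xi_s|<2\sqrt{-y_s}$ on the relevant range and thereby keep $\kappa=\cO(1/\epsilon)$ uniform. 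This is precisely the regime fixed by (A0) and already exploited for \eqref{eq:variance_xis}, so the uniformity is available; since the quotient $M^2/\kappa=\cO(\sigma^2)$ is then $\epsilon$-independent to leading order, the resulting $\cO(\sigma^3)$ bounds are uniform in $s$, as needed for the variance estimate.
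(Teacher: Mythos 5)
Your proof is correct and takes essentially the same route as the paper's two-line argument: the even-moment bounds of Lemma~\ref{lem:moment_estimates} combined with the Cauchy--Schwarz inequality give the first estimate, and the second is handled ``similarly,'' for which your Jensen step (or the sharper observation that $\E[\xi_s^l]\equiv 0$ for the linear equation with zero initial condition) is the natural completion. Your closing remark on the uniform validity of the stability condition $X^T\alpha(X,s)\leq -\kappa\|X\|^2$ in the presence of the quadratic drift term---resolved via (A0) and the lower bound $\sqrt{-y_s}\geq c$ in region (R1)---is a genuine precision that the paper's proof leaves implicit.
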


\begin{proof}
By a combination of Lemma \eqref{lem:moment_estimates} and the Cauchy-Schwarz inequality its follows that
\benn
\E[\xi_s^2\xi_s^l]\leq \E[\xi_s^4]^{1/2}\E[(\xi_s^l)^2]^{1/2}\leq \cO(\sigma^2)\cO(\sigma)=\cO(\sigma^3).
\eenn
The second results is proven similarly.
\end{proof}

As in the derivation of the bound \eqref{eq:fold_ex_fund1} we now use Lemma \ref{lem:var_est_final} and Lemma \ref{lem:third_moment} to conclude that
\be
\label{eq:fold_ex_fund2}
|Var(\xi_s)-Var(\xi_s^l)|=|V_s|\leq \int_{0}^s \left|\cO\left(\frac{\sigma(\epsilon)^3}{\epsilon}\right)\tilde{X}(s,r) \right|dr =\delta_2(s,\epsilon). 
\ee
where $\tilde{X}(s,r)$ is the fundamental solution of $\frac{d}{ds}\E[V_s]=\frac1\epsilon (-4\sqrt{-y_s} ~\E[V_s])$. The estimates \eqref{eq:fold_ex_fund1} and \eqref{eq:fold_ex_fund2} require the fundamental solutions of systems of the form
\be
\label{eq:fund_sol_1D}
\frac{d}{ds}w(s,r)=-\frac{\kappa}{\epsilon}\sqrt{-s-y_0}~w(s,r),\quad w(r,r)=1\quad \Rightarrow \quad w(s,r)=e^{\frac{2\kappa}{3\epsilon}\left[(-s-y_0)^{3/2}-(-r-y_0)^{3/2}\right]}.
\ee
where $\kappa>0$ is a constant; here $\kappa=2,4$ for the first and second moment estimates. We remark that in \eqref{eq:fund_sol_1D} the formal condition $(-s-y_0)^{3/2}\sim \epsilon$ with $s=0$ yields the critical scaling $y\sim \epsilon^{2/3}$ as expected from the loss of normal hyperbolicity near a fold (\cite{KruSzm3}, p.291; \cite{BerglundGentz}, p.87). To estimate $\delta_1(s,\epsilon)$ and $\delta_2(s,\epsilon)$ one must consider the integral
\be
\label{eq:Laplace_integral}
\int_0^s e^{\varphi(r)/\epsilon}dr \quad\text{ with }\varphi(r):=\frac{2\kappa}{3}\left[(-s-y_0)^{3/2}-(-r-y_0)^{3/2}\right].
\ee
Note carefully that \eqref{eq:Laplace_integral} has asymptotics that can be determined via Laplace's method (see \cite{BenderOrszag}, p.265-267). If no formal truncation, {e.g.}~in \eqref{eq:fold_SDE1}, is applied there are much more detailed results available in \cite{BerglundGentz8} using explicit calculations where Laplace-type integrals still appear \cite{BerglundGentz}. However, it seems that the ideas used here utilizing the difference process, the direct moment estimates from Lemma \ref{lem:moment_estimates} and the Cauchy-Schwarz inequality are a simple, and essentially algorithmic, shortcut to lead to a Laplace-type integral.

\begin{prop}
\label{prop:Laplace}
Suppose $(-s+y_0)=\cO(\epsilon^{2\alpha})$ with $\alpha<1/3$ then $\int_0^s e^{\varphi(r)/\epsilon}dr\sim\epsilon^{1-\alpha}$ as $\epsilon\ra 0$. 
\end{prop}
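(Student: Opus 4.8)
The plan is to treat \eqref{eq:Laplace_integral} as a Laplace-type integral whose phase $\varphi$ attains its maximum at the right endpoint $r=s$, and to extract the leading order by a rescaling adapted to the $\epsilon$-dependence of $\varphi$. First I would record the geometry of the phase. Since $(-r-y_0)^{3/2}$ is decreasing in $r$ on $[0,s]$, the phase $\varphi$ is increasing with $\varphi(s)=0$ and $\varphi(r)<0$ for $r<s$; hence the integrand $e^{\varphi(r)/\epsilon}$ is maximised (equal to $1$) at $r=s$. Writing $a:=-s-y_0=-y_s>0$ for the distance to the fold, the hypothesis supplies $a=\cO(\epsilon^{2\alpha})$, and I take $a$ of exact order $\epsilon^{2\alpha}$ so that a genuine equivalence rather than a mere bound results. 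Direct differentiation gives $\varphi'(r)=\kappa(-r-y_0)^{1/2}$, so the endpoint slope is $\varphi'(s)=\kappa a^{1/2}\asymp\epsilon^{\alpha}$.

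The key point, and the reason textbook Laplace asymptotics cannot simply be quoted, is that $\varphi$ itself depends on $\epsilon$: the endpoint slope $\kappa a^{1/2}$ does not stay bounded away from zero but vanishes like $\epsilon^{\alpha}$ as $\epsilon\to0$. I would therefore substitute $w:=s-r$, so that $-r-y_0=a+w$, and rescale $w=\tfrac{\epsilon}{\kappa a^{1/2}}\tau$, the natural width on which the exponent decays. Expanding $(a+w)^{3/2}=a^{3/2}+\tfrac32 a^{1/2}w+\tfrac38 a^{-1/2}w^2+\cdots$ gives $\varphi/\epsilon=-\tfrac{\kappa a^{1/2}}{\epsilon}w+\cO\!\big(\tfrac{w^2}{a^{1/2}\epsilon}\big)$; after the rescaling the linear term becomes exactly $-\tau$ while the quadratic remainder becomes $\cO(\epsilon^{1-3\alpha}\tau^2)$. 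The transformed integral is $\tfrac{\epsilon}{\kappa a^{1/2}}\int_0^{L}e^{-\tau+o(1)}\,d\tau$ with upper limit $L=s\kappa a^{1/2}/\epsilon\asymp\epsilon^{\alpha-1}\to\infty$.

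Now I would pass to the limit. Because $t\mapsto t^{3/2}$ is convex, $(a+w)^{3/2}\ge a^{3/2}+\tfrac32 a^{1/2}w$, whence $\varphi(r)/\epsilon\le-\tau$ after the rescaling; this furnishes the integrable dominating function $e^{-\tau}$ on $[0,\infty)$, and dominated convergence yields $\int_0^{L}e^{-\tau+o(1)}\,d\tau\to\int_0^\infty e^{-\tau}\,d\tau=1$. The contribution of the lower endpoint $r=0$ is exponentially small since $\varphi(0)/\epsilon\to-\infty$, and the interval length $s\asymp1$ dwarfs the decay width $\epsilon^{1-\alpha}$, so no boundary effect at $r=0$ intervenes. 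Collecting the factors gives $\int_0^s e^{\varphi(r)/\epsilon}\,dr\sim\tfrac{\epsilon}{\kappa a^{1/2}}\asymp\epsilon^{1-\alpha}$, as claimed.

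The main obstacle is precisely the coupling of the two limits: both $\epsilon\to0$ and the endpoint slope $\varphi'(s)\to0$ simultaneously, so the decay width $w\sim\epsilon^{1-\alpha}$ competes with the curvature scale $a\sim\epsilon^{2\alpha}$ of the phase. Controlling this competition is where the hypothesis $\alpha<1/3$ is indispensable: the quadratic remainder in the expansion of $\varphi$ is negligible over the effective region of integration exactly when $\epsilon^{1-\alpha}\ll a\asymp\epsilon^{2\alpha}$, i.e. when $1-\alpha>2\alpha$, which is $\alpha<1/3$. Equivalently, $\epsilon^{1-3\alpha}\to0$ makes the $\cO(\epsilon^{1-3\alpha}\tau^2)$ error vanish and legitimises replacing the phase by its linearisation at the endpoint; for $\alpha\ge1/3$ the fold's $\epsilon^{2/3}$ scaling is reached, the linearisation fails, and a different (Airy-type, $\epsilon^{1/3}$) analysis would be required.
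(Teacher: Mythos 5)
Your proof is correct, and it reaches the same endpoint-Laplace asymptotics as the paper, but by a more self-contained route. The paper's proof is two lines: it checks $\varphi'(r)>0$ on $[0,s]$ for $s<s^*$ and then directly quotes the standard endpoint formula from (\cite{BenderOrszag}, p.266, (6.4.19b)), giving $\int_0^s e^{\varphi(r)/\epsilon}dr\sim \epsilon\, e^{\varphi(s)/\epsilon}/\varphi'(s)=\epsilon/(\kappa(-s-y_0)^{1/2})=\epsilon^{1-\alpha}$. What you do differently is to re-derive that formula rather than cite it: you substitute $w=s-r$, rescale by the decay width $w=\epsilon\tau/(\kappa a^{1/2})$ with $a=-s-y_0\asymp\epsilon^{2\alpha}$, dominate the integrand by $e^{-\tau}$ using convexity of $t\mapsto t^{3/2}$, and pass to the limit by dominated convergence. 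This extra work buys something real: the textbook formula is stated for a phase that is fixed as $\epsilon\to 0$, whereas here $\varphi$ degenerates with $\epsilon$ (the endpoint slope $\varphi'(s)=\kappa a^{1/2}\asymp\epsilon^{\alpha}$ tends to zero), so its applicability is exactly the uniformity question you address. In particular, your argument makes explicit where the hypothesis $\alpha<1/3$ enters — the quadratic remainder $\cO(\epsilon^{1-3\alpha}\tau^2)$ must vanish, i.e. the decay width $\epsilon^{1-\alpha}$ must be small relative to the curvature scale $\epsilon^{2\alpha}$ — a point the paper's citation-based proof leaves implicit, with the breakdown at $\alpha=1/3$ matching the $y\sim\epsilon^{2/3}$ fold scaling discussed around \eqref{eq:fund_sol_1D}. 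One cosmetic note: the statement of Proposition \ref{prop:Laplace} writes $(-s+y_0)$ while the phase in \eqref{eq:Laplace_integral} involves $(-s-y_0)$; you silently used $a=-s-y_0$, which is consistent with the paper's own evaluation $\epsilon/(\kappa(-s-y_0)^{1/2})$, so no harm done.
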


\begin{proof}
One calculates that $\varphi'(r)>0$ for $r\in[0,s]$ if $s<s^*$. Then applying the standard asymptotic Laplace approximation at the endpoint $s$ (see \cite{BenderOrszag}, p.266, (6.4.19b)) yields
\benn
\int_0^s e^{\varphi(r)/\epsilon}dr\sim \epsilon\frac{e^{\varphi(s)/\epsilon}}{\varphi'(s)}=\epsilon\frac{1}{\kappa(-s-y_0)^{1/2}}=\epsilon^{1-\alpha},\qquad \text{as $\epsilon\ra 0$.}\qedhere
\eenn 
\end{proof}

Hence we obtain from \eqref{eq:fold_ex_fund1}, \eqref{eq:fold_ex_fund2} and Proposition \ref{prop:Laplace} that in the normally hyperbolic regime $\cD$ with $y=\cO(\epsilon^{2\alpha})$ and $\alpha<1/3$ the moment estimates are 
\benn
\delta_1(s,\epsilon)=\cO(\sigma^2\epsilon^{-\alpha}) \qquad \text{and} \qquad \delta_2(s,\epsilon)=\cO(\sigma^3\epsilon^{-\alpha}).
\eenn
Lemma \ref{lem:BG_lem} gives for the fold bifurcation the desired moment approximation for the linearized process $\text{Var}(\xi^l_s)=\sigma^2 [H_\epsilon(y)]$ so that the approximation result for the variance is
\be
\label{eq:var_fold_final}
\text{Var}(\xi_s)=\sigma(\epsilon)^2 [H_\epsilon(y)]+\cO\left(\sigma(\epsilon)^3\epsilon^{-\alpha}\right)\qquad \text{as $\epsilon\ra 0$.}
\ee
For $\alpha=0$ the process is at an $\cO(1)$-distance from the critical transition point at the fold and $\text{Var}(\xi_s)=\sigma(\epsilon)^2 [H_\epsilon(y)]+\cO\left(\sigma(\epsilon)^3\right)$. As expected, the estimate of variance becomes less accurate the closer sample paths move towards $(x_p,y_p)=(0,0)$. For $\alpha>0$, the error term in formula  \eqref{eq:var_fold_final} is asymptotic if and only if $\sigma^2\gg \sigma^3\epsilon^{-\alpha}$ or $\epsilon^\alpha\gg \sigma$. 

Note that if $\epsilon^{k_0+\alpha}H_{k_0}(y)=\cO( \sigma)$ for all $k\geq k_0>0$ then
\benn
\text{Var}(\xi_s)=\sigma^2 \left[H_0(y)+\sum_{k=1}^{k_0-1}H_k(y)\epsilon^k\right]+\cO\left(\sigma^3\epsilon^{-\alpha}\right)
\eenn 
since we can absorb the correction terms for the slow manifold of the variance into $\cO(\sigma^3\epsilon^{-\alpha})$. In particular, if $k_0=1$ then it follows that 
\be
\label{eq:move_everything}
\text{Var}(\xi_s)=\sigma^2 H_0(y)+\cO\left(\sigma^3\epsilon^{-\alpha}\right).
\ee
In principle, we could also calculate higher-order corrections to the slow manifold defined by $X=H_\epsilon(Y)$; see (\cite{BerglundGentz}, p. 147) and Section \ref{sec:fold_asymp}. For simplicity, we shall only consider the lowest-order approximation for a general codimension-two fast subsystem bifurcation.\\

For another fast subsystem bifurcation, we will get another approximation of the moments as we used $\sqrt{-y}=x$ for the slow manifold in the fold scenario. However, we still expect that  
\be
\label{eq:estimate_var_general}
\text{Cov}(\xi_s)=\sigma^2 [H_\epsilon(y)]+\delta_2(\epsilon,s).
\ee 
where $H_\epsilon(y)=\sum_{k=0}^\I \epsilon^kH_k(y)$ and $\delta_2(\epsilon,s)$ denotes a small $\epsilon$-dependent error term for the second moments. In fact, the methods we use here based upon moment equations, integral estimates and direct asymptotics all generalize to higher-dimensional phase space and higher-codimension bifurcations. Hence we conjecture that \eqref{eq:estimate_var_general} is still valid for these cases. Although we do not calculate the asymptotic relations here, our approach provides a direct computational method for the relevant scalings.

It is very important to recall the result is still only local around the attracting slow manifold in a compact set $\cD=\cD(\epsilon)$. Although $(x_p,y_p)\in\partial \cD(0)$ one always has to use the moment approximations by a linearized process outside of a $(\epsilon,\sigma(\epsilon))$-dependent neighbourhood of the critical transition point $(x_p,y_p)$. Small $(\epsilon,\sigma(\epsilon))$-dependent neighbourhoods (R2) near the bifurcation point have to be considered separately \cite{BerglundGentzKuehn,Kuske,BerglundGentz6}. For early-warning signs it is very reasonable to ask for the earliest possible statistical indicators. Once a sample path reaches (R2) it is extremely close to a fast jump so a warning sign may be difficult to utilize in applications.   

\section{Covariance Scaling Laws near Critical Transitions}
\label{sec:variance}

To calculate $H_0(y)$ we have to solve the algebraic equation
\be
\label{eq:ma1}
0=A_0(y)X+XA_0(y)^T+F_0(y)F_0(y)^T.
\ee
where the matrices $A_0(y)$ are chosen according to normal form theory from Table \ref{tab:det_res} (see also \eqref{eq:Aeps_def}-\eqref{eq:Feps_def} for definitions). It will be convenient to introduce a notation for the symmetric matrix $F_0(y)F_0(y)^T$ that describes the noise term
\be
\label{eq:defineN}
(N_{ij}(y))=N(y):=F_0(y)F_0(y)^T
\ee
for $i,j\in\{1,2,\ldots,m\}$. If $N(y)$ is a constant matrix then we deal with purely additive noise while dependence on $y$ indicates multiplicative (or slowly parameter-dependent) noise. To distinguish between the small noise asymptotics 
\benn
\epsilon\ra 0\qquad  \Rightarrow \quad \sigma=\sigma(\epsilon)\ra 0
\eenn  
and the approach towards the fast subsystem bifurcation point $y$ tending to the origin we use the order notation $\cO^*_y$ for $y\ra 0$. Recall that (A0) specifies what type of double asymptotics we allow and that all results are constrained to a bounded domain {i.e.} a result $w(y)=\cO_y^*(W(y))$ is to be understood as, for a given sufficiently small $\epsilon>0$, and hence a given $\sigma(\epsilon)>0$, there exists a compact non-empty domain $\cD_y(\epsilon)\subset \R^n$ with $0\in\partial \cD(0)$ but $0\not\in\cD(\epsilon)$ ({cf.}~(R1) in Figure \ref{fig:fig1}) and constants $K_i$, $i=1,2$ such that
\benn
K_1W(y)\leq w(y)\leq K_2W(y)
\eenn
for all $y\in \cD_y(\epsilon)$. In particular, the early-warning signs we are going to derive are for fixed $(\epsilon,\sigma(\epsilon))$ sufficiently small, a fixed domain $\cD(\epsilon)=\cD_x(\epsilon)\times \cD_y(\epsilon)$ chosen around a slow manifold so that the approximation is good as $y$ tends to the transition point but will eventually break down in a small region near the critical transition that scales with $\epsilon$ and $\sigma$ and includes the critical transition point in its boundary for $\epsilon=0=\sigma$. Small $(\epsilon,\sigma)$-dependent regions containing a critical transition point require a special analysis and will not be considered; see the remarks on additional literature in Section 8. 

Furthermore, we agree to the convention that any limit as $y\ra 0$ is always understood as the natural one-sided limit if necessary e.g. $\cO_y^*(\sqrt{-y})$ means $y\ra 0^-$.

\begin{thm}
\label{thm:CK1}
Suppose $0<\epsilon\ll1$ and (A0)-(A4) hold for a fast subsystem bifurcation with one fast variable (fold, transcritical, pitchfork, cusp) and $\epsilon>0$ is sufficiently small. Then the variance of the process $\xi_s$ near an attracting slow manifold approaching the bifurcation satisfies
\benn
\text{Var}(\xi_s)=\sigma^2 [H_\epsilon(y)]+\delta_2(s,\epsilon).
\eenn 
where $H_\epsilon(y)=H_0(y)+\cO(\epsilon)$ and 
\begin{enumerate}
 \item[(V1)] (fold) $H_0(y)=\cO_y^*\left(\frac{N(y)}{\sqrt{y}}\right)$,
 \item[(V2)] (transcritical, pitchfork) $H_0(y)=\cO_y^*\left(\frac{N(y)}{y}\right)$,
 \item[(V3)] (cusp) $H_0(y)=\cO_{y_2}^*\left(\frac{N(y)}{y_2}\right)$; where the slow variable $y_2$ multiplies the linear term in the fast subsystem normal form \eqref{eq:nf_cusp}. 
\end{enumerate}
In particular, if $\delta_2(s,\epsilon)\ll \sigma^2$ and $N(y)$ is constant then the variance scales, to lowest order, as $\sigma^2/\sqrt{y}$ for the fold, as $\sigma^2/y$ for the transcritical/pitchfork and as $\sigma^2/y_2$ for the cusp transition.
\end{thm}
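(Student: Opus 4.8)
The plan is to exploit the fact that for a single fast variable ($m=1$) the steady-state covariance equation from Lemma \ref{lem:BG_lem} collapses to a scalar relation. First I would recall that $H_0(y)$ is characterized as the solution of the algebraic Lyapunov equation \eqref{eq:ma1}, namely $0 = A_0(y)X + XA_0(y)^T + F_0(y)F_0(y)^T$. When $x\in\R^1$ all of $A_0(y)$, $X$ and $N(y)=F_0(y)F_0(y)^T$ are scalars, so $A_0(y)X + XA_0(y)^T = 2A_0(y)X$ and the equation reduces to the elementary scalar identity $2A_0(y)H_0(y) + N(y) = 0$.

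Next I would solve this directly to obtain $H_0(y) = -N(y)/(2A_0(y))$. The hypotheses (A0)--(A2) are exactly what make this legitimate: on the attracting branch $C_0^a$ the single eigenvalue $A_0(y)$ is negative, so $A_0(y)$ is nonzero throughout the compact domain $\cD_y(\epsilon)$ and the quotient is well defined; moreover the resulting $H_0(y)$ is positive, as a variance must be. This simultaneously confirms that the critical manifold $\cC_0$ of the covariance ODE is attracting, as asserted in Lemma \ref{lem:BG_lem}.

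The remaining step is a substitution of the linearizations recorded in the last column of Table \ref{tab:det_res}, taken in their $y\ra 0$ asymptotic form. For the fold, $A_0(y)=-2\sqrt{-y}$ gives $H_0(y) = N(y)/(4\sqrt{-y}) = \cO_y^*(N(y)/\sqrt{-y})$, which is (V1). For the transcritical and pitchfork bifurcations, $A_0(y)=y$ with $y<0$ on the attracting branch gives $H_0(y) = -N(y)/(2y) = \cO_y^*(N(y)/y)$, which is (V2). For the cusp one uses the asymptotic linearization \eqref{eq:A0_cusp}, $A_0(y)=\cO_y(y_2)$ with $y_2<0$ on $C_0^a$, to obtain $H_0(y) = \cO_{y_2}^*(N(y)/y_2)$, which is (V3). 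The error term $\delta_2(s,\epsilon)$ and the expansion $H_\epsilon(y)=H_0(y)+\cO(\epsilon)$ are inherited from Section \ref{sec:SDE_fs}: rigorously for the fold via \eqref{eq:fold_ex_fund2}, and in the conjectural sense of \eqref{eq:estimate_var_general} for the remaining cases.

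The computation itself is elementary; the only genuinely delicate point is the cusp, where $A_0(y)$ is not exactly $y_2$ but only of order $y_2$, since the graph $x=h_0(y)$ contributes the term $3h_0(y)^2$ to the linearization. Here I would invoke the bound $|h_0(y)|\lesssim\sqrt{-y_2}$ established in the discussion following \eqref{eq:A0_cusp} to guarantee that $3h_0(y)^2$ does not dominate $y_2$, so that $A_0(y)=y_2(1+o(1))$ and the scaling $N(y)/y_2$ is correctly captured by $\cO_{y_2}^*$. Apart from tracking signs and identifying which slow variable multiplies the linear term, no further estimate is needed, and the final ``in particular'' claim follows by specializing to constant $N(y)$.
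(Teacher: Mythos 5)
Your proposal is correct and follows essentially the same route as the paper's proof: reduce the Lyapunov equation \eqref{eq:ma1} for one fast variable to the scalar identity $0=2A_0(y)X+N(y)$, solve to get $H_0(y)=-N(y)/(2A_0(y))$ (well defined and positive since $A_0(y)<0$ on the attracting branch), and substitute the linearizations from Table \ref{tab:det_res}, with $\delta_2(s,\epsilon)$ and $H_\epsilon(y)=H_0(y)+\cO(\epsilon)$ inherited from Section \ref{sec:SDE_fs}. The only minor caveat concerns your cusp step: the bound $3h_0(y)^2\leq -y_2$ yields $y_2\leq A_0(y)<0$, i.e.\ comparability with $y_2$ up to constants on the compact domain of (A0) away from the fold curves, rather than the stronger $A_0(y)=y_2(1+o(1))$ you assert --- but this is exactly the level of precision of the paper's own $\cO_y(y_2)$ claim in \eqref{eq:A0_cusp}, so the conclusion (V3) stands as stated.
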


\begin{proof}
We can approximate the variance of the process $\xi_s$ by its linearization $\xi_s^l$ if $\epsilon$ is sufficiently small. The linearized process has variance
\be
\label{eq:var_asymp_easy1}
\text{Var}(\xi^l_s)=\sigma^2 (H_0(y)+\cO(\epsilon))+\delta_2(s,\epsilon).
\ee
where $X=H_0(y)\in \R^+$ is the solution of
\be
\label{eq:proof_1D}
0=2A_0(y)X+N(y),\qquad \Rightarrow \quad X=H_0(y)=-\frac{N(y)}{2A_0(y)}.
\ee
The non-degeneracy assumptions of the four bifurcations considered are satisfied. By using normal forms, we know from Table \ref{tab:det_res} that $A_0(y)=\cO_y^*(\sqrt{y})$ for the fold transition, $A_0(y)=\cO_y^*(y)$ for the transcritical and pitchfork transitions while $A_0(y)=\cO_{y_2}^*(y_2)$ for the cusp transition. Direct substitution of these results for $A_0(y)$ into \eqref{eq:proof_1D} gives the result.
\end{proof}

The codimension-one fold and the transcritical/pitchfork case in Theorem \ref{thm:CK1} can also be inferred from previous works see {e.g.}~\cite{BerglundGentz}. In fact, rigorous proofs without formal truncation are available. In these results the higher-order terms do not seem to influence the scaling law in region (R1); this is one of the motivations to consider a formal truncation. The stochastic cusp, and all the following codimension-two results, have not been considered previously. It should be noted that for fast subsystems with dimension greater than one the stochastic scaling effects are much more interesting as the next result shows. 

\begin{thm}
\label{thm:CK2}
Suppose $0<\epsilon\ll1$ and (A0)-(A4) hold for a fast subsystem bifurcation with two fast variables (Hopf, Bogdanov-Takens, Bautin) and $\epsilon>0$ is sufficiently small. Then the covariance matrix of the process $\xi_s$ near an attracting slow manifold approaching the bifurcation satisfies
\benn
\text{Cov}(\xi_s)=\sigma^2 [H_\epsilon(y)]+\delta_2(s,\epsilon).
\eenn 
where $H_\epsilon(y)=H_0(y)+\cO(\epsilon)$ and 
\begin{enumerate}
 \item[(V4)] (Hopf, Bautin)
 \benn
  H_0(y)=\left(\begin{array}{cc}
 -\frac{2N_{11}(y)y^2+2N_{12}(y)y+N_{11}(y)+N_{22}(y)}{4y(y^2+1)} & \frac{N_{11}(y)-N_{22}(y)-2N_{12}(y)y}{4(y^2+1)} \\
\frac{N_{11}(y)-N_{22}(y)-2N_{12}(y)y}{4(y^2+1)} & -\frac{2N_{22}(y)y^2-2N_{12}(y)y+N_{11}(y)+N_{22}(y)}{4y(y^2+1)}\\
\end{array} 
\right).
 \eenn
In particular, $N$ is a constant matrix with $N_{11}+N_{22} \neq 0$ then 
\benn
 H_0(y)=\left(\begin{array}{cc}
 \cO_y^*\left(\frac1y\right) & \frac{N_{11}-N_{22}}{4} +\cO_y^*(y)\\
\frac{N_{11}-N_{22}}{4} +\cO_y^*(y) & \cO_y^*\left(\frac1y\right) \\
\end{array} 
\right).
 \eenn
 \item[(V5)] (Bogdanov-Takens; we set $\cO_y^*(\sqrt{-y_1})=k\sqrt{-y_1}$ for $A_0(y)$) 
\benn
 H_0(y)=\left(\begin{array}{cc}
  \frac{-N_{22}(y)+2kN_{12}(y)\sqrt{-y_1}\pm 2N_{11}(y)\sqrt{-y_1}+N_{11}k^2y_1}{\pm 4ky_1} & -\frac{N_{11}(y)}{2} \\
-\frac{N_{11}(y)}{2} & \frac{\pm 2N_{11}(y)+N_{22}(y)y_1/(-y_1)^{3/2}}{2k} \\
\end{array} 
\right).
\eenn
In particular, if $N$ is a constant matrix and $N_{22}\neq 0$ then
\benn
 H_0(y)=\left(\begin{array}{cc}
 \cO_y^*\left(\frac{1}{y_1}\right) & -\frac{N_{11}}{2} \\
-\frac{N_{11}}{2} & \pm \frac{N_{11}}{k}+\cO_y^*\left(\frac{1}{\sqrt{-y_1}}\right) \\
\end{array} 
\right).
\eenn
\end{enumerate}
\end{thm}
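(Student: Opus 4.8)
The plan is to reduce the computation of the covariance to solving a single algebraic Lyapunov equation and then to carry out the asymptotic analysis as $y \ra 0$. By Lemma \ref{lem:BG_lem} the rescaled covariance $\sigma^{-2}\text{Cov}(\xi^l_s)$ of the linearized process lies on an attracting slow manifold $\cC_\epsilon = \{X = H_\epsilon(y)\}$ whose leading-order graph $H_0(y)$ solves the algebraic equation \eqref{eq:ma1}; by Proposition \ref{prop:conv} together with the moment control summarized in \eqref{eq:estimate_var_general}, we may replace $\text{Cov}(\xi_s)$ by $\sigma^2 H_\epsilon(y) + \delta_2(s,\epsilon)$ with $H_\epsilon(y) = H_0(y) + \cO(\epsilon)$. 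Everything then reduces to solving \eqref{eq:ma1} with $A_0(y)$ read off from Table \ref{tab:det_res} and $N(y) = F_0(y)F_0(y)^T$.

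First I would exploit that the covariance is symmetric: writing $X = \begin{pmatrix} X_{11} & X_{12} \\ X_{12} & X_{22} \end{pmatrix}$ turns \eqref{eq:ma1} into a linear system in the three unknowns $(X_{11}, X_{12}, X_{22})$, namely $2a X_{11} + 2b X_{12} = -N_{11}$, $\; c X_{11} + (a+d) X_{12} + b X_{22} = -N_{12}$, and $2c X_{12} + 2d X_{22} = -N_{22}$, where $a,b,c,d$ denote the entries of $A_0(y)$. Because on the attracting branch $C^a_0$ the matrix $A_0(y)$ is Hurwitz throughout region (R1), the Lyapunov operator $X \mapsto A_0 X + X A_0^T$ is invertible, so this $3 \times 3$ system has a unique solution, and since $N(y) = F_0 F_0^T$ is positive semidefinite the solution is positive semidefinite as well; this is precisely where the normal-hyperbolicity assumption (A0) enters to guarantee that $H_0(y)$ is well defined away from the transition point.

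For the Hopf and Bautin normal forms Table \ref{tab:det_res} gives $a = d = y$ (resp.\ $y_1$), $b = -1$, $c = 1$, and solving the system by Cramer's rule produces a common denominator proportional to $y(y^2+1)$, reproducing the factor $y(y^2+1)$ visible in (V4); the constant-$N$ specialization then follows by reading off leading powers as $y \ra 0$, with the diagonal entries diverging like $\cO_y^*(1/y)$ and the off-diagonal entry tending to $(N_{11}-N_{22})/4$. For the Bogdanov-Takens form the structure is simpler: with $a = 0$, $b = 1$, $c = \pm 2\sqrt{-y_1}$, $d = k\sqrt{-y_1}$, the first equation immediately fixes $X_{12} = -N_{11}/2$, and back-substitution into the remaining two equations gives $X_{11}$ and $X_{22}$ as stated in (V5). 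Taking $y_1 \ra 0^-$ with $N$ constant then isolates the leading behavior $X_{11} = \cO_y^*(1/y_1)$ and $X_{22} = \pm N_{11}/k + \cO_y^*(1/\sqrt{-y_1})$.

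The computations themselves are routine, so the genuine work lies in two places. The first is certifying solvability uniformly over the admissible domain $\cD_y(\epsilon)$: I would check directly that the $3 \times 3$ determinant is nonzero on $C^a_0$ (for Hopf it equals $8y(y^2+1)\neq 0$ when $y<0$; for Bogdanov-Takens it equals $\pm 8 k y_1 \neq 0$ when $y_1<0$), which both confirms uniqueness and exposes the singular factor responsible for the blow-up as $y \ra 0$. The second, and more delicate, step is the $\cO_y^*$ extraction when the noise is multiplicative, i.e.\ when $N(y)$ itself depends on $y$: here one must carefully separate the explicit $y$-dependence coming from $A_0(y)$ (the source of the $1/y$ and $1/\sqrt{-y_1}$ singularities) from the $y$-dependence of $N(y)$, so that the stated scaling laws record only the dominant singular contribution. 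This bookkeeping is the step I expect to require the most care, although it presents no essential difficulty once the exact formulas are in hand.
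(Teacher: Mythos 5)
Your proposal is correct and follows essentially the same route as the paper's proof: reduce to the linearized process via Lemma \ref{lem:BG_lem} and the moment estimates, exploit symmetry of the covariance to turn the Lyapunov equation \eqref{eq:ma1} into a $3\times 3$ linear system in $(X_{11},X_{12},X_{22})$ with $A_0(y)$ from Table \ref{tab:det_res}, solve it explicitly, and extract the $\cO_y^*$ asymptotics as $y\ra 0$ (the paper details the Hopf/Bautin case and notes that Bogdanov--Takens follows by the same computation, exactly as you carry out). Your extra touches --- the explicit determinants $8y(y^2+1)$ and $\pm 8ky_1$ certifying unique solvability on the attracting branch, and the Hurwitz/positive-semidefiniteness remark --- are correct, harmless elaborations of the same argument rather than a different approach.
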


\begin{proof}
The proof follows the same outline as the proof of Theorem \ref{thm:CK1}. Therefore we shall only detail the calculations for the proof of (V4). We can again reduce to the linearized process and apply the formula
\be
\label{eq:var_asymp_easy2}
\text{Cov}(\xi_s)=\sigma^2 (H_0(y)+\cO(\epsilon))+\delta_2(s,\epsilon).
\ee
Denote the elements of the scaled covariance matrix as follows
\benn
X=\sigma^{-2}\text{Cov}(\xi^l_s):=\left(\begin{array}{cc}v_{11} & v_{12} \\ v_{12} & v_{22} \\\end{array}\right).
\eenn
Using the normal form matrix $A_0(y)$ from Table \ref{tab:det_res} we calculate
\bea
\label{eq:Hopf_p}
0&\stackrel{!}{=}&A_0(y)X+XA_0(y)^T+N(y) \nonumber\\
&=&\left( \begin{array}{cc} -v_{12}+v_{11}y & v_{12}y-v_{22} \\ v_{11}+v_{12}y & v_{12}+v_{22}y\\\end{array} \right)+
\left( \begin{array}{cc} -v_{12}+v_{11}y & v_{11}+v_{12}y \\ v_{12}y-v_{22} & v_{12}+v_{22}y\\\end{array} \right)+
\left( \begin{array}{cc} N_{11}(y) & N_{12}(y) \\ N_{12}(y) & N_{22}(y)\\\end{array} \right) \nonumber\\
&=& \left(\begin{array}{cc} N_{11}(y)-2v_{12}+2v_{11}y & N_{12}(y)+v_{11}-v_{22}+2v_{12}y \\ N_{12}(y)+v_{11}-v_{22}+2v_{12}y & N_{22}(y)+2v_{12}+2v_{22}y\end{array}\right).
\eea
Equation \eqref{eq:Hopf_p} yields three independent conditions. Hence we get a linear system 
\benn
\left(\begin{array}{ccc}2y & 0 & -2\\ 0 & 2y & 2 \\ 1 & -1 & 2y \\\end{array}\right)
\left(\begin{array}{c}v_{11} \\ v_{22} \\ v_{12}\\\end{array}\right)
=
\left(\begin{array}{c}-N_{11}(y) \\ -N_{22}(y) \\ -N_{12}(y)\\\end{array}\right)
\eenn
that can be solved for $(v_{11},v_{22},v_{12})$. This result can be substituted as $X=H_0(y)$ in \eqref{eq:var_asymp_easy2} and this yields the first part of (V4). If $N$ is a constant matrix then direct asymptotics shows that 
\benn
v_{11}\sim-\frac{N_{11}+N_{22}}{4y}=\cO_y^*\left(\frac{1}{y}\right), \qquad v_{22}\sim-\frac{N_{11}+N_{22}}{4y}=\cO_y^*\left(\frac{1}{y}\right)
\eenn
where $y\ra 0^-$ as we approach the critical transition via the attracting slow manifold. For the covariance we get
\benn
v_{12}\sim \frac{N_{11}-N_{22}}{4}-\frac{2N_{12}}{4}y=\frac{N_{11}-N_{22}}{4}+\cO_y^*\left(y\right).
\eenn
The result for the Bogdanov-Takens transition follows by the same techniques.
\end{proof}

Before we continue to codimension two bifurcations in $\R^3$ and $\R^4$, let us interpret the results of Theorem \ref{thm:CK2} for $\delta_2(s,\epsilon)\ll \sigma^2$. For the Hopf transition with a fixed noise level $\sigma>0$ we have found that the variance of the coordinates increases as $\cO_y^*(1/y)$ as the bifurcation point is approach with $y\ra 0$. This result is expected as we already saw an increase in variance for the one-dimensional fast subsystem bifurcations. However, for the covariance the additive noise case with a constant matrix $N$ yields
\benn
\frac{N_{11}-N_{22}}{4} +\cO_y^*(y).
\eenn
This implies that the covariance tends to a constant as $y\ra 0$; even more surprisingly, for the reasonable assumption of equal individual diffusion $N_{11}=N_{22}$ we get that the covariance tends to zero as the bifurcation is approached. Hence we can already conclude that measuring covariances can also provide important information to predict critical transitions. For the Hopf transition with multiplicative noise, let us just consider the simplest case of linear multiplicative noise without correlation $N_{11}(y)=c_1y$, $N_{22}(y)=c_2y$, $N_{12}(y)=0$. Then we find
\beann
\text{Var}(\xi_{1,s})&\sim& -\frac{(c_1+c_2)y}{4y}-\frac{2c_1y^3}{y}=\cO_y^*(1)+\cO_y^*(y^2), \quad \text{if $c_1\neq- c_2$,}\\
\text{Var}(\xi_{2,s})&\sim& -\frac{(c_1+c_2)y}{4y}-\frac{2c_2y^3}{y}=\cO_y^*(1)+\cO_y^*(y^2), \quad \text{if $c_1\neq -c_2$,}\\
\text{Cov}(\xi_{1,s},\xi_{2,s})&\sim& -\frac{(c_1-c_2)y}{4}=\cO_y^*(y), \quad \text{if $c_1\neq c_2$.}
\eeann
Therefore measuring the variance alone is not expected to yield valuable information; indeed, variance tending to a constant could be interpreted as a normally hyperbolic regime without critical transitions for additive noise \cite{KuehnCT1}. Obviously one could discuss further interesting scalings depending on the matrix $N(y)$. It should be clear from the formulas (V1)-(V5) and the previous discussion how to approach these situations as long as the system is in normal form near the bifurcation point. We proceed to look at some results for the remaining codimension two bifurcations.

\begin{thm}
\label{thm:CK3}
Suppose $0<\epsilon\ll1$ and (A0)-(A4) hold for a codimension-two fast subsystem bifurcation with at least three fast variables (Gavrilov-Guckenheimer, Hopf-Hopf). Assume that $\epsilon$ is sufficiently small and that $N=N(y)$ is a constant matrix. Then the covariance matrix of the process $\xi_s$ near an attracting slow manifold approaching the bifurcation satisfies
\benn
\text{Cov}(\xi_s)=\sigma^2 [H_\epsilon(y)]+\delta_2(s,\epsilon).
\eenn 
where $H_\epsilon(y)=H_0(y)+\cO(\epsilon)$ and 
\begin{enumerate}
 \item[(V6)] (Gavrilov-Guckenheimer) if $N_{11}\neq 0$ and $N_{22}+N_{33}\neq 0$ then 
 \benn
 H_0(y)=\cO_y^*\left(\begin{array}{ccc}
 \frac{1}{\sqrt{y_1}} & \frac{N_{13}}{\omega} &\frac{N_{12}}{\omega} \\
\frac{N_{13}}{\omega} & \frac{1}{y_2} & \frac{N_{22}-N_{33}}{4\omega}\\
 \frac{N_{12}}{\omega}&\frac{N_{22}-N_{33}}{4\omega} & \frac{1}{y_2} \\
\end{array} 
\right).
 \eenn
 \item[(V7)] (Hopf-Hopf, special case: $A_0(y)$ given by \eqref{eq:A0_HH}) if $N_{11}+N_{22}\neq 0$ and $N_{33}+N_{44}\neq 0$ then
\benn
H_0(y)=\cO_y^*\left(\begin{array}{cccc} 
\frac{1}{y_1}& \frac{N_{11}-N_{22}}{4\omega_1} & \frac{N_{14}\omega_2-N_{23}\omega_1}{\omega_1^2-\omega_2^2} & -\frac{N_{24}\omega_1-N_{13}\omega_2}{\omega_1^2-\omega_2^2}\\
 \frac{N_{11}-N_{22}}{4\omega_1} & \frac{1}{y_1} & \frac{N_{13}\omega_1+N_{24}\omega_2}{\omega_1^2-\omega_2^2} & \frac{N_{14}\omega_1-N_{23}\omega_2}{\omega_1^2-\omega_2^2}\\
\frac{N_{14}\omega_2-N_{23}\omega_1}{\omega_1^2-\omega_2^2} & \frac{N_{13}\omega_1+N_{24}\omega_2}{\omega_1^2-\omega_2^2} & \frac{1}{y_2} & \frac{N_{33}-N_{44}}{4\omega_2} \\
-\frac{N_{24}\omega_1-N_{13}\omega_2}{\omega_1^2-\omega_2^2}& \frac{N_{14}\omega_1-N_{23}\omega_2}{\omega_1^2-\omega_2^2}  & \frac{N_{33}-N_{44}}{4\omega_2} & \frac{1}{y_2}\\
\end{array}\right).
\eenn
\end{enumerate}
\end{thm}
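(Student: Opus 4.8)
The plan is to follow verbatim the template used in the proofs of Theorems \ref{thm:CK1} and \ref{thm:CK2}. First I would invoke Proposition \ref{prop:conv} together with Lemma \ref{lem:BG_lem} to replace the nonlinear process $\xi_s$ by its linearization $\xi^l_s$, so that up to the error $\delta_2(s,\epsilon)$ the scaled covariance $X=\sigma^{-2}\mathrm{Cov}(\xi^l_s)$ equals the leading term $H_0(y)$ of the slow manifold $\cC_\epsilon=\{X=H_\epsilon(y)\}$ of the variance equation \eqref{eq:lin_fs_ODE}; this is exactly the structure already asserted in the conjectured bound \eqref{eq:estimate_var_general}. By Lemma \ref{lem:BG_lem} the manifold $\cC_\epsilon$ is attracting and $H_0(y)$ is the \emph{unique} symmetric solution of the algebraic Lyapunov equation \eqref{eq:ma1}, namely $A_0(y)X+XA_0(y)^T+N(y)=0$, with $A_0(y)$ given by the linearizations \eqref{eq:A0_fH1} and \eqref{eq:A0_HH} recorded in Table \ref{tab:det_res}. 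Uniqueness is automatic because $A_0(y)$ is Hurwitz on the attracting branch $C^a_0$, so no two of its eigenvalues sum to zero; this is what makes the whole scheme purely algebraic.

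Next I would write \eqref{eq:ma1} out componentwise. For symmetric $X$ this is a linear system in six unknowns (Gavrilov--Guckenheimer) or ten unknowns (Hopf--Hopf), of the same type as the $3\times 3$ system that appeared in the proof of (V4). For the fold--Hopf case the decisive observation is that $A_0(y)$ in \eqref{eq:A0_fH1} is block diagonal, with a scalar fold block and a $2\times 2$ Hopf block, which partially decouples the system. The $(1,1)$ entry reproduces the scalar fold computation of Theorem \ref{thm:CK1}, giving $v_{11}=\cO^*_y(1/\sqrt{y_1})$; the lower $2\times 2$ diagonal block reproduces the Hopf computation (V4) with the real part of the Hopf eigenvalue in the role of $y$, yielding the $\cO^*_y(1/y_2)$ diagonal scalings and the off-diagonal $(N_{22}-N_{33})/(4\omega)$; and the two coupling entries $(1,2),(1,3)$ solve a $2\times 2$ subsystem with determinant $(a+b)^2+\omega^2\to\omega^2$, so $\omega\neq 0$ forces $v_{12},v_{13}=\cO^*_y(N_{13}/\omega),\cO^*_y(N_{12}/\omega)$. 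Substituting the leading-order $A_0$, taking $N$ constant, and discarding higher-order terms then gives (V6).

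For the Hopf--Hopf case the matrix $A_0(y)$ in \eqref{eq:A0_HH} is block diagonal with two rotation-plus-dilation blocks $B_1=\bigl(\begin{smallmatrix} y_1 & -\omega_1 \\ \omega_1 & y_1\end{smallmatrix}\bigr)$ and $B_2=\bigl(\begin{smallmatrix} y_2 & -\omega_2 \\ \omega_2 & y_2\end{smallmatrix}\bigr)$. Partitioning $X$ conformally into $2\times 2$ blocks, the two diagonal blocks each satisfy a Hopf-type Lyapunov equation and reproduce the $\cO^*_y(1/y_1),\cO^*_y(1/y_2)$ diagonals together with the $(N_{11}-N_{22})/(4\omega_1)$ and $(N_{33}-N_{44})/(4\omega_2)$ off-diagonals from (V4), while the $2\times 2$ off-diagonal block $P$ of $X$ satisfies the Sylvester equation $B_1P+PB_2^T=-N_{\mathrm{cross}}$.

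I expect the solution of this Sylvester equation to be the main obstacle, both computationally and conceptually. It is solvable precisely when $B_1$ and $-B_2^T$ share no eigenvalue, i.e. when $y_1\pm i\omega_1\neq -(y_2\pm i\omega_2)$; to leading order as $y\ra 0$ this reduces to $\omega_1\neq\pm\omega_2$. The determinant of the associated Sylvester operator is proportional to $\omega_1^2-\omega_2^2$, which is exactly the denominator appearing in the four cross-block entries of (V7). This is where the non-resonance part of assumption (A2) --- the exclusion of resonances $k\omega_1=l\omega_2$ for $k+l\leq 3$ --- is indispensable: without it the linear system for $P$ is singular and the asymptotic formula collapses. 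The remaining work is the routine inversion of this block against $N_{\mathrm{cross}}$ and the extraction of leading-order terms for constant $N$, which I would organize via Cramer's rule so that the $\omega_1^2-\omega_2^2$ denominators appear explicitly, after which (V7) follows by reading off the four blocks.
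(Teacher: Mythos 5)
Your proposal takes essentially the same approach as the paper: the paper omits the calculations for Theorem \ref{thm:CK3} precisely because they ``follow the same steps as the proofs of Theorems \ref{thm:CK1}--\ref{thm:CK2}'' (with the cumbersome $\R^3$/$\R^4$ algebra cross-checked by computer algebra), and your reduction to the linearized process plus the algebraic Lyapunov equation \eqref{eq:ma1} with $A_0(y)$ taken from Table \ref{tab:det_res} is exactly that scheme. Your block-diagonal/Sylvester organization is a correct and transparent way to carry out the omitted computations, including tying the $\omega_1^2-\omega_2^2$ denominators to the non-resonance part of (A2) as the paper remarks; the only nitpick is that the Sylvester operator's determinant tends to $(\omega_1^2-\omega_2^2)^2$ rather than $\omega_1^2-\omega_2^2$, the first power surviving in the entries after cancellation in Cramer's rule.
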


We shall omit the calculations for the proof of Theorem \ref{thm:CK3} as it follows the same steps as the proofs of Theorems \ref{thm:CK1}-\ref{thm:CK2}. We remark that the solution of the algebraic equation \eqref{eq:ma1} becomes much more cumbersome for systems in $\R^3$ and $\R^4$ and we compared our solution to the results obtained by a computer algebra system \cite{Mathematica}. Another important note on Theorem \ref{thm:CK3} is that we do not have to assume explicitly that $\omega_1\neq \omega_2$ for the Hopf-Hopf bifurcation since this is included in assumption (A2). The 1:1 resonance case at a Hopf-Hopf bifurcation $\omega_1=\omega_2$ (see \cite{vanGilsKrupaLangford,GH}) naturally appears as a special case in our analysis. In particular, when $|\omega_1^2-\omega_2^2|$ is small then the covariances of the two-by-two off-diagonal blocks in (V7) can also get large near a Hopf-Hopf critical transition.

\section{Double-Singular Variance Asymptotics for the Fold}
\label{sec:fold_asymp}

In the last section we have computed the leading-order term for the covariance near a critical transition for all fast subsystem bifurcations up to codimension two. In current applications of critical transitions one frequently encounters the fold bifurcation. From a mathematical viewpoint, the fold bifurcation is the lowest slow codimension, lowest fast dimension generic bifurcation without further assumptions. Both reasons warrant a more detailed asymptotic study to determine higher-order correction terms to the formula
\benn
\text{Var}(\xi_s)=\sigma^2 \left[\cO_y^*\left(\frac{1}{\sqrt{y}}\right)+\cO(\epsilon)\right]+\cO\left(\frac{\sigma^3}{\epsilon^{\alpha}}\right).
\eenn
from Theorem \ref{thm:CK1} for additive noise. We can always assume a preliminary normal form transformation \cite{MisRoz,Kuznetsov} and consider on the slow time scale
\be
\label{eq:fold1}
\begin{array}{rcl}
dx&=& \frac1\epsilon (y-x^2) ds+ \frac{\sigma}{\sqrt\epsilon }dW_s,\\
dy&=& -1~ds,\\
\end{array}
\ee 
where we assume additive noise to simplify the algebraic manipulations to follow. We also refer to Figure \ref{fig:fig1} and consider the attracting branch $C^a_0=\{(x,y)\in\R^2:x=\sqrt{y}=h_0(y)\}$ of the critical manifold. Fenichel's Theorem provides an attracting slow manifold $C^a_\epsilon=\{(x,y)\in\R^2:x=h_\epsilon(y)=h_0(y)+\cO(\epsilon)\}$. A converging series expansion for $C^a_0$ can easily be derived by direct regular asymptotics where convergence of the asymptotic series is guaranteed by Fenichel's Theorem. 

\begin{lem}
\label{lem:asympCeps}
The attracting slow manifold $C_\epsilon^a$ for the fold bifurcation normal form is given by
\be
\label{eq:fold_asymp_mani}
h_\epsilon(y)=\sqrt{y}-\epsilon\frac{1}{4y}-\epsilon^2 \frac{5}{32y^{5/2}}-\epsilon^3 \frac{15}{64y^{4}}-\epsilon^4 \frac{1105}{2048y^{11/2}}+\cO(\epsilon^5).
\ee
Terms of order $\cO(\epsilon^5)$ or higher are omitted but can easily be calculated from a recursive solution of algebraic equations.
\end{lem}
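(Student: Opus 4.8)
The plan is to read \eqref{eq:fold_asymp_mani} as the term-by-term solution of the slow-manifold invariance equation, generated by a power series in $\epsilon$ whose coefficients obey a simple recursion. First I would specialize the invariance equation \eqref{eq:inv2} to the fold normal form \eqref{eq:fold1}. Because the slow equation has constant right-hand side, the general relation collapses to the scalar functional equation
\be
\label{eq:fold_inv_scalar}
h_\epsilon(y)^2 = y - \epsilon\, h_\epsilon'(y),
\ee
where $(\,\cdot\,)' = d/dy$; geometrically this is just the tangency of the graph $\{x=h_\epsilon(y)\}$ to the vector field.

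Next I would insert the regular ansatz $h_\epsilon(y)=\sum_{k\ge 0}\epsilon^k h_k(y)$ with $h_0(y)=\sqrt{y}$ (the attracting branch $C_0^a$) into \eqref{eq:fold_inv_scalar} and match equal powers of $\epsilon$. At order $\epsilon^0$ one recovers $h_0^2=y$; at order $\epsilon^k$ ($k\ge1$) the new unknown enters only through the linear term $2h_0 h_k$, so it can be solved for directly,
\be
\label{eq:fold_rec_func}
h_k(y)=\frac{1}{2\sqrt{y}}\left(-\,h_{k-1}'(y)-\sum_{j=1}^{k-1}h_j(y)\,h_{k-j}(y)\right).
\ee
Dividing by $2h_0=2\sqrt{y}$ is legitimate exactly because we stay in region (R1), i.e. on a compact $\cD_y(\epsilon)$ bounded away from the fold at $y=0$. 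A one-line induction then shows that every coefficient is self-similar, $h_k(y)=a_k\,y^{-(3k-1)/2}$: both $h_{k-1}'$ and each product $h_j h_{k-j}$ carry the exponent $-(3k-2)/2$, and the prefactor $1/\sqrt{y}$ lowers this to $-(3k-1)/2$. Substituting this form reduces \eqref{eq:fold_rec_func} to a purely numerical recursion,
\be
\label{eq:fold_rec_num}
2a_k=\frac{3k-4}{2}\,a_{k-1}-\sum_{j=1}^{k-1}a_j a_{k-j},\qquad a_0=1,
\ee
and iterating gives $a_1=-\tfrac14$, $a_2=-\tfrac{5}{32}$, $a_3=-\tfrac{15}{64}$, $a_4=-\tfrac{1105}{2048}$, which is precisely \eqref{eq:fold_asymp_mani}. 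This part of the argument is entirely mechanical and continues to any order through \eqref{eq:fold_rec_num}.

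The step that genuinely needs justification — and which I would regard as the main obstacle — is not the recursion but the claim that this formal series actually represents the Fenichel slow manifold with the stated $\cO(\epsilon^5)$ remainder. Here I would invoke Theorem \ref{thm:fenichel1} on the compact normally hyperbolic piece of $C_0^a$ inside $\cD$: it produces a slow manifold $x=h_\epsilon(y)$ that is smooth and, crucially, depends smoothly on $\epsilon$ up to and including $\epsilon=0$. Smooth $\epsilon$-dependence forces a finite Taylor expansion of $h_\epsilon$ in $\epsilon$; since the true manifold satisfies \eqref{eq:fold_inv_scalar} identically in $\epsilon$, its Taylor coefficients must obey the matching relations above, so the $h_k$ generated by \eqref{eq:fold_rec_func} are exactly those coefficients and the truncation error is $\cO(\epsilon^5)$.

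The one real subtlety to flag is uniformity. Because $a_k\,y^{-(3k-1)/2}$ diverges as $y\to 0$, the $\cO(\epsilon^5)$ bound is uniform only on a domain where $y$ stays bounded below by a fixed multiple of $\epsilon^{2/3}$ — equivalently $(-s-y_0)^{3/2}\gg\epsilon$, the loss-of-hyperbolicity scale already recorded in \eqref{eq:fund_sol_1D}. This is exactly the (R1) restriction hard-wired into (A0), and it is also the reason one should read \eqref{eq:fold_asymp_mani} as an asymptotic expansion on $\cD_y(\epsilon)$ rather than a convergent series: the coefficients $a_k$ grow and only the finite-order remainder estimate is available. Establishing this uniform remainder bound on the $\epsilon$-dependent domain is where the care lies; the generation of the coefficients themselves is pure bookkeeping.
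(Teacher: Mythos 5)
Your proposal is correct and follows essentially the same route as the paper: the paper's proof of Lemma \ref{lem:asympCeps} is exactly this regular power-series ansatz in the slow-manifold invariance equation, solved recursively order by order with Fenichel's Theorem supplying the rigorous justification, and your self-similar form $h_k(y)=a_k y^{-(3k-1)/2}$ with the numerical recursion reproduces the stated coefficients $-\frac{1}{4},-\frac{5}{32},-\frac{15}{64},-\frac{1105}{2048}$ exactly. One remark: your scalar equation $h_\epsilon(y)^2=y-\epsilon h_\epsilon'(y)$ — which is precisely what generates the paper's all-negative signs — corresponds to the slow drift $\dot y=+1$, whereas \eqref{eq:fold1} has $\dot y=-1$ (for which the invariance equation reads $h_\epsilon^2=y+\epsilon h_\epsilon'$ and the odd-order coefficients flip sign), so you have faithfully matched the paper's sign convention; and your insistence that \eqref{eq:fold_asymp_mani} be read as an asymptotic expansion with a finite-order remainder uniform only on $\cD_y(\epsilon)$, rather than a convergent series, is if anything more careful than the paper's one-line justification.
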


Setting $\xi_s=x_s-h_\epsilon(y)$ and using Lemma \ref{lem:BG_lem} for equation \eqref{eq:fold1} we find that the scaled variance $X_s=\sigma^{-2}\text{Var}(\xi_s)$ satisfies the ODE
\be
\label{eq:var_fold_asymp_full}
\begin{array}{rcl}
\epsilon \dot{X}&=&-4h_\epsilon(y)X+1,\\
\dot{y}&=&-1.\\
\end{array}
\ee
The attracting critical manifold of \eqref{eq:var_fold_asymp_full} is given by $\cC_0=\{(X,y)\in \R^2:X=H_0(y)\}$. Fenichel's Theorem yields an associated attracting slow manifold $\cC_\epsilon=\{(X,y)\in \R^2:X=H_\epsilon(y)=H_0(y)+\cO(\epsilon)\}$. We already know from the proof of Theorem \ref{thm:CK1} that $H_0(y)=1/(4\sqrt{y})$. 

\begin{prop}
\label{prop:var_fold}
The attracting slow manifold $\cC_\epsilon^a$ associated to \eqref{eq:var_fold_asymp_full} has an asymptotic expansion given by
\be
\label{eq:sol_var_fold}
H_\epsilon(y)=\frac{1}{4\sqrt{y}}+\epsilon\frac{3}{32y^2}+\epsilon^2\frac{7}{64y^{7/2}}+\epsilon^3\frac{201}{1024 y^5}+\epsilon^4\frac{3837}{8192y^{13/2}}+\cO(\epsilon^5).
\ee
Terms of order $\cO(\epsilon^5)$ or higher are omitted but can easily be calculated from a recursive solution of algebraic equations.
\end{prop}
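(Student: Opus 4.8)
The plan is to convert the computation of the slow manifold $\cC_\epsilon^a=\{X=H_\epsilon(y)\}$ of the linear, non-autonomous variance equation \eqref{eq:var_fold_asymp_full} into a single algebraic recursion, in exact parallel with the way the invariant graph $h_\epsilon$ was obtained in Lemma \ref{lem:asympCeps}. Since $\cC_\epsilon^a$ is an invariant graph for the fast--slow system \eqref{eq:var_fold_asymp_full}, substituting $X=H_\epsilon(y)$ and eliminating $\dot X$ through the slow equation $\dot y=-1$ (with the same orientation of the slow flow used in Lemma \ref{lem:asympCeps}) reduces the ODE to the scalar invariance equation, the exact analogue of \eqref{eq:inv2},
\[
4\,h_\epsilon(y)\,H_\epsilon(y)=1-\epsilon\,H_\epsilon'(y).
\]
This relation carries all the information needed; the expansion \eqref{eq:fold_asymp_mani} of $h_\epsilon$ enters only as a known datum.

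I would then insert the formal series $H_\epsilon(y)=\sum_{k\ge0}\epsilon^kH_k(y)$ and $h_\epsilon(y)=\sum_{k\ge0}\epsilon^kh_k(y)$ and equate powers of $\epsilon$. The $\epsilon^0$-balance $4h_0H_0=1$ with $h_0=\sqrt y$ recovers $H_0(y)=1/(4\sqrt y)$, as already recorded after \eqref{eq:var_fold_asymp_full}. For $k\ge1$, isolating the only term containing $H_k$, namely $4h_0H_k=4\sqrt y\,H_k$, yields the triangular recursion
\[
4\sqrt y\,H_k(y)=-H_{k-1}'(y)-4\sum_{j=1}^{k}h_j(y)\,H_{k-j}(y),
\]
which determines $H_k$ from the already known $H_0,\dots,H_{k-1}$ and $h_1,\dots,h_k$. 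A one-line induction shows that each coefficient is a single monomial, $h_j(y)=b_j\,y^{-(3j-1)/2}$ and $H_k(y)=c_k\,y^{-(3k+1)/2}$, because both terms on the right-hand side scale like $y^{-3k/2}$ and dividing by $4\sqrt y$ restores the exponent $-(3k+1)/2$. The recursion therefore collapses to a purely numerical one for the rational coefficients,
\[
c_k=\tfrac{3k-2}{8}\,c_{k-1}-\sum_{j=1}^{k}b_j\,c_{k-j},\qquad c_0=\tfrac14,
\]
with $(b_1,b_2,b_3,b_4)=(-\tfrac14,-\tfrac{5}{32},-\tfrac{15}{64},-\tfrac{1105}{2048})$ read off from \eqref{eq:fold_asymp_mani}. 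Running this for $k=1,2,3,4$ produces $(c_1,c_2,c_3,c_4)=(\tfrac{3}{32},\tfrac{7}{64},\tfrac{201}{1024},\tfrac{3837}{8192})$, which is precisely \eqref{eq:sol_var_fold}.

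The $\cO(\epsilon^5)$ remainder I would justify exactly as for $h_\epsilon$ in Lemma \ref{lem:asympCeps}. By Lemma \ref{lem:BG_lem} the variance flow \eqref{eq:var_fold_asymp_full} has an attracting critical manifold, so Fenichel's Theorem \ref{thm:fenichel1} supplies a genuine $C^r$ slow manifold $\cC_\epsilon^a$ whose asymptotic coefficients are \emph{uniquely} fixed by the invariance equation. Substituting the degree-four truncation $\sum_{k=0}^{4}\epsilon^kH_k(y)$ back into that equation leaves a residual of order $\epsilon^5$, and normal hyperbolicity on the compact set $\cD_y$ upgrades this to $H_\epsilon(y)=\sum_{k=0}^{4}\epsilon^kH_k(y)+\cO(\epsilon^5)$ uniformly.

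The main obstacle here is not conceptual: the invariance equation linearizes the problem, the recursion is explicit and triangular, and each step only differentiates and multiplies half-integer powers of $y$. The genuine difficulty is purely computational bookkeeping --- the coefficients $c_k$ and the convolution sums $\sum_j b_j c_{k-j}$ grow quickly, so an arithmetic slip is easy and the powers of $y$ must be tracked with care. I would therefore verify the closed forms of $c_1,\dots,c_4$ independently with a computer algebra system \cite{Mathematica}, just as was done for the covariance formulas underlying Theorem \ref{thm:CK3}.
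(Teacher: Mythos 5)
Your proposal takes essentially the same route as the paper's proof: the paper likewise inserts the ansatz $H_\epsilon(y)=\sum_{k}\epsilon^k H_k(y)$ together with the expansion of $h_\epsilon$ from Lemma \ref{lem:asympCeps} into \eqref{eq:var_fold_asymp_full}, obtains exactly your hierarchy $0=1-4h_0(y)H_0(y)$ and $H_{k-1}'(y)=-4\sum_{i+j=k}H_i(y)h_j(y)$ (your rearranged triangular form $4\sqrt{y}\,H_k=-H_{k-1}'-4\sum_{j=1}^{k}h_jH_{k-j}$ is the same equation solved for $H_k$), and leaves the rest to ``direct calculation.'' Your extra bookkeeping --- the monomial structure $h_j(y)=b_j\,y^{-(3j-1)/2}$, $H_k(y)=c_k\,y^{-(3k+1)/2}$, the purely numerical recursion $c_k=\tfrac{3k-2}{8}c_{k-1}-\sum_{j=1}^{k}b_jc_{k-j}$ with $c_0=\tfrac14$, which indeed yields $(c_1,c_2,c_3,c_4)=\bigl(\tfrac{3}{32},\tfrac{7}{64},\tfrac{201}{1024},\tfrac{3837}{8192}\bigr)$, and the Fenichel-based justification of the $\cO(\epsilon^5)$ remainder --- is correct and simply makes explicit what the paper compresses into one line.
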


\begin{proof}
We make the ansatz $H_\epsilon(y)=H_0(y)+\epsilon H_1(y)+\epsilon^2H_2(y)+\cdots$. Using this ansatz and the result from Lemma \ref{lem:asympCeps} in \eqref{eq:var_fold_asymp_full} we get a hierarchy of algebraic equations at different orders
\beann
0&=& 1-4h_0(y)H_0(y)\\
\frac{dH_{k-1}}{dy}&=& -4\sum_{i,j:~i+j=k}H_i(y)h_j(y)
\eeann 
where $k\in\{1,2,\ldots\}$. The result \eqref{eq:sol_var_fold} follows by direct calculation.
\end{proof}

We expect that the expansion up to fourth order of $H_\epsilon$ is sufficient for all practical purposes. Recall from the end of Section \ref{sec:SDE_fs} that the condition $\epsilon^{k_0+\alpha}H_{k_0}(y)=\cO( \sigma)$ determines whether terms of the expansion for $H_\epsilon$ can be moved to the higher-order correction $\cO(\sigma^3/\epsilon^{\alpha})$. Proposition \ref{prop:var_fold} yields the conditions 
\be
\label{eq:hide_const}
\frac{\epsilon^{k_0+\alpha}}{y^{(3k_0+1)/2}}=\cO(\sigma) \qquad \text{for $k_0\in\{1,2,\ldots\}$}.
\ee
For the fold bifurcation, we know that the critical scaling of $y$ to stay inside the normally hyperbolic regime is $y\sim \epsilon^{2/3}$; see Section \ref{sec:fast_slow} and assumption (A0) as well as Lemma \ref{lem:asympCeps}. Suppose $y\sim \epsilon^{2\alpha}$ for some $\alpha<1/3$ and use the scaling in \eqref{eq:hide_const} for $k_0=1$ then $\epsilon^{1-3\alpha}=\cO(\sigma)$ is the condition to move all slow manifold correction terms into the higher-order for the variance estimate; for $\alpha =0$ this condition obviously reduces to the known fact $\epsilon=\cO(\sigma)$ from equation \eqref{eq:move_everything}.

Using the critical scaling $2\alpha=2/3$ in \eqref{eq:hide_const} we get the condition $\epsilon^{1-1}=1= \cO(\sigma)$ which can never hold under assumption that $\sigma(\epsilon)\ra 0$ as $\epsilon \ra 0$ by assumption (A4). Therefore, the slow manifold approximation in (R1) obtained by the linearized process $\xi_s$ for the moments is not valid in (R2).

\section{Applications}
\label{sec:applications}

We are going to present five applications to illustrate the previous results. We also indicate how novel conclusions about the applications follow from the theory. 

\subsection{A Climate Box-Circulation Model}
\label{ssec:climate}
 
The Stommel model \cite{Stommel1} describes the \texttt{North Atlantic Thermohaline Circulation (THC)} by two boxes $B_1$ and $B_2$ representing low and high latitudes respectively. An atmospheric freshwater flux and differences in insolation can induce temperature and salinity differences $\Delta T = T_1-T_2$ and $\Delta S=S_1-S_2$. The resulting system has an Atlantic northward surface current and an Atlantic southward bottom current. For a version of Stommel's box model \cite{Cessi} it can be shown that $(\Delta T,\Delta S)$ obey a two-dimensional fast-slow system where the temperature difference represents the fast variable \cite{BerglundGentz5}. After reduction to an attracting slow manifold and a re-scaling of the variables the dynamics reduces to 
\be
\label{eq:SDE_climate}
\dot{Y}=\mu-Y\left(1+\eta^2(1-Y)^2\right)
\ee 
where $Y$ represents the salinity difference, we fix $\eta^2=7.5$ and $\mu$ is a parameter proportional to the atmospheric freshwater flux. Obviously the freshwater flux can also be viewed as a dynamical variable and we assume that it changes slower than $Y$. Furthermore we assume that \eqref{eq:SDE_climate} is subject to small stochastic perturbations which is reasonable if we decide not to model the system in more detail. Setting $x:=Y$ and $y:=\mu$ we get another two-dimensional fast-slow system
\be
\label{eq:main_box_model}
\begin{array}{rcl}
 dx_s &=& \frac1\epsilon\left[y_s-x_s(1+7.5(1-x_s)^2)\right]ds +\frac{\sigma}{\sqrt\epsilon} F(y_s)dW_s,\\
 dy_s &=& g(x_s,y_s) ds.\\
\end{array}
\ee 
The deterministic critical manifold is $C_0=\{(x,y)\in\R^2:y=x(1+7.5(1-x)^2)=:h_0(x)\}$, which is immediately recognized as a classical \texttt{S-shaped} (or \texttt{cubic}) fast subsystem nonlinearity. There are two fold points (fast subsystem fold bifurcations) at 
\benn
(x^-,y^-)=\left(\frac{1}{15}(10-\sqrt{15}),\frac{11}{9}+\frac{1}{\sqrt{15}}\right)\qquad \text{and} \qquad
(x^+,y^+)=\left(\frac{1}{15}(10+\sqrt{15}),\frac{11}{9}-\frac{1}{\sqrt{15}}\right).
\eenn
The critical manifold splits into three parts $C_0^{a,-}:=C_0\cap \{x<x_-\}$, $C_0^{r}:=C_0\cap \{x^-<x<x^+\}$, and $C_0^{a,+}:=C_0\cap \{x>x^+\}$ where $C_0^{a,\pm}$ are attracting and $C_0^r$ is repelling. The lower branch $C_0^{a,-}$ represents small salinity difference which corresponds to a weak THC. The upper branch $C_0^{a,+}$ corresponds to a strong THC which can be viewed as the present state of the climate. A critical transition from a strong to a weak THC would mean a significant cooling of the mild European climate. Therefore, we shall focus on the critical transition $(x^+,y^+)$ with initial conditions on $C^{a,+}_0$. The initial condition will be fixed at $(x_0,y_0)=(x_0,3/2)\in C_0^{a,+}$ which roughly corresponds to the \texttt{drop point} \cite{MKKR} on the upper attracting critical manifold after a transition at $(x^-,y^-)$.

\begin{figure}[htbp]
\centering
\psfrag{x}{$x$}
\psfrag{xd}{$xd$}
\psfrag{y}{$y$}
\psfrag{t}{$t$}
\psfrag{a}{(a)}
\psfrag{b}{(b)}
\psfrag{c}{(c)}
\psfrag{d}{(d)}
 \includegraphics[width=1\textwidth]{./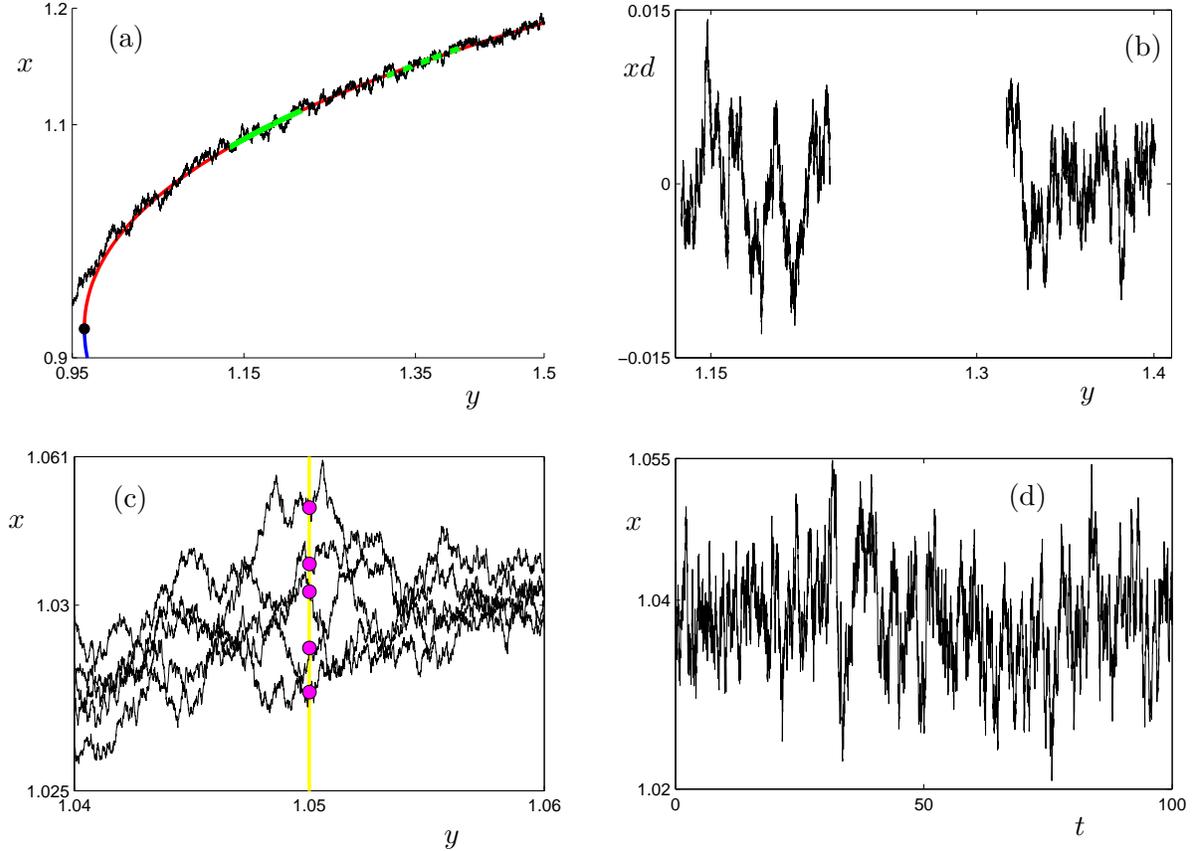}
\caption{\label{fig:fig5}Illustration of the different techniques (M1)-(M4) to approximate the variance $\text{Var}(x(y))$; we use the Stommel-Cessi model \eqref{eq:main_box_model} with parameters given in \eqref{eq:para_Cessi}. (a) Typical time series (black) near the attracting critical manifold $C^{a,+}$ (red) up to the fold point $(x^+,y^+)$ (black dot). We also show two sliding windows (green) where the dashed green line is a linear trend and the solid green line is given by $C^{a,+}_0$ i.e. the green curves are used for linear and CM detrending respectively. (b) Detrended time series $xd$ from (a) corresponding to the two (green) sliding windows. (c) Zoom near $y=1.05$, 5 sample paths are shown. The dots (magenta) mark the five points of the paths at $y=1.05$. To calculate the variance $\text{Var}(x(y=1.05))$ one simulates many paths. (d) Simulation of the fast subsystem of \eqref{eq:main_box_model} with y=1.05 for a fixed fast time $t\in[0,100]$.}
\end{figure}

We start by simulating \eqref{eq:main_box_model} using an Euler-Maruyama method \cite{Higham} using
\be
\label{eq:para_Cessi}
\epsilon=0.01,\qquad \sigma=0.01, \qquad F(y)\equiv 1, \qquad g(x,y)\equiv-1.
\ee
where the assumptions on $g$ mean that one may also interpret $y$ as a time variable. A typical sample path is shown in Figure \ref{fig:fig5}(a); the path is stopped at a final value $y=0.95$. We want to estimate the variance $\text{Var}(y_s)$ from a time series  
\be
\label{eq:tseries}
y_0=y_{s_0},y_{s_1},\ldots,y_{s_N}=0.95, \qquad x_{s_0}, x_{s_1}, \ldots,x_{s_N}.
\ee
The values $x_{s_j}=:x_j$ can be viewed as functions of $y$ since $y_s=(s-s_0)+y_0$ and we indicate this by writing $\text{Var}(x(y)):=\text{Var}(x_s)$. The goal is to estimate the variance. There are several possibilities to extract an approximation:

\begin{enumerate}
 \item[(M1)] Consider a single time series. Select a \texttt{moving window} of fixed length $M$ and compute the sample variance
for $M+1$ consecutive points $x_j,\ldots,x_{j+M}$; see Figure \ref{fig:fig5}(a)-(b). This provides an estimate for the variance $\text{Var}(x(y))$ roughly at the midpoint of the moving window $\frac1M\sum_{k=0}^M y_{j+k}$. The idea is that if the window is sufficiently small and we have sufficiently many data points inside each window then we can calculate a good approximation to $\text{Var}(x(y))$ for each $y$.
 \item[(M2)] Consider a single time series as for (M1). We can remove a given \texttt{trend} from \eqref{eq:tseries} before calculating the variance. For example, interpolating \eqref{eq:tseries} linearly and subtracting the resulting linear function from the time series yields a variance estimate with \texttt{linear detrending}. Another natural possibility is to remove the critical manifold as a trend; we call this \texttt{critical manifold (CM) detrending}. See also Figure \ref{fig:fig5}(a)-(b).
 \item[(M3)] Another possibility is to consider a large number $R$ of time series $x^{(r)}_{0}, x^{(r)}_{1}, \ldots,x^{(r)}_{N}$ for $r\in\{1,2,\ldots,R\}$ and then calculate the variance $\text{Var}(x(y_j))$ at $y_j$ as the sample variance of $\{x^{(1)}_{j}, x^{(2)}_{j}, \ldots,x^{(R)}_{j}\}$. This idea is illustrated in Figure \ref{fig:fig5}(c) and avoids the moving window technique. However, it does require multiple time series passing near the same critical point.
\item[(M4)] Instead of simulating the entire SDE \eqref{eq:main_box_model} we can also assume that $y=y_j$ is constant, simulate the fast subsystem for a sufficiently long time and then calculate $\text{Var}(x(y_j))$ from this fast subsystem time series; see Figure \ref{fig:fig5}(d).
\end{enumerate}

\begin{figure}[htbp]
\centering
\psfrag{a}{(a)}
\psfrag{b}{(b)}
\psfrag{c}{(c)}
\psfrag{d}{(d)}
\psfrag{e}{(e)}
\psfrag{f}{(f)}
\psfrag{x}{$x$}
\psfrag{y}{$y$}
\psfrag{V}{$V$}
 \includegraphics[width=1\textwidth]{./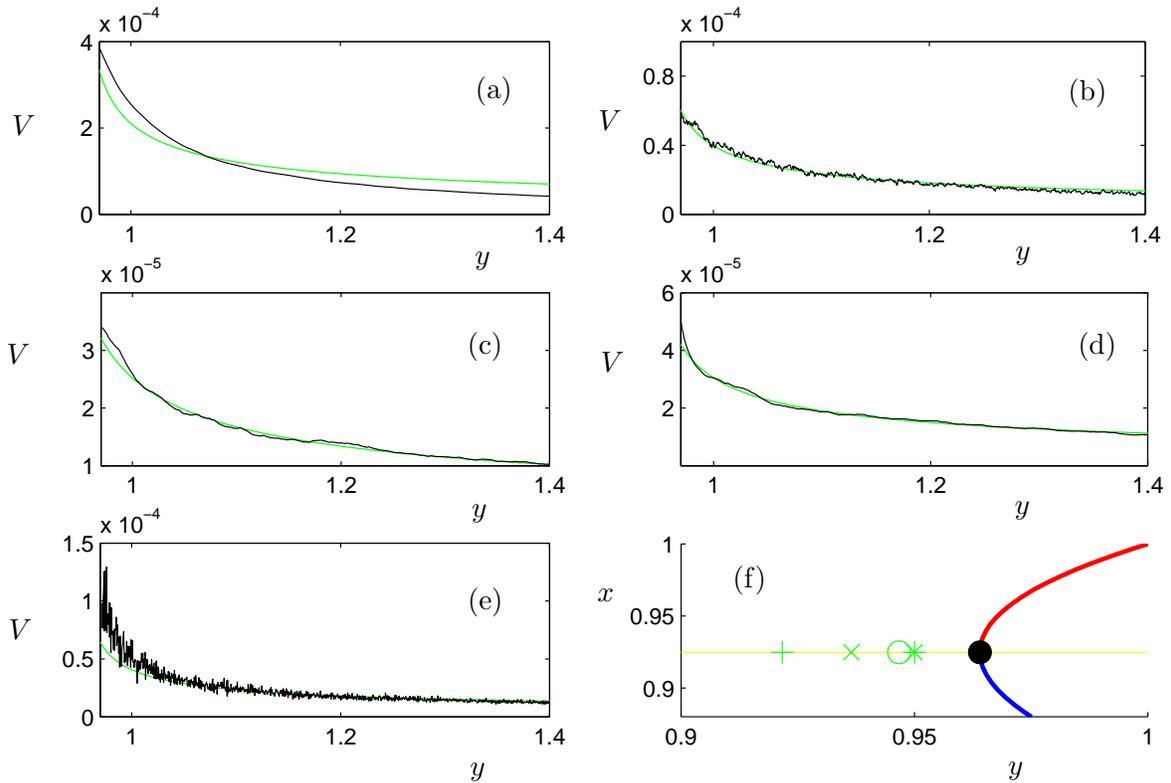}
\caption{\label{fig:fig6}Comparison of different methods to estimate the variance $V=\text{Var}(x(y))$ for the Stommel-Cessi model \eqref{eq:main_box_model} with parameters given in \eqref{eq:para_Cessi}. The black curves in (a)-(e) indicate the variance estimate and the green curves are obtained by least squares fit of \eqref{eq:ls_fit}. (a) Sliding window technique (M1) without detrending, average over 1000 sample paths. (b) Sample paths ``pointwise variance'' (M3), average over 1000 sample paths. (c) Sliding window with linear detrending (M2), average over 100 sample paths. (d) Sliding window with CM detrending, average over 100 sample paths. (e) Fast subsystem simulation (M4) for a fast time $t\in[0,100]$. (f) The critical manifold (red/blue) with fold point (black) is shown. The green markers indicate the estimators for $y_c$ from a least squares fit of \eqref{eq:ls_fit} plotted at the same x-value as the fold point; the green star ``*'' is the lower bound estimate for $y_c$ from (a) and (e), the green circle ``o'' marks $y_c$ for (b), the green plus ``+'' corresponds to (c) and the green ``x'' marks $y_c$ for (d).}
\end{figure}

Each of the methods (M1)-(M4) has different advantages and disadvantages. A direct sample variance measurement using the sliding window technique (M1) does include the curvature of the critical manifold in the estimate as demonstrated in \cite{KuehnCT1}. Linear detrending requires no a priori knowledge about the dynamics but can obviously not remove curvature near the fold point. CM detrending corresponds to the change of variable $\tilde{\xi}_s:=x_s-h_0(y_s)$ which is closest to the theoretical situation discussed in Sections \ref{sec:SDE_fs}-\ref{sec:fold_asymp}. However, this requires a priori knowledge of the critical manifold. The method (M3) requires many sample paths which is a restriction while the method (M4) requires the ability to simulate/measure the fast subsystem for a long time. In Figure \ref{fig:fig6} we compare the different methods for the Stommel-Cessi model. Figures \ref{fig:fig6}(a)-(e) provide the variance estimates together with a least squares fit of 
\be
\label{eq:ls_fit}
\text{Var}(x(y))=\frac{A}{\sqrt{y-y_c}}
\ee
with fitting parameters $A$ and $y_c$. The results in Figure \ref{fig:fig6} show that all methods can capture the variance increase as predicted by the theory. The sliding window technique seems to deviate the most from the theory compared to the other four methods but it requires the least amount of data as one basically produces a plot similar to Figure \ref{fig:fig6}(a) with just a single time series. By fitting \eqref{eq:ls_fit} we also obtain an estimate for the critical transition point $y_c$ which is slightly delayed due to positive $\epsilon$. All techniques capture this effect. We get the estimate that $y_c\in[0.92,0.95]$ which is a very good prediction compared to direct simulations. Overall one may conclude that the theoretical predictions of variance increase near a fold point apply very well in the context of the Stommel-Cessi model \eqref{eq:main_box_model} and that the different time series analysis methods all have advantages as well as disadvantages depending on the situation. Obviously we do not make any claims about the real THC with our calculations as this requires the analysis of temperature data sets.  

\subsection{Epidemics on Complex Adaptive Networks}
\label{ssec:epidemics}

Consider a network of social contacts and a disease that can be spread via these contacts as described in \cite{GrossDLimaBlasius}. Individuals of the population correspond to \texttt{nodes (or vertices)} and social contacts correspond to undirected \texttt{links (or edges)}. Denote the total number of nodes by $N$ and the number of links by $K$ and assume that $N$ and $K$ are constant; define the \texttt{mean degree} $\mu:=2K/N$. The dynamical states of the nodes are either \texttt{susceptible} or \texttt{infected} giving classical \texttt{SIS dynamics}. If a link between an infected and a susceptible node exists then the susceptible node becomes infected with probability $p$ at each time step. Infected nodes recover to susceptible status with probability $r$. Susceptibles might try to change their connection from an infected node to a susceptible one. To model this effect we assume that the network is \texttt{adaptive} so that the topology of the network influences the dynamics of the nodes and vice versa. We use the following dynamical variables to describe the SIS adaptive network
\benn
\begin{array}{rclrl}
x_1&:=&\frac{\text{\# $\{$infected$\}$} }{N}&=& \text{``density of infected individuals''},\\
x_2&:=&\frac{\text{\# $\{$links between infected and infected$\}$} }{N}&=& \text{``per capita density of II-links''},\\
x_3&:=&\frac{\text{\# $\{$links between susceptible and susceptible$\}$} }{N}&=& \text{``per capita density of SS-links''}.\\
\end{array}
\eenn
Note that the density of susceptible individuals is $(1-x_1)$ and the per capita density of SI-links is $(\mu/2-x_2-x_3)$. To capture the full adaptive network dynamics one would have to take into account also triples (triangle subgraphs) and all other higher-order \texttt{(network) moments}. We use the \texttt{moment closure pair approximation} \cite{KeelingRandMorris} to express the higher-order moments in terms of $x$ which yields \cite{GrossDLimaBlasius}
\be
\label{eq:momc_Gross}
\begin{array}{rcl}
x_1'&=&p(\frac\mu2-x_2-x_3)-rx_1,\\
x_2'&=&p(\frac\mu2-x_2-x_3)\left(\frac{\frac\mu2-x_2-x_3}{1-x_1}+1\right)-2rx_2,\\
x_3'&=&(r+w)(\frac\mu2-x_2-x_3)-\frac{2p(\frac\mu2-x_2-x_3)x_3}{1-x_1}.\\
\end{array}
\ee
For our analysis we fix the following parameters
\be
\label{eq:para_Gross}
r=0.002,\qquad w=0.4, \qquad N=10^5, \qquad K=10^6 \quad \Rightarrow \mu=20.
\ee
Assume that $p$ is a slow variable and increases over time. For example, we could think of a virus that evolves towards a more infectious variant in time. Using the standard notation for slow variables we let $y:=p$ and assume $y'=\epsilon$. It is also reasonable to consider the scenario that the density of infected nodes and the link densities can exhibit stochastic fluctuations; in particular, this might lead to a model that is more realistic than the moment closure ODEs. Combining this assumption, the slow equation and \eqref{eq:momc_Gross} we get
\be
\label{eq:momc_Gross1}
\begin{array}{rcl}
dx_1&=&\frac1\epsilon\left[y(\frac\mu2-x_2-x_3)-rx_1\right] ds + \frac{\sigma_1}{\sqrt\epsilon}dW^{(1)} ,\\
dx_2&=&\frac1\epsilon\left[y(\frac\mu2-x_2-x_3)\left(\frac{\frac\mu2-x_2-x_3}{1-x_1}+1\right)-2rx_2\right]ds +\frac{\sigma_2}{\sqrt\epsilon}dW^{(2)},\\
dx_3&=&\frac1\epsilon\left[(r+w)(\frac\mu2-x_2-x_3)-\frac{2y(\frac\mu2-x_2-x_3)x_3}{1-x_1}\right]ds+\frac{\sigma_3}{\sqrt\epsilon} dW^{(3)},\\
dy&=& 1~ds,\\
\end{array}
\ee
where we omit the subscript $s$ for $x_s$ and $W_s=(W^{(1)}_s,W^{(2)}_s,W^{(3)}_s)^T$ for notational convenience. Although the algebraic expression for the deterministic critical manifold $C_0$ of \eqref{eq:momc_Gross1} can be computed we shall only focus on the subset 
\benn
C^*_0:=\left\{(x,y)\in\left([0,1]\times [0,\mu/2]^2\right)\times [0,1]:x_1=0=x_0,x_3=\frac\mu2 \right\}\subset C_0.
\eenn 
The solution $x_1=0=x_2$ and $x_3=\mu/2$ corresponds to an equilibrium point of \eqref{eq:momc_Gross} with no infected nodes that can also be obtained by considering the initialization of the network as a random graph \cite{GrossDLimaBlasius}. The fast subsystem linearization around $C_0^*$ is given by 
\be
\label{eq:lin_Gross}
D_xf|_{C_0^*}=\left(
\begin{array}{ccc} 
-r & -y & -y \\
0 & -y-2r & -y\\
0 & y\mu-r-w & y\mu-r-w\\
\end{array}
\right).
\ee

\begin{figure}[htbp]
\centering
\psfrag{Cr}{$C_0^{*r}$}
\psfrag{Ca}{$C_0^{*a}$}
\psfrag{BP}{BP}
\psfrag{LP}{LP}
\psfrag{mH}{H}
\psfrag{x1}{$x_1$}
\psfrag{x2}{$x_2$}
\psfrag{x3}{$x_3$}
\psfrag{y}{$y$}
 \includegraphics[width=1\textwidth]{./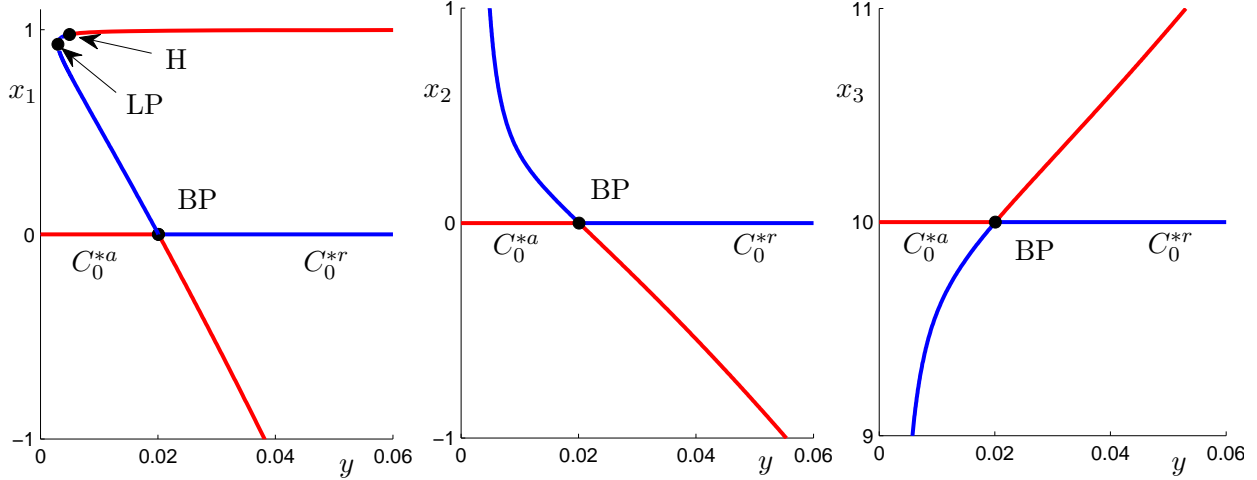}
\caption{\label{fig:fig2}Parts of the critical manifold $C_0$ for the SIS-model \eqref{eq:momc_Gross1} where attracting branches are red and repelling branches are blue; parameters are given by \eqref{eq:para_Gross}. The manifolds (fast subsystem equilibrium branches) have been computed using numerical continuation \cite{MatCont}. A transcritical bifurcation (branch point, [BP]) is detected at $y=y_c=0.0201$. For the number of infected nodes we show the continuation of $C_0$ away from the branch point; it undergoes a fold bifurcation (limit point, [LP]) and stabilizes at a supercritical Hopf bifurcation [H].}
\end{figure}

Using the parameter values \eqref{eq:para_Gross} and \eqref{eq:lin_Gross} we can easily calculate that a single eigenvalue of \eqref{eq:lin_Gross} crosses the imaginary axis at $y=y_c=0.0201$. Another direct calculation shows that $C_0^*$ splits into two subsets $C_0^{*a}=\{y<y_c\}\cap C_0^*$ and $C_0^{*r}=\{y>y_c\}\cap C_0^*$ where $C_0^{*a}$ is normally hyperbolic attracting and $C_0^{*r}$ is normally hyperbolic repelling. Note that the fast subsystem bifurcation of the trivial solution $C_0^*$ to \eqref{eq:momc_Gross} suggests a transcritical or a pitchfork bifurcation. In Figure \ref{fig:fig2} we show part of the critical manifold $C_0$ including the trivial solution $C_0^*$; the computation has been carried out using numerical continuation \cite{MatCont}. Figure \ref{fig:fig2} shows that the bifurcation is transcritical and $y=y_c$ is the infection probability threshold.

\begin{figure}[htbp]
\centering
\psfrag{a}{(a)}
\psfrag{b}{(b)}
\psfrag{c}{(c)}
\psfrag{d}{(d)}
\psfrag{e}{(e)}
\psfrag{f}{(f)}
\psfrag{x1}{$x_1$}
\psfrag{x2}{$x_2$}
\psfrag{x3}{$x_3$}
\psfrag{y}{$y$}
\psfrag{V}{$V$}
\psfrag{V-}{$\frac{1}{\bar{V}}$}
\psfrag{Vb}{$\bar{V}$}
 \includegraphics[width=1\textwidth]{./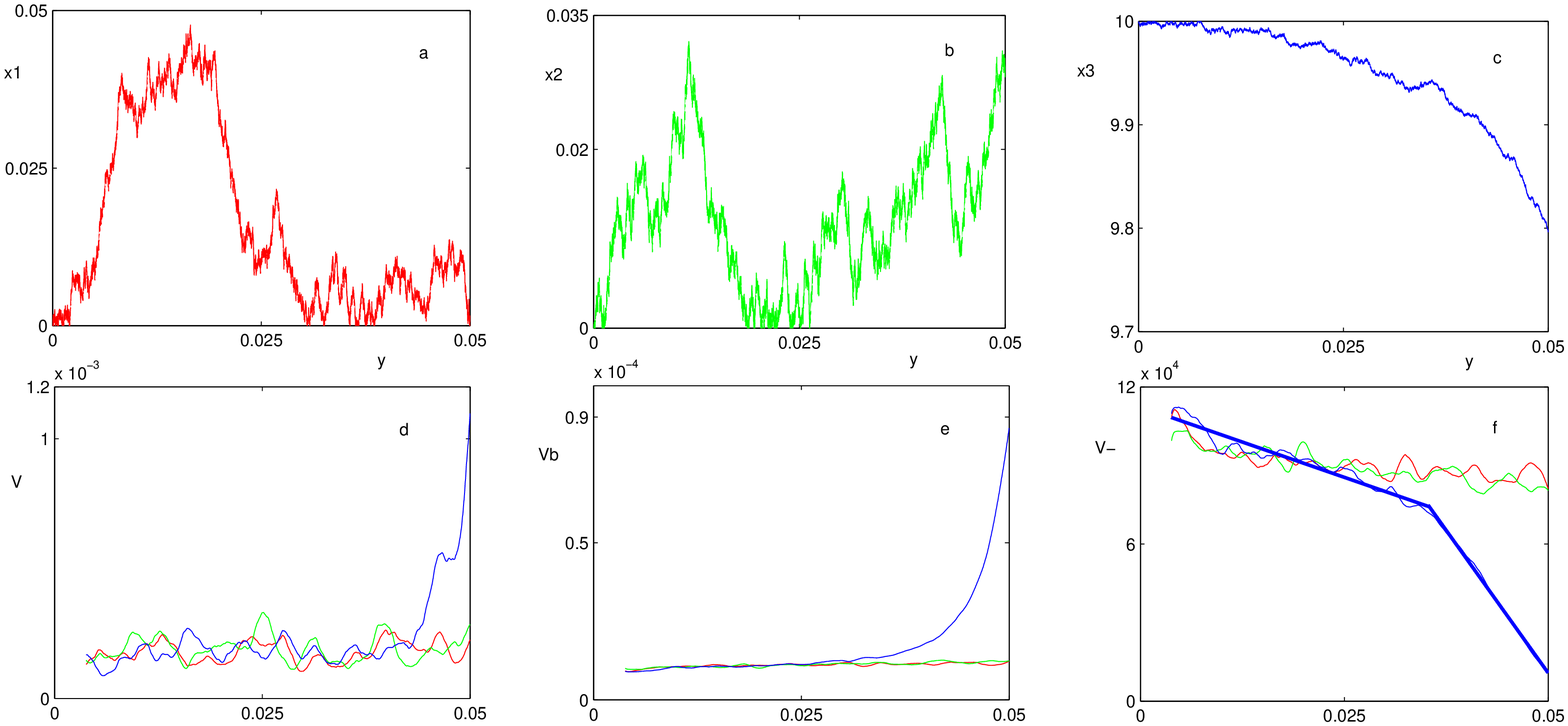}
\caption{\label{fig:fig3}Simulation results for \eqref{eq:momc_Gross1} with boundary conditions to constrain $x=(x_1,x_2,x_3)\in[0,1]\times[0,\mu/2]^2$; parameter values are given in \eqref{eq:para_Gross} and $(\sigma_1,\sigma_2,\sigma_3,\epsilon)=(0.01,0.01,0.01,0.005)$. (a)-(c) show a time series and (d) shows the associated variance of this series calculated by a sliding window technique. (e) Average $\bar{V}$ of the sliding-window variance for 1000 sample paths; we see that $\bar{V}_3$ shows an increase near the bifurcation. (f) Inverse of averaged variance $1/\bar{V}$ where we clearly see that $\bar{V}_3$ scales like $1/(y_c^\epsilon-y)$ up to a delayed epidemic threshold $y_c^\epsilon$. We also show two linear fits to $(\bar{V}_3)^{-1}$, one before the threshold (early-warning regime) and one after the threshold (start of critical transition). The actual full epidemic outbreak is not shown in the plot and occurs roughly between $y=0.05$ and $y=0.07$.}
\end{figure}

For direct simulation of \eqref{eq:para_Gross} one has to ensure that $x\in[0,1]\times[0,\mu/2]^2$ as the densities are constrained. Therefore, we set a point that lands outside of the domain at a time step to its associated boundary value, e.g. if $x_1(s_j)<0$ for some numerical time step $s_j$ then we set $x_1(s_j)=0$. This simulation is formally outside of the theory developed in Sections \ref{sec:fast_slow}-\ref{sec:fold_asymp}. Nevertheless, Figure \ref{fig:fig3} shows that the theoretical results are useful. Figure \ref{fig:fig3}(a)-(c) shows a typical sample path and we see that the $x_3$-coordinate in (c) starts to decrease beyond the singular limit critical point whereas the other two variables do not show any recognizable trend in (a)-(b). It is interesting to note that the density of infected individuals does not seem to play a role as an early-warning sign for the epidemic outbreak.

Figure \ref{fig:fig3}(d) shows the variance $V=(V_1,V_2,V_3)=(\text{Var}(x_1),\text{Var}(x_2),\text{Var}(x_3))$ associated to the sample path in (a)-(c) by using a sliding window technique; the size of the sliding window corresponds to the gap in the curves near $y=0$. Figure \ref{fig:fig3}(e) shows an average variance $\bar{V}_i$ for $i=\{1,2,3\}$ over 1000 sample paths. Observe that $x_3$ is the best predictor variable and this leads to the conjecture that the increase in variance should scale like the inverse of the distance to the critical transition; see Figure \ref{fig:fig3}(f). Note that the critical transition at $y=y_c$ for $\epsilon=0$ is delayed due to the time scale separation \cite{KuehnCT1}. We conclude from our results that it is crucial what property of a complex system we actually \emph{measure} to make predictions. Indeed, the SIS-epidemic model suggests that measuring the variance in links can be much more important than just the number of infected individuals. Furthermore, the technique we developed here can also be applied to adaptive networks in completely different contexts \cite{GrossSayama,KuehnZschalerGross}. 

\subsection{A Switch in Systems Biology}
\label{ssec:SysBio}

To understand complex molecular networks one often seeks to construct models of simpler building blocks of the network. These building blocks are composed of genes and proteins and can often act as various kinds of ``switches'' inside a more complex system. Dynamical systems methods for these systems biology questions are a highly active research area \cite{Brackleyetal}. Low-dimensional dynamical systems have been proposed to model the smallest units in a molecular network. A typical example is the \texttt{activator-inhibitor} system. Suppose the \texttt{activator} species $R$ is produced in an autocatalytic reaction but rising $R$ also promotes the production of an \texttt{inhibitor} species $X$. More concretely, one may think of both species $(R,X)$ as concentrations of proteins. Activator-inhibitor systems incorporate positive and negative feedback which can lead to oscillations. One model proposed for activator-inhibitor oscillators \cite{TysonChenNovak} is
\be
\label{eq:ActInh}
\begin{array}{lcl}
R'&=& k_0G(k_3R,k_4,J_1,J_2)+k_1S-k_2R-k_7XR\\
X'&=& k_5R-k_6X\\
\end{array}
\ee
where the \texttt{Goldbeter-Koshland function} $G$ \cite{GoldbeterKoshland,NovakPatakiCilibertoTyson} is
\benn
G(u,v,J,K)=\frac{2uK}{v-u+vJ+uK+\sqrt{(v-u+vJ+uK)^2-4(v-u)uK}}
\eenn
and $k_j$ for $j\in\{1,2,3,4,5,6,7\}$, $J_i$ for $i\in\{1,2\}$ and $S$ are parameters. The main bifurcation parameter is the \texttt{signal strength} $S$ which can be viewed as an external input to the system \eqref{eq:ActInh}. We are going to fix the other parameters following \cite{TysonChenNovak} as
\benn
k_0=4, \quad k_1=k_2=k_3=k_4=k_7=1, \quad k_5=0.1, \quad k_6=0.075, \quad J_1=J_2=0.3.
\eenn
 
\begin{figure}[htbp]
\centering
\psfrag{x1}{$x_1$}
\psfrag{x2}{$x_2$}
\psfrag{y}{$y$}
\psfrag{C0}{$C_0$}
\psfrag{LPC}{LPC}
\psfrag{H}{H}
 \includegraphics[width=1\textwidth]{./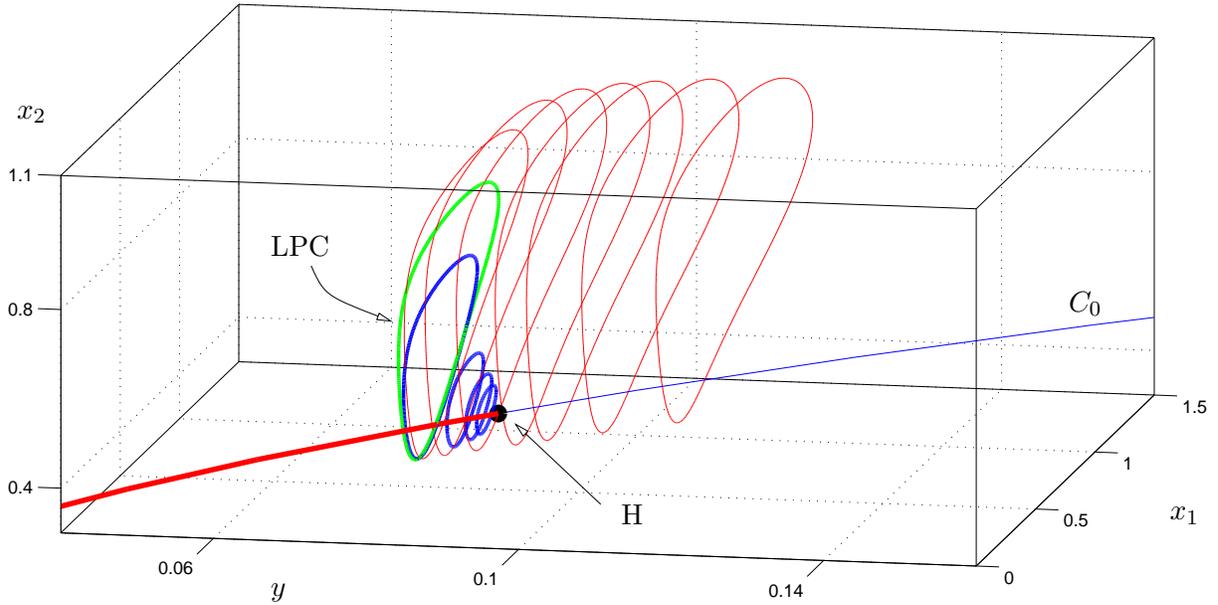}
\caption{\label{fig:fig7}Dynamics for $\epsilon=0$ for the deterministic version of the activator-inhibitor system \eqref{eq:Tyson}. The critical manifold $C_0$ is the red-blue curve which looses normal hyperbolicity at a fast subsystem subcritical Hopf bifurcation (black dot, [H]) at $y\approx 0.09146$. The generated small limit cycles (blue) are first repelling and then undergo a fold (or saddle-node, or limit point [LPC]) bifurcation; the large fast subsystem limit cycles (red) are attracting. A critical transition occurs near the Hopf bifurcation as trajectories leave the critical manifold and jump to a large limit cycle. See also Figure \ref{fig:fig8} for the fast subsystem phase portraits.}
\end{figure}

\begin{figure}[htbp]
\centering
\psfrag{x1}{$x_1$}
\psfrag{x2}{$x_2$}
\psfrag{a}{(a)}
\psfrag{b}{(b)}
\psfrag{c}{(c)}
 \includegraphics[width=1\textwidth]{./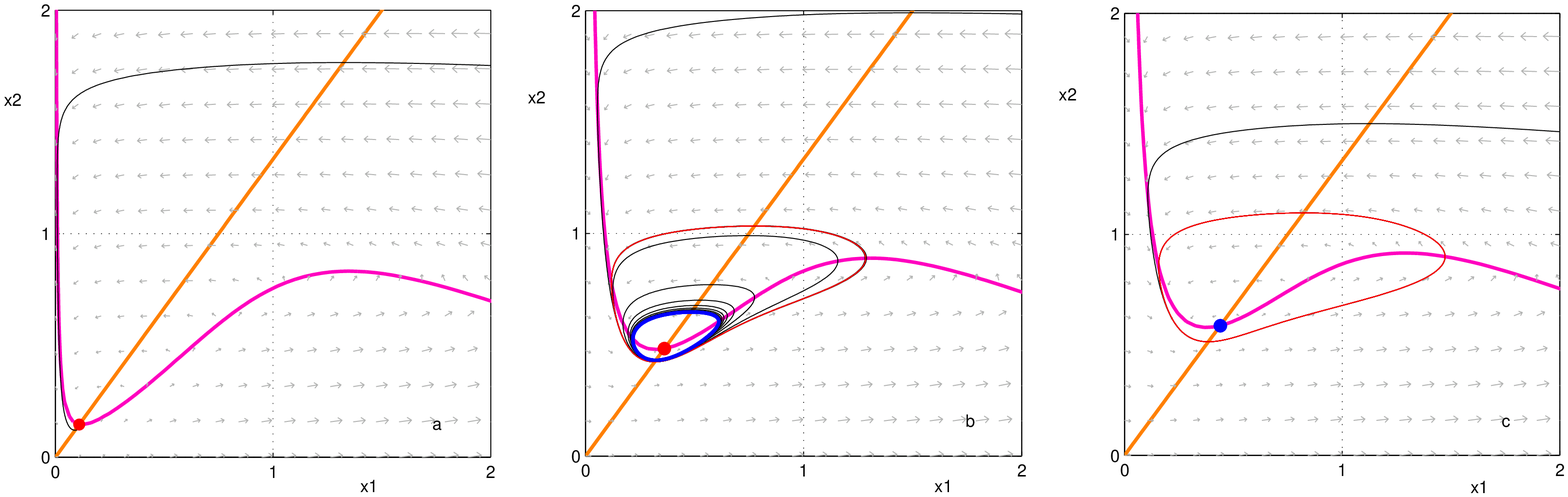}
\caption{\label{fig:fig8}Illustration of the subcritical Hopf bifurcation for the deterministic fast subsystem of \eqref{eq:Tyson}; equivalently the results apply to \eqref{eq:ActInh} with $(R,X)=(x_1,x_2)$. Nullclines are shown in magenta for $x_1$ and in orange for $x_2$. Trajectories are black and invariant sets are red (stable) and blue (unstable). (a) $y=0.01$: The system has a stable spiral sink. (b) $y=0.085$: In addition to the spiral sink there exist a small unstable and large stable limit cycle. (c) $y=0.12$: The equilibrium point is a spiral source and only the large stable limit cycle exists.}
\end{figure}

Let us consider the case when the external input $S$ is a slow signal that starts out sufficiently low so that no oscillations occur for \eqref{eq:ActInh}. Then we let $S=:y$ increase until a transition to large oscillations is observed. It is reasonable to assume that the variables $(R,X)=:(x_1,x_2)$ are stochastic with correlated noise. Under these assumptions we can write \eqref{eq:ActInh} as the SDE
\be
\label{eq:Tyson}
\begin{array}{lcl}
dx_1&=&\frac1\epsilon\left[ 4G(x_1,1,0.3,0.3)+y-x_1-x_1x_2 \right]ds +\frac{\sigma}{\sqrt\epsilon}\left(F_{11}dW^{(1)}+F_{12}dW^{(2)}\right),\\
dx_2&=& \frac1\epsilon\left[0.1x_1-0.075x_2\right]ds+\frac{\sigma}{\sqrt\epsilon}\left(F_{21}dW^{(1)}+F_{22}dW^{(2)}\right),\\
dy&=& 1 ~ds.\\
\end{array}
\ee
The critical manifold $C_0$ for the deterministic part of \eqref{eq:Tyson} is given by
\benn
C_0=\left\{(x_1,x_2,y)\in\R^{3}:x_2=\frac43 x_1,y=x_1+\frac43 x_1^2-4G(x_1,1,0.3,0.3)\right\}.
\eenn
It is easy to check that the critical manifold is attracting for $y<y_{H,1}$ and $y>y_{H,2}$ and repelling for $y_{H,1}<y<y_{H,2}$ where $y_{H,1}\approx 0.091462$ and $y_{H,2}\approx 0.440903$ are fast subsystem Hopf bifurcation points. We focus on the subcritical Hopf bifurcation at $y=y_{H,1}$. Figure \ref{fig:fig7} shows an illustration of the singular limit dynamics near this Hopf bifurcation point. Repelling fast subsystem limit cycles are generated at the Hopf bifurcation. These cycles undergo a further fold (or saddle-node, or limit point) bifurcation to attracting cycles which grow rapidly. By looking at the phase plane  of the fast subsystem in Figure \ref{fig:fig8} we observe that the $x_1$-nullcline can also be viewed as another critical manifold of the two-dimensional system $(x_1,x_2)$ where $x_2$ would be fast and $x_1$ be even faster which yields a three-scale system with canard explosion \cite{KrupaPopovicKopell}. The important outcome of this mechanism is that passing from the attracting critical manifold 
\benn
C^a_0:=\{(x_1,x_2,y)\in\R^3:y<y_{H,1}\}\cap C_0
\eenn
through the Hopf bifurcation produces a critical transition to large limit cycle oscillations. The critical transition can be viewed as an almost instantaneous switch to sustained oscillations. For the stochastic simulation we recall from the definition in equation \eqref{eq:defineN} that
\benn
N=\left(\begin{array}{cc} N_{11} & N_{12} \\ N_{12} & N_{22} \\\end{array}\right)=
\left(\begin{array}{cc} F_{11}^2+F_{12}^2 & F_{11}F_{21}F_{12}F_{22} \\ F_{11}F_{21}F_{12}F_{22} & F_{21}^2+F_{22}^2 \\\end{array}\right).
\eenn

\begin{figure}[htbp]
\centering
\psfrag{y}{$y$}
\psfrag{V1}{$V_1$}
\psfrag{V2}{$V_2$}
\psfrag{V3}{$C_{12}$}
\psfrag{AI}{Act.-Inh. \eqref{eq:Tyson}}
\psfrag{Hopf}{Hopf n.f. \eqref{eq:Hopf_ex_nf}}
\psfrag{a1}{(a1)}
\psfrag{a2}{(a2)}
\psfrag{a3}{(a3)}
\psfrag{b1}{(b1)}
\psfrag{b2}{(b2)}
\psfrag{b3}{(b3)}
 \includegraphics[width=1\textwidth]{./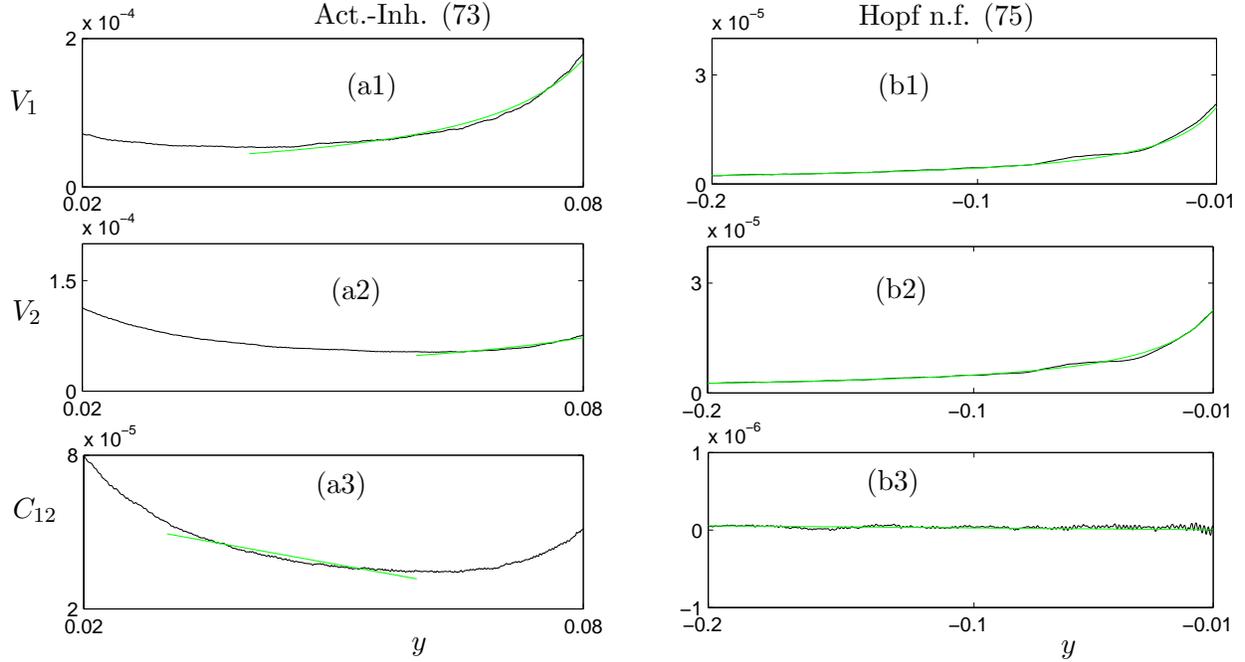}
\caption{\label{fig:fig9}The row labels denote $V_1=\text{Var}(x_1)$, $V_2=\text{Var}(x_2)$ and $C_{1,2}=\text{Cov}(x_1,x_2)$. (a1)-(a3) Parameter values are $\epsilon=10^{-5}$ and $\sigma=10^{-3}$ for \eqref{eq:Tyson}. (b1)-(b3) Parameter values are $\epsilon=5\times10^{-4}$ and $\sigma=10^{-3}$ for \eqref{eq:Hopf_ex_nf}. All figures have been computed from 100 sample paths by a sliding window technique (black curves). The variances have been fitted using \eqref{eq:fit_Hopf} and the covariance have been fitted linearly (green curves). We observe that the normal form corresponds perfectly to the theory but that the three-time scale structure of the activator-inhibitor system becomes visible in the variance and covariance measurements.}
\end{figure}

For numerical simulations fix $N_{11}=1=N_{22}$ and $N_{12}=0.2$. In Figure \ref{fig:fig9}(a1)-(a3) the variance and covariance near the subcritical Hopf bifurcation at $y=y_{H,1}$ for the activator-inhibitor system \eqref{eq:Tyson} are shown. The variances $\text{Var}(x_{1,2})$ have been fitted using
\be
\label{eq:fit_Hopf}
\text{Var}(x_{j}(y))=\frac{A}{y-y_c}, \qquad \text{for $j\in\{1,2\}$}
\ee
with fit parameters $A$ and $y_c$. The covariance has been fitted linearly. The variance of the fastest variable $\text{Var}(x_1(y))$ behaves approximately as predicted near the critical transition as $\cO(1/y)$. However, the variance $\text{Var}(x_2(y))$ of the slower variable $x_2$ does not show a clear increase and the covariance near the critical transition is not constant. This shows that the three-time scale structure requires a very careful analysis and a transformation to normal form would be needed to apply Theorem \ref{thm:CK2}. A prediction of the critical transition point can still work {e.g.}~using $\text{Var}(x_1(y))$ produces the estimate $y_c\approx 0.094$. We have also compared the activator-inhibitor results to a Hopf bifurcation normal form system
\be
\label{eq:Hopf_ex_nf}
\begin{array}{lcl}
dx_1&=&\frac1\epsilon\left[ yx_1-x_2+x_1(x_1^2+x_2^2)\right]ds +\frac{\sigma}{\sqrt\epsilon}\left(F_{11}dW^{(1)}+F_{12}dW^{(2)}\right),\\
dx_2&=& \frac1\epsilon\left[x_1+yx_2+x_2(x_1^2+x_2^2)\right]ds+\frac{\sigma}{\sqrt\epsilon}\left(F_{21}dW^{(1)}+F_{22}dW^{(2)}\right),\\
dy&=& 1 ~ds.\\
\end{array}
\ee
Figure \ref{fig:fig9}(b1)-(b3) shows the results which match Theorem \ref{thm:CK2} as expected. For the covariance there is a clear difference between \eqref{eq:Hopf_ex_nf} and the activator-inhibitor system; compare Figures \ref{fig:fig9}(a3) and \ref{fig:fig9}(b3). The increase of the covariance near the critical transition is not expected and might be related to deterministic rotation around the slow manifold $C^a_\epsilon$ i.e. the manifold is attracting but also a spiral sink of the fast subsystem; see also Section \ref{ssec:ecology} where another possible explanation is given.\\

To conclude, observe that the bifurcation structure displayed by \eqref{eq:Tyson} has a fast subsystem with an S-shaped (cubic) critical manifold which makes the results applicable also to typical neuroscience models such as bursting neurons \cite{Izhikevich,Rinzel}. Therefore, we have shown that subunits of molecular networks as well as neurons in neural networks do have information available that allows them to predict a future state without previous knowledge of the exact position of this state. Whether this predictive potential is actually used in a real molecular or neural network is far beyond the scope of this paper but certainly constitutes a fascinating question. For a recent application to excitable neuron models and epileptic seizures see \cite{MeiselKuehn}.

\subsection{A Predator-Prey Systems near Codimension Two Bifurcation}
\label{ssec:ecology}

Sudden shifts in ecosystems have been a primary motivation to develop the theory of critical transitions \cite{SchefferCarpenter}. Recently also experimental evidence has been provided \cite{DrakeGriffen}. However, many studies seem to view fold critical transitions as the only relevant transition \cite{vanNesScheffer}. This viewpoint does not seem to be appropriate as codimension two (and higher codimension) bifurcations occur very frequently in ecological models \cite{BazykinKhibnikKrauskopf}. Here we focus on the analysis of a classical \texttt{predator-prey model} \cite{BazykinKhibnikKrauskopf}
\be
\label{eq:Kuznetsov_Baz}
\begin{array}{lcl}
x_1'&=& x_1-\frac{x_1x_2}{1+\alpha x_1}-\xi x_1^2,\\
x_2'&=&-\gamma x_2+\frac{x_1x_2}{1+\alpha x_1}-\delta x_2^2,\\
\end{array}
\ee
where $x_1$ represents prey, $x_2$ represents predators and $\alpha$, $\delta$, $\xi$, $\gamma$ are positive parameters. The bifurcation analysis of \eqref{eq:Kuznetsov_Baz} in the $(\alpha,\delta)$-parameter plane has been nicely described by Kuznetsov (see \cite{Kuznetsov}, p.327-332) under the assumptions $\gamma=1$ and $0<\xi\ll1$. For numerical simulation we fix $\gamma=1$ and $\xi=0.01$.

\begin{figure}[htbp]
\centering
\psfrag{y1}{$y_1$}
\psfrag{y2}{$y_2$}
\psfrag{R1}{$Q_1$}
\psfrag{R2}{$Q_2$}
\psfrag{R3}{$Q_3$}
\psfrag{LP}{LP}
\psfrag{H}{H}
\psfrag{BT}{BT}
 \includegraphics[width=0.85\textwidth]{./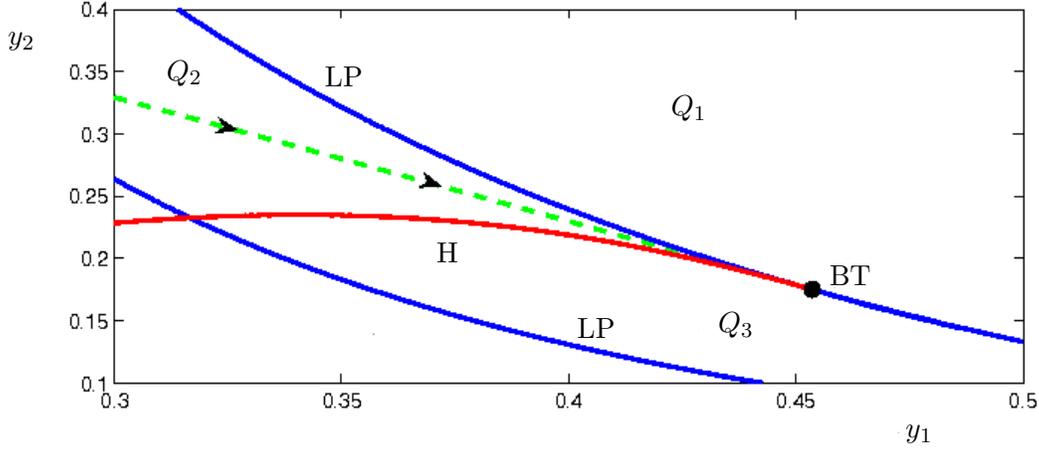}
\caption{\label{fig:fig10}Partial bifurcation diagram of the Bazykin predator-prey model \eqref{eq:Kuznetsov_Baz} with $\gamma=1$ and $\xi=0.01$. The parameters $(y_1,y_2)$ can be viewed as slow variables. The main organizing center in the diagram is the codimension-two Bogdanov-Takens (black dot, [BT]) point that occurs at a tangency of Hopf (red, [H]) and fold (blue, [LP]) bifurcation curves. Phase space diagrams for the different regions $Q_1$, $Q_2$ and $Q_3$ are shown in Figure \ref{fig:fig11}; note that $Q_3$ splits into two sub-regions by a homoclinic bifurcation curve which we do not show here. The dashed curve (green) shows a slow subsystem trajectory that approaches the BT point.}
\end{figure}

\begin{figure}[htbp]
\centering
\psfrag{y1}{$x_1$}
\psfrag{y2}{$x_2$}
\psfrag{R1}{$Q_1$}
\psfrag{R2}{$Q_2$}
\psfrag{R3}{$Q_3$}
 \includegraphics[width=0.98\textwidth]{./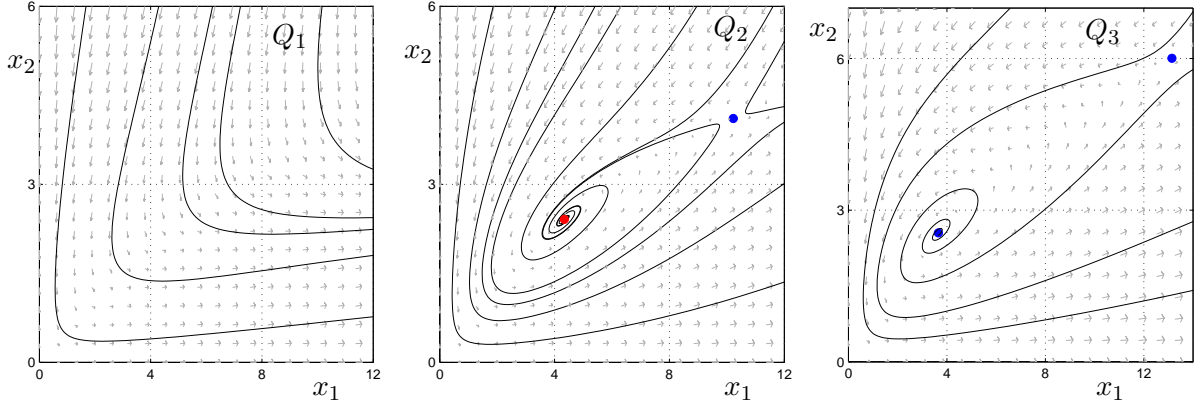}
\caption{\label{fig:fig11}Phase space diagrams for different parameter regions in Figure \ref{fig:fig10}; black curves are trajectories. $Q_1$: $(y_1,y_2)=(0.45,0.35)$; $Q_2$: $(y_1,y_2)=(0.35,0.3)$; $Q_3$: $(y_1,y_2)=(0.45,0.15)$. In $Q_1$ there is a stable spiral sink outside of the chosen range at $(x_1,x_2)\approx(92.12,3.34)$. A spiral sink equilibrium point also exists in $Q_2$ and $Q_3$ outside of the displayed ranges. In $Q_2$ we have a spiral sink (red dot) and a saddle point (blue dot) that correspond to attracting and saddle-type branches of the critical manifold. In $Q_3$ we have a spiral source and a saddle point corresponding to unstable and saddle-type critical manifolds.}
\end{figure}

We set $y_1:=\alpha$ and $y_2:=\delta$ to indicate that these parameters will be viewed as slow variables. Part of the bifurcation diagram for \eqref{eq:Kuznetsov_Baz} is shown in Figure \ref{fig:fig10}. Figure \ref{fig:fig10} shows two curves of fold bifurcations, which actually form a closed curve $c_{LP}$ (``isola'') in parameter space. This curve has a tangency with a supercritical Hopf bifurcation curve $c_H$ at a codimension-two Bogdanov-Takens (BT) point. We do not show the homoclinic bifurcation curve originating at the BT point in Figure \ref{fig:fig10}. The curves $c_{LP}$ and $c_{H}$ can be calculated explicitly \cite{Kuznetsov}. One simply uses the linearization of $D_xF(x^*)$ of \eqref{eq:Kuznetsov_Baz} at an equilibrium point $(x_1,x_2)=x^*$ and applies the conditions $\det(D_xF(x^*))=0$ and $\text{Tr}(D_xF(x^*))=0$; this gives
\benn
\begin{array}{llll}
c_{LP}&=&\{y\in\R^2:&4\xi(y_1-1)^3+((y_1^2-20y_1-8)\xi^2+2y_1\xi(y_1^2-11y_1+10)\\
&&&+y_1^2(y_1-1)^2)y_2-4(y_1+\xi)^3y_2^2=0\},\\
c_{H}&=&\{y\in\R^2:&4\xi(y_1(y_1-1)+\xi(y_1+1))+(2(\xi+1)y_1^2+(3\xi^2-2\xi-1)y_1\\
&&&+\xi(\xi^2-2\xi+5))y_2+(y_1+\xi-1)^2y_2^2\}.\\
\end{array}
\eenn
The Bogdanov-Takens point satisfies all genericity conditions required by assumption (A2) so that Lemma \ref{lem:BT} and Theorem \ref{thm:CK2} apply. The normal form coefficient is $s=-1$ in equation \eqref{eq:BT_nform}. Hence the only stable equilibrium point near the BT-point can be found between the Hopf and fold curves in region $Q_2$ in Figure \ref{fig:fig10}. Phase portraits for different regions are shown in Figure \ref{fig:fig11}. A stable limit cycle can occur between the Hopf and homoclinic bifurcation curves in region $Q_3$ but we ignore this possibility and restrict ourselves to critical transitions via fast subsystem stable equilibrium points. It is natural to assume that the populations $(x_1,x_2)$ are subject to stochastic fluctuations and to view $(y_1,y_2)$ as slow dynamic variables, changing slowly due to evolutionary or environmental effects. This converts \eqref{eq:Kuznetsov_Baz} into the SDE
\be
\label{eq:Bazykin}
\begin{array}{lcl}
dx_1&=& \frac1\epsilon\left[x_1-\frac{x_1x_2}{1+\alpha x_1}-\xi x_1^2\right] ds+\frac{\sigma_1}{\sqrt\epsilon}dW^{(1)},\\
dx_2&=&\frac1\epsilon \left[-\gamma x_2+\frac{x_1x_2}{1+\alpha x_1}-\delta x_2^2\right]ds +\frac{\sigma_2}{\sqrt\epsilon}dW^{(2)},\\
dy_1&=&g_1(x,y)ds,\\
dy_2&=&g_2(x,y)ds,\\
\end{array}
\ee
where we have assumed uncorrelated noise in the fast variables. The critical manifold $C_0$ of the deterministic version of \eqref{eq:Bazykin} has an attracting branch $C^a_0$ in the region $Q_2$ (see Figures \ref{fig:fig10} and \ref{fig:fig11}) corresponding to a spiral sink of the fast subsystem \eqref{eq:Kuznetsov_Baz}. We want to approach the Bogdanov-Takens critical transition via a slow flow inside the region $Q_2$. Figure \ref{fig:fig11} shows a dashed curve (green) which is a possible slow subsystem trajectory. It is part of a candidate $\gamma_0$ that undergoes a critical transition according to Lemma \ref{lem:BT}. In principle, we could try to embed such a candidate into an explicit slow flow $\dot{y}=g(x,y)=(g_1(x,y),g_2(x,y))^T$.

For numerical simulations of \eqref{eq:Bazykin} it will suffice to define a single trajectory $\gamma_0$ along which we approach the BT transition. We can obtain $\gamma_0$, for example, by polynomial interpolation of a suitable set points lying in $Q_2$ and the BT point. The initial condition for our numerical simulation is chosen as $(x_1,x_2,y_1,y_2)\approx(3.1544,1.8849,0.3,0.3293)$, where the y-coordinates lie on the dashed curve indicated in Figure \ref{fig:fig10} and the x-coordinates are on the attracting critical manifold $C^a_0$. Calculations have been carried out for 50 sample paths and the variance has been calculated via a moving window method for each path (see the gap in Figure \ref{fig:fig12}(a) for the window size) with linear detrending. Then the results is averaged over the 50 paths. Figure \ref{fig:fig12}(a) compares the variances $V_i=\text{Var}(x_i(y))$ for $i\in\{1,2\}$.

\begin{figure}[htbp]
\centering
\psfrag{Vi}{$V_i$}
\psfrag{y1}{$y_1$}
\psfrag{V1}{$V_1$}
\psfrag{V2}{$V_2$}
\psfrag{Vari}{$V_i=\text{Var}(x_i)$ $\downarrow$}
\psfrag{Fits}{Fits of $V_i$ $\rightarrow$}
\psfrag{a}{(a)}
\psfrag{b}{(b)}
\psfrag{c}{(c)}
 \includegraphics[width=1\textwidth]{./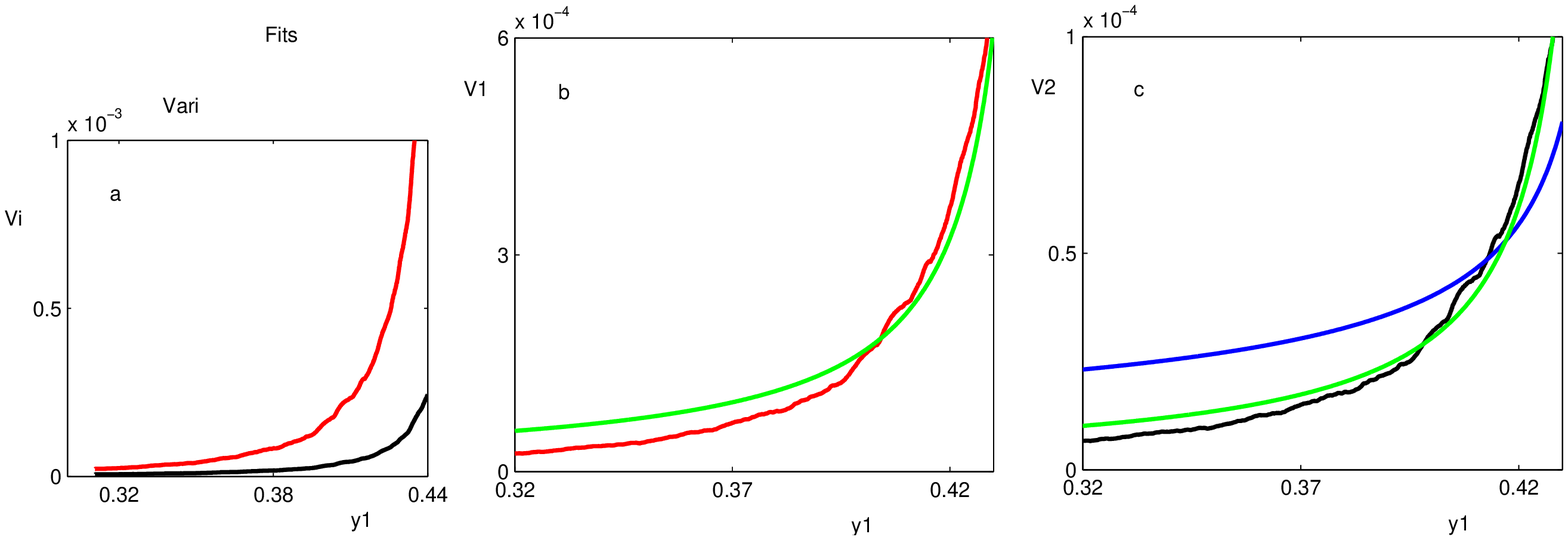}
\caption{\label{fig:fig12}Simulations averaged over 50 sample paths of the Bazykin predator-prey model \eqref{eq:Bazykin} with $\gamma=1$, $\xi=0.01$ and $(\epsilon,\sigma)=(3\times 10^{-5},1\times 10^{-3})$. (a) Variance curves $V_i=\text{Var}(x_i(y))$ for $i\in{1,2}$; the red curve corresponds to $V_1$ and the black curve to $V_2$. In (b) and (c) we repeat these curves and show different fits. The green curves correspond to \eqref{eq:fit_BT1} and the blue curve to \eqref{eq:fit_BT2}.}
\end{figure}

Figures \ref{fig:fig12}(b)-(c) show fits (green curves) of the variances
\be
\label{eq:fit_BT1}
\text{Var}(x_{i}(y))=\frac{A}{y_{1,c}-y_1}, \qquad \text{for $i\in\{1,2\}$}
\ee
and also an inverse square-root fit (blue curve) 
\be
\label{eq:fit_BT2}
\text{Var}(x_{2}(y))=\frac{A}{\sqrt{y_{1,c}-y_1}}
\ee
where $A$, $y_{1,c}$ are the fitting parameters. Note that \emph{both} variances increase like $\cO_y^*(1/(y_{1,c}-y_1))$ near the critical transition and that \eqref{eq:fit_BT1} is a good fit for $V_2$ while \eqref{eq:fit_BT2} is not. At first, this might look unexpected since the normal form analysis predicts one variance to increase like $\cO_y^*(1/\sqrt{y_{1,c}-y_1})$. However, equation \eqref{eq:Bazykin} is not in normal form. To explain the effect let us consider the Bogdanov-Takens normal form  
\be
\label{eq:BT_nform_new}
\begin{array}{lcl}
d\tilde{x}_1&=&\frac1\epsilon[\tilde{x}_2]ds+\frac{\sigma}{\sqrt\epsilon}F_1dW^{(1)},\\
d\tilde{x}_2&=&\frac1\epsilon[y_1+y_2\tilde{x}_2+\tilde{x}_1^2+s\tilde{x}_1\tilde{x}_2]ds+\frac{\sigma}{\sqrt\epsilon}F_2dW^{(2)},\\
\end{array}
\ee
with suitable slow variables $y=(y_1,y_2)$ so that we approach the critical BT-transition at $(\tilde{x},y)=(0,0)$. Consider a linear map
\benn
\left(\begin{array}{c} x_1 \\ x_2 \\\end{array}\right)= \left(\begin{array}{cc} b_{11} & b_{12} \\ b_{21} & b_{22}\\ \end{array}\right) \left(\begin{array}{c} \tilde{x}_1 \\ \tilde{x}_2 \\\end{array}\right)=B\left(\begin{array}{c} \tilde{x}_1 \\ \tilde{x}_2 \\\end{array}\right)
\eenn
where $B\in\R^{2\times 2}$ is invertible. We know that 
\benn
\text{Var}(\tilde{x}_1(y))=\cO_y^*\left( \frac{1}{y_1}\right),\qquad \text{Var}(\tilde{x}_2(y))=\cO_y^*\left( \frac{1}{\sqrt{y_1}}\right), \qquad \text{Cov}(\tilde{x}_1(y),\tilde{x}_2(y))=\cO_y^*(1)
\eenn
as $y_1\ra 0$. After applying the transformation $B$ a formal calculation yields
\beann
\text{Var}(x_1)&=&\text{Var}(b_{11}\tilde{x}_1+b_{12}\tilde{x}_2)=b_{11}^2\text{Var}(\tilde{x}_1)+b_{12}^2\text{Var}(\tilde{x}_1)+2b_{11}b_{12}\text{Cov}(\tilde{x}_1,\tilde{x}_2)\\
&=& \cO_y^*\left(\frac{1}{y_1}\right)+\cO_y^*\left(\frac{1}{\sqrt{y_1}}\right)+\cO_y^*(1)=\cO_y^*\left(\frac{1}{y_1}\right),\\
\text{Var}(x_2)&=&\text{Var}(b_{21}\tilde{x}_1+b_{22}\tilde{x}_2)=b_{21}^2\text{Var}(\tilde{x}_1)+b_{22}^2\text{Var}(\tilde{x}_1)+2b_{21}b_{22}\text{Cov}(\tilde{x}_1,\tilde{x}_2)\\
&=& \cO_y^*\left(\frac{1}{y_1}\right)+\cO_y^*\left(\frac{1}{\sqrt{y_1}}\right)+\cO_y^*(1)=\cO_y^*\left(\frac{1}{y_1}\right).
\eeann
This an explanation why both variances $V_i$ increase like $\cO_y^*(1/(y_{1,c}-y_1))$ in Figure \ref{fig:fig12}. The scaling law from the Hopf bifurcation dominates the scaling law from the saddle-node bifurcation near a codimension-two Bogdanov-Takens point when the system is not in normal form.

We conclude this section with some potential implications for ecological modeling and ecosystem management. Once we have passed the BT-point the system transitions with high probability to a far-away equilibrium (see Figure \ref{fig:fig11}). In particular, the density of the prey population increases dramatically. In this scenario it will be very difficult to reverse the system to the original state as the region $Q_2$ of slow variable/parameters is very narrow near the BT-point. The most interesting aspect of the BT-transition in the Bazykin model \eqref{eq:Bazykin} is that just measuring the variances, without a preliminary normal form transformation, can be misleading. Measurement and fitting indicate a variance increase governed by $\cO_y^*(1/y)$ which could just indicate a supercritical Hopf transition from region $Q_2$ to $Q_3$ i.e. passing the (red) Hopf curve in Figure \ref{fig:fig10}. This transition would not be critical and can easily be reversed. The slower variance increase of the critical fold transition is \emph{hidden} near the BT-point!

\subsection{Biomechanics and Control near Instability}
\label{ssec:mechanics}

\begin{figure}[htbp]
\centering
\psfrag{A}{A}
\psfrag{B}{B}
\psfrag{LP}{LP}
\psfrag{BP}{BP}
\psfrag{th}{$\theta$}
\psfrag{Fs}{$\cF_s$}
 \includegraphics[width=1\textwidth]{./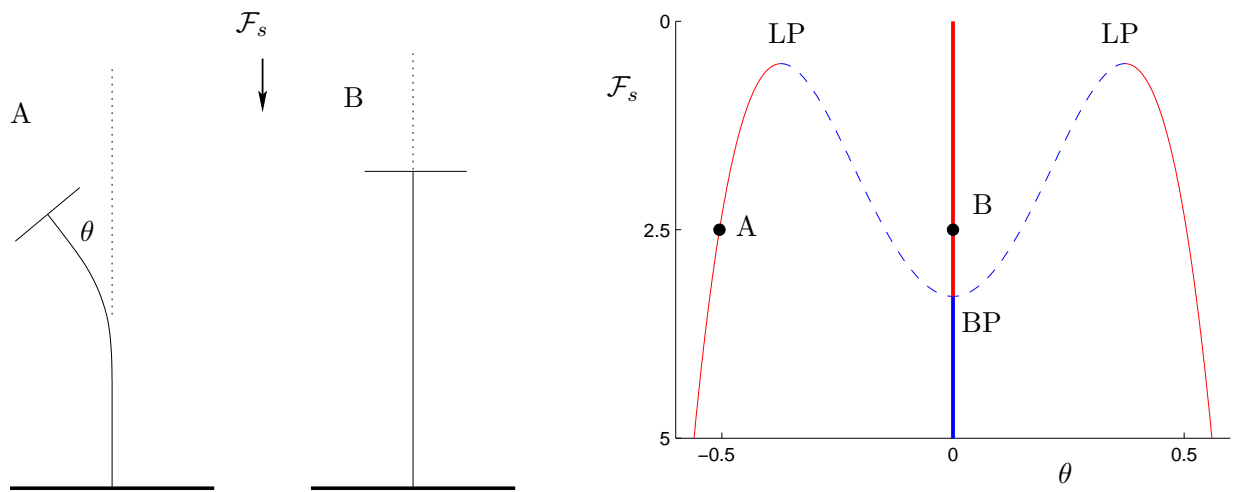}
\caption{\label{fig:fig13}Panels A and B show a sketch of the Euler buckling experiment as considered in \cite{VenkadesanGuckenheimerValero-Cuevas}. The force $\cF_s$ compresses the spring which should stay in the upright/vertical position as shown in $B$. The bifurcation diagram on the right shows the subcritical pitchfork \eqref{eq:pitch1} with parameter values \eqref{eq:pvals_pitch}. The pitchfork (branch point [BP]) from the attracting equilibrium branch (think red line) occurs at $F_s=3.3$. The unstable branches (dashed blue) undergo a further fold bifurcation (limit point [LP]). In $A$ we see what happens when the spring buckles and leaves the vertical position.}
\end{figure}

\begin{figure}[htbp]
\centering
\psfrag{a}{(a)}
\psfrag{b}{(b)}
\psfrag{c}{(c)}
\psfrag{d}{(d)}
\psfrag{x}{$x$}
\psfrag{y}{$y$}
\psfrag{V}{$V$}
 \includegraphics[width=1\textwidth]{./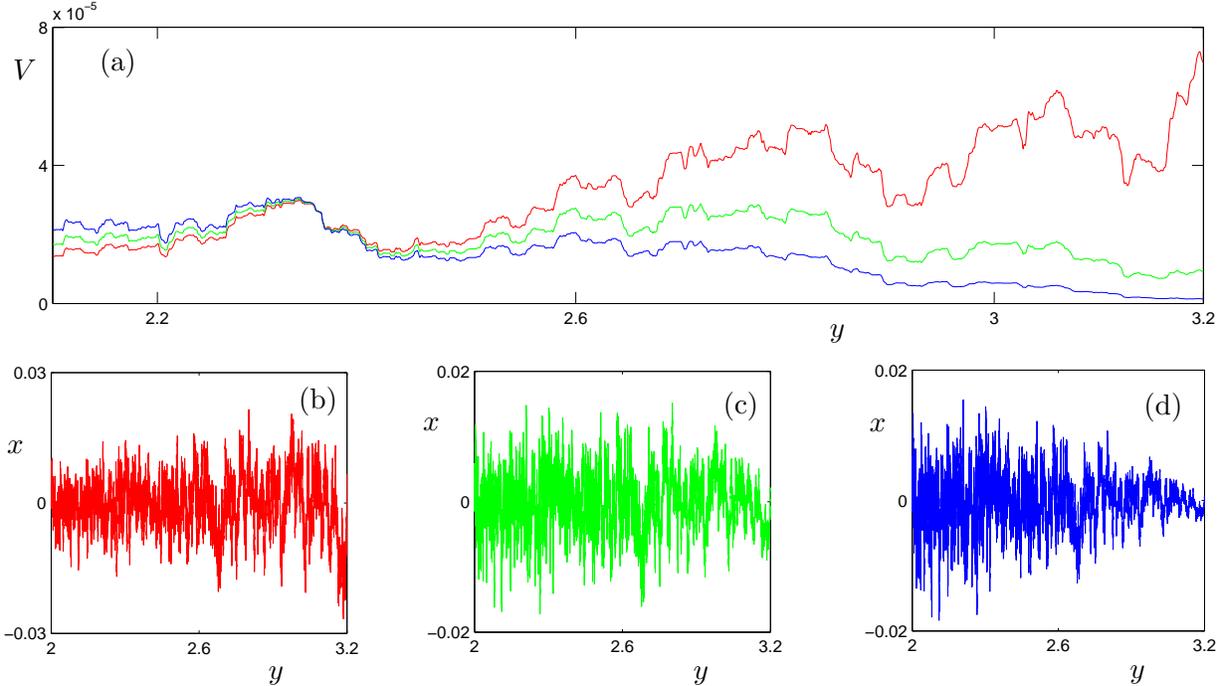}
\caption{\label{fig:fig14}(b)-(d) Sample paths for \eqref{eq:Madu} with $F(y)=1$ (red), $F(y)=\sqrt{y_c-y}$ (green), $F(y)=y_c-y$ (blue) and $g(x,y)=1$; fixed parameter values are given in \eqref{eq:pvals_pitch} and $(\epsilon,\sigma)=(0.005,0.01)$. The initial condition is $(x_0,y_0)=(0,2)$. The realization of the noise $W=W_s$ is the same for all three paths. In (a) we calculate the variance $V=\text{Var}(x(y))$ for each path using a sliding window technique. Note that we can already spot in the time series that variance increases for $F(y)=1$, stays roughly constant for $F(y)=\sqrt{y_c-y}$ and decays to zero for $F(y)=y_c-y$ as $y$ tends towards the pitchfork critical transition at $y_c=3.3$.}
\end{figure}

\begin{figure}[htbp]
\centering
\psfrag{F1}{$F=1$}
\psfrag{F2}{$F=\sqrt{y_c-y}$}
\psfrag{F3}{$F=y_c-y$}
\psfrag{y}{$y$}
\psfrag{V}{$V$}
 \includegraphics[width=1\textwidth]{./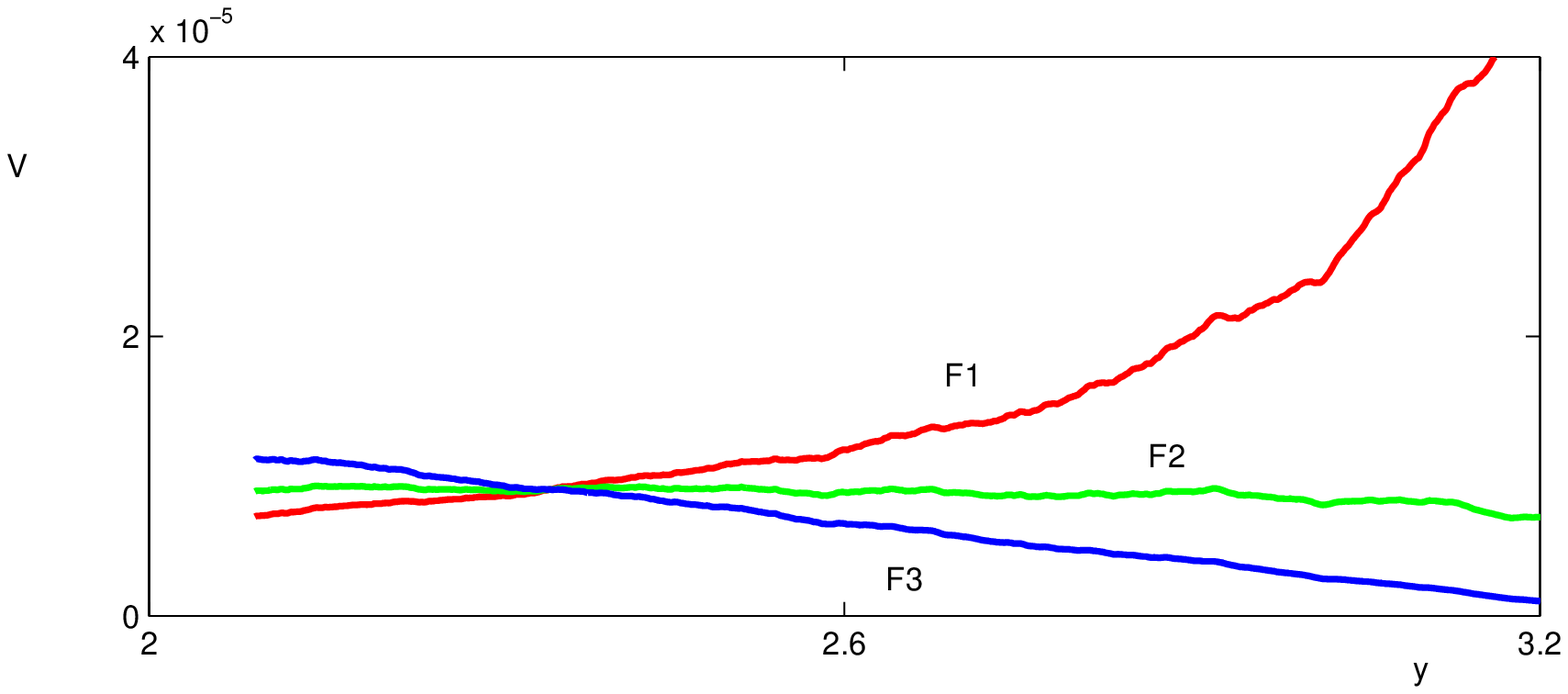}
\caption{\label{fig:fig15}Average variance $V=\text{Var}(x(y))$ for \eqref{eq:Madu} for $F(y)=1$ (red), $F(y)=\sqrt{y_c-y}$ (green), $F(y)=y_c-y$ (blue) and $g(x,y)=1$ over 100 sample paths; fixed parameter values are given in \eqref{eq:pvals_pitch} and $(\epsilon,\sigma)=(0.005,0.007)$. The initial condition is $(x_0,y_0)=(0,2)$. The point where the three variances cross corresponds approximately to $y^*=2.3$. This is expected since the three functions in \eqref{eq:pitch_noise} are equal at $y=y^*$.}
\end{figure}

In \cite{VenkadesanGuckenheimerValero-Cuevas} the authors investigate how humans control a spring near instability. The experimental setup asks participants to use their thumbs to compress the spring near the threshold of the classical \texttt{Euler buckling instability}; see Figure \ref{fig:fig13}. A mathematical model for this problem is provided by a subcritical pitchfork bifurcation with quintic non-linearity given by
\be
\label{eq:pitch1}
\theta'=p_1(\cF_s-p_2) \theta+p_3\theta^3-p_4\theta^5
\ee
where $\cF_s$, $p_j$ for $j\in\{1,2,3,4\}$ are parameters and $\theta$ represents the angle of the spring with respect to its vertical/upright position \cite{VenkadesanGuckenheimerValero-Cuevas}. The parameter $\cF_s$ is viewed as the force applied to the spring. The bifurcation diagram of \eqref{eq:pitch1} is shown in Figure \ref{fig:fig13}. To stay within the framework of \cite{VenkadesanGuckenheimerValero-Cuevas} we have chosen fixed parameter values
\be
\label{eq:pvals_pitch}
p_1=2.639, \qquad p_2=3.3, \qquad p_3=106.512, \qquad p_4=385. 
\ee
The experiment in \cite{VenkadesanGuckenheimerValero-Cuevas} asked participants to slowly compress the spring so that it does not buckle but also comes as close as possible to the pitchfork bifurcation. In Figure \ref{fig:fig13} this corresponds to moving along the stable equilibrium branch $\{(\theta,F_s)\in\R^2:F_s<3.3\}$. The experimental data do contain quite a bit of noise so that it is very reasonable to consider the system
\be
\label{eq:Madu}
\begin{array}{lcl}
dx&=&\frac1\epsilon\left[p_1(y-p_2) x+p_3x^3-p_4x^5\right]ds+\frac{\sigma}{\sqrt\epsilon}F(y)dW,\\
dy&=& 1~ds.
\end{array}
\ee
The deterministic critical manifold of \eqref{eq:Madu} is $C_0=\{(x,y)\in\R^2:p_1(y-p_2) x+p_3x^3-p_4x^5=0\}$. We focus on the trivial branch $C_0^*=\{x=0\}$ and the attracting subset $C_0^a:=C_0^*\cap \{y<3.3\}$. In the previous applications we usually assumed that $F(y)=const.$ which corresponds to additive noise. For the spring compression experiment this does not seem reasonable since participants could try to minimize the noisy fluctuations once they are very close the subcritical pitchfork bifurcation; in fact, they know that a noise-induced critical transition could occur before the bifurcation point. Figure \ref{fig:fig14}(b)-(d) shows sample paths for different types of noise
\be
\label{eq:pitch_noise}
F(y)=1,\qquad F(y)=\sqrt{y_c-y},\qquad F(y)=y_c-y.
\ee
We used the same realization for $dW$ for all three paths. It can already be observed that we have three different behaviors (``increase, constant, decay'') for the variance $V=\text{Var}(x(y))$. Figure \ref{fig:fig15} confirms this behavior as it shows the average variance over 100 sample paths for the different types of noise given in \eqref{eq:pitch_noise}. We can calculate from Theorem \ref{thm:CK1} that to leading order in the approach towards the pitchfork, but not in a small neighbourhood near it, we have the scaling laws
\benn
\text{Var}(x(y))=\cO_y^*\left(\frac{N(y)}{y_c-y}\right)=\cO_y^*\left(\frac{F^2(y)}{y_c-y}\right)=\left\{\begin{array}{lcl}
\cO_y^*\left(\frac{1}{y-y_c}\right) & & \text{if $F(y)=1$,}\\
\cO_y^*\left(1\right) & & \text{if $F(y)=\sqrt{y_c-y}$,}\\ 
\cO_y^*\left(y-y_c\right) & & \text{if $F(y)=y_c-y$.}\\  
\end{array}\right.
\eenn 
This explains precisely what is shown in Figure \ref{fig:fig15} and shows that multiplicative noise can yield a wide variety of different early-warning signals or even no visible trend of the variance near a critical transition. Hence we can conjecture that balancing/controlling objects near an instability involves suitable noisy perturbations and the quick processing of a time series history to generate the appropriate control.

\section{Discussion and Outlook}
\label{sec:conclusions}

This paper has only started to develop a mathematical framework for critical transitions and prediction. Here we briefly outline the main steps and how this framework can be extended to address future problems. 

The first part of this paper, motivated by Definition \ref{defn:ct}, only covers the singular limit $\epsilon=0$, $\sigma=0$. We derive slow flow conditions to reach a critical transition and record the relevant linearizations to develop stochastic scaling laws. Although this is the most precise starting point one could consider extensions. In fact, the sample paths viewpoint of Definition \ref{defn:ct} naturally extends. Let 
\benn
\gamma_{\epsilon,\sigma}=\gamma_{\epsilon,\sigma}(t):[0,T]\ra \R^{m+n},\qquad \gamma_{\epsilon,\sigma}(0)=\gamma(0)=(x(0),y(0))
\eenn
be a sample path of \eqref{eq:gen_SDE}. The first deterministic extension is to consider $\gamma_{0,0}$ but remove the requirement from Definition \ref{defn:ct} that the transition point $p$ is normally hyperbolic and to change (C1) so that a candidate $\gamma_{0,0}(t_{j-1},t_j)$ can lie in any part of the critical manifold. This allows for canard orbits and delay as shown in Figure \ref{fig:fig4}(b). As an example consider the pitchfork bifurcation \eqref{eq:nf_pitchfork} with $y(0)<0$ then the point $(x,y)=(0,\min(-y(0),y_b))$ becomes a critical transition where $y_b>0$ is the buffer point \cite{Neishtadt1,Neishtadt2}. For delays and canards the problem of critical transitions becomes \emph{global} in at least \emph{two} ways:

\begin{itemize}
 \item[(G1)] The initial condition matters to determine which points are critical transitions.
 \item[(G2)] The global distance between critical manifolds becomes relevant. 
\end{itemize}

To understand (G2) consider the supercritical pitchfork bifurcation \eqref{eq:nf_pitchfork}. Depending on the initial condition there may be a jump at $p=(x_p,y_p)$ for $0<y_p\ll1$ or $0<y_p=1$. The length of the fast segment to the next attracting critical manifold $y=x^2$ from $p$ is $\sqrt{y_p}$; see Figure \ref{fig:fig4}. Applications clearly \emph{require} a case distinction between $\sqrt{y_p}\ll1$ which is usually not viewed as critical and $\sqrt{y_p}=1$ which should probably be called critical. Hence an extension to canards must \emph{append} a global distance measure to the sample path space, {e.g.}~the minimum or maximum distance from the transition point to the fast subsystem attractor; see Figure \ref{fig:fig4}(b) where canards with or without head usually yield two different distances. Since this paper entirely restricts to a \emph{local} theory we do not discuss this aspect further.

\begin{figure}[htbp]
\centering
\psfrag{x}{$x$}
\psfrag{xl}{\scriptsize{$x$}}
\psfrag{y}{$y$}
\psfrag{g1}{$\gamma_{\epsilon,0}$}
\psfrag{g2}{$\gamma_{\epsilon,\sigma}$}
\psfrag{g3}{$\gamma_{0,0}$}
\psfrag{g4}{$\gamma_{0,\sigma}$}
\psfrag{a}{(a)}
\psfrag{b}{(b)}
\psfrag{C0}{$C_0$}
 \includegraphics[width=0.9\textwidth]{./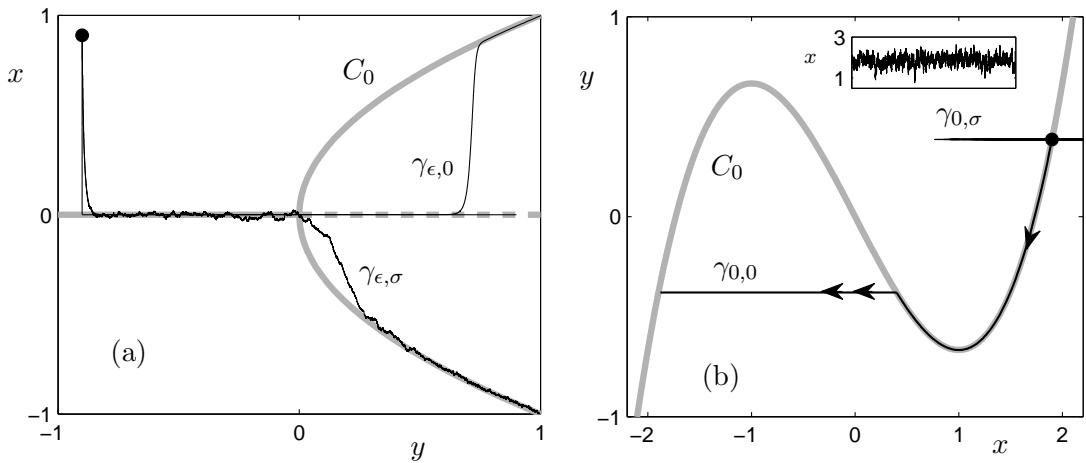}
\caption{\label{fig:fig4}Illustration of possible extensions to Definition \ref{defn:ct}. (a) Phase space for \eqref{eq:gen_SDE} with $f(x,y)=yx-x^3$, $F(x,y)=1$ and $g(x,y)=1$. The critical manifold $C_0$ (grey) and two sample paths $\gamma_{\epsilon,0}$ and $\gamma_{\epsilon,\sigma}$ (black) for $\epsilon=0.01=\sigma$ with initial condition $(x(0),y(0))=(0.9,-0.9)$ are shown. (b) Phase space for \eqref{eq:gen_SDE} with $f(x,y)=y-\frac{x^3}{3}-x$, $g(x,y)=1-x$ and $F(x,y)=1$ with a non-generic fold at $(x,y)=(1,-2/3)$. Again we show two sample paths (black) and the critical manifold (grey). Note that the path $\gamma_{0,\sigma}$ for $\sigma=0.5$ cannot drift in $y$ but will switch, on exponentially long time scales, between the two attracting branches of the critical manifold. The inset shows a time series for this path on a subexponential time scale.}
\end{figure}

Another possible extension is to consider sample paths $\gamma_{\epsilon,0}$ for $0<\epsilon\ll1$; see Figure \ref{fig:fig4}(a). In this case, the extension can just be defined by requiring that $d_H(\gamma_{\epsilon,0},\gamma_{0,0})\ra 0$ as $\epsilon\ra0$ {i.e.}~by checking whether candidates that have a critical transition in the singular limit perturb. The perturbation results are known for the fold, pitchfork, transcritical and Hopf bifurcations \cite{KruSzm1,KruSzm3,KruSzm4,Neishtadt1}. Partial results are  available for the Bogdanov-Takens bifurcation \cite{Chiba1} and the cusp \cite{BroerKaperKrupa} is work in progress; the remaining codimension-two problems are expected to be solvable with similar ideas. One could also add generic cases for higher-dimensional and non-minimal slow variables such as folded singularities in $\R^3$ \cite{SzmolyanWechselberger1}.

The case $\gamma_{\epsilon,0}$ is primarily of mathematical interest since for $\gamma_{\epsilon,\sigma}$ and $\sigma>0$ a delay/canard effect is shortened substanially by noise in applications (see {e.g.}~Theorem 2.11 of \cite{BerglundGentz6}) as long as the noise is not exponentially small \cite{Sowers}. A typical delay time is of order $\sqrt{\epsilon|\ln\sigma|}$ so the local singular limit results are relevant; see also Figure \ref{fig:fig4}(a). However, there is a major open issue for applications we do not address here corresponding to transitions driven purely by noise or a combination of noise and bifurcations. 

The case $\gamma_{0,\sigma}$ for a sample path starting near an attracting critical manifold $C^{a1}_0$ is covered by the theory of large deviations \cite{FreidlinWentzell} and purely noise-induced transitions can occur to an attracting critical manifold $C^{a2}_0$; see Figure \ref{fig:fig4}(b) where the upper path will eventually escape. Again, the sample path viewpoint is well-suited as we ask for an estimate of probabilites {e.g.}
\be
\label{eq:prob_est}
\P\left(\left[\inf_{[0,T]}t:d_h(\gamma_{0,\sigma}(t),C^{a2}_0)<\delta_2,d_H(\gamma_{0,\sigma}(0),C^{a1}_0)<\delta_1\right]>t^*\right)
\ee
for suitable small constants $\delta_{1,2}$ and a given time $t^*>0$. Hence one can again use paths and Definition \ref{defn:ct} as a basis but then has to add for each point on $C^{a1}_0$ a probabilistic description how likely the escape is which usually yields exponentially long time scales to escape. This is again a \emph{global} problem. For cases with one fast variable and $\epsilon=0$ it is often possible to obtain explicit solutions using Fokker-Planck equations {e.g.}~see \cite{Gardiner,ArnoldSDE,LindnerSchimansky-Geier,ThompsonSieber2,KuehnCT1}.\\

\textit{Remark:} After the suggestion of the Definition \ref{defn:ct} in \cite{KuehnCT1}, recent work of Ashwin et {al.}~\cite{AshwinWieczorekVitoloCox} suggested a related applied classification of critical transitions distinguishing between B-tipping ('bifurcation-induced'), N-tipping ('noise-induced') and R-tipping ('rate-induced'). Basically B-tipping aims to cover paths $\gamma_{\epsilon,0}$ for $\epsilon\ra 0$ and N-tipping considers paths $\gamma_{0,\sigma}$; it is currently work in progress to understand R-tipping better.\\  

The most general case is to consider $\gamma_{\epsilon,\sigma}$ for $\sigma,\epsilon>0$ where noise-induced escape \emph{shortly before} a fast subsystem bifurcation point on \emph{non-exponential time scales} becomes relevant. One of the key goals of the mathematical framework presented in this paper was to also allow for a natural extension of the methods and definitions to this case. It is future work to combine the ideas from Definition \ref{defn:ct} by adding to it pathwise probability estimates of the form \eqref{eq:prob_est}. This should yield the full mathematical framework based upon sample paths with all parameters: $\sigma>0$, $\epsilon>0$, distance to the next attractor and escape probability during $[0,T]$. 

For the local theory of codimension-one bifurcations several studies on various regimes with $\sigma,\epsilon>0$ near bifurcation points exist. Overall, the fold \cite{BerglundGentz8,Sowers}, pitchfork/transcritical \cite{Kuske,BerglundGentz6} and Hopf bifurcations \cite{BerglundGentz2,SuRubinTerman,BerglundGentz3} are quite well understood. One basic insight is that scaling regimes are identified under which noise-induced effects or deteterministic drift dominate. Another important conclusion are probabilistic estimates for certain distinct dynamical regimes to occur. For higher codimension phenomena not many results are known but see {e.g.}~\cite{BerglundGentzKuehn,BerglundLandon}. As far as the stochastic scaling laws for codimension-two cases considered in this paper are concerned there does not seem to be any work prior to this paper in this direction.\\

The second contribution of this paper is to understand fluctuations and scaling laws of paths better before fast subsystem bifurcations to determine early-warning signs. In particular, leading-order scaling behaviour for covariance matrices have been derived. We have only covered the basic case of local bifurcations up to codimension-two with white noise in the region (R1) with a suitable scaling of noise and time scale separation which makes early escapes unlikely. Large fluctuations before the bifurcation and scaling results near bifurcations are certainly not well-studied for all bifurcations up to codimension two. Early-warning signs for other types of noise (colored noise, shot/burst noise \cite{Gardiner}), for degenerate noise terms \cite{TouboulWainrib} and for more general stochastic processes ({e.g.}~L{\'{e}}vy Processes \cite{Kallenberg,ImkellerPavlyukevich}) are interesting directions. As before, sample paths and singular limits are still available, even for very general high-dimension bifurcations and stochastic processes.

Global bifurcations \cite{Kuznetsov} have not been considered and would be an interesting direction for future analysis. There is work in progress to understand these bifurcations and their warning signs in models as well as in a normal form setup. Another possible extension are early-warning signs for spatially-extended problems; see \cite{Dakosetal1,DonangeloFortDakosSchefferNes,Dakosetal2} for models from ecology. In this context, it is well-known that many classes of pattern-forming partial differential equations (PDEs) and stochastic partial differential equations (SPDEs) can be written as evolution equations with well-defined paths or stochastic sample paths \cite{Henry,DaPratoZabczyk}. Several relevant PDEs, such as excitable systems \cite{Nagumo,Barkley2} with diffusion, are often already in a natural fast-slow form. Presumably one should find many other interesting early-warning signs for spatial systems but these could also be more difficult to measure and apply in practical applications since the collection and analysis of much larger data sets arises; a typical area where this already proved to be very difficult are epileptic seizures \cite{MormannAndzejakElgerLehnertz,MeiselKuehn}.\\

The third contribution of the current work are examples, several of them in application domains (epidemics, systems biology and biomechanics) where the new techniques for early-warning signs have not been considered. Furthermore, the examples provide illustrations of the theory and also show its limitations where prediction becomes impossible or misleading if one relies on the scaling of the variance. There are many important directions for making the theory more applicable {e.g.}~detailed statistical tests such as receiver-operator curves \cite{HallerbergKantz,KuehnZschalerGross,BoettingerHastings}, analysis of limited data and its interpretation \cite{DitlevsenJohnsen}, linking critical transitions to experiments \cite{DrakeGriffen,Veraartetal}, desirable tipping points in applications and their control \cite{Jensen,KuehnNetworks} as well as networks and deterministic metastability \cite{KuehnZschalerGross}.\\       
     
\textbf{Acknowledgments:} I would like to thank Martin Zumsande for suggesting the model from systems biology in Section \ref{ssec:SysBio} and Thilo Gross for insightful discussions about network dynamics. I also would like to thank two anonymous referees and the editor for many helpful comments that helped to improve the manuscript. Part of this work was supported by the European Commission (EC/REA) via a Marie-Curie International Re-integration Grant.

\small{
}

\end{document}